\documentclass[11pt]{article}

\usepackage[T1]{fontenc}
\usepackage[utf8]{inputenc}
\usepackage[margin=1in]{geometry}
\usepackage{setspace}
\IfFileExists{newtxtext.sty}{\usepackage{newtxtext}\usepackage{newtxmath}}{}

\usepackage{amsmath}
\usepackage{amssymb}
\usepackage{amsfonts}

\usepackage{amsthm}
\usepackage{mathtools}
\usepackage{bm}
\IfFileExists{bbm.sty}{\usepackage{bbm}}{}

\usepackage{graphicx}
\usepackage{subcaption}
\usepackage{booktabs}
\usepackage{tabularx}
\usepackage{multirow}
\usepackage{array}

\usepackage{enumitem}
\IfFileExists{algorithm.sty}{\usepackage{algorithm}\usepackage{algorithmic}}{}

\usepackage{tikz}
\usetikzlibrary{arrows.meta, positioning, shadows, fit, backgrounds, calc, decorations.markings, patterns}

\usepackage{authblk}

\usepackage[round,compress]{natbib}
\usepackage[colorlinks=true,allcolors=green!60!black]{hyperref}

\usepackage[framemethod=TikZ]{mdframed}
\usepackage{needspace}
\mdfdefinestyle{principleframe}{%
  linewidth=0.6pt,
  linecolor=black,
  backgroundcolor=white,
  innertopmargin=8pt,
  innerbottommargin=8pt,
  innerleftmargin=10pt,
  innerrightmargin=10pt,
  skipabove=\medskipamount,
  skipbelow=\medskipamount,
  frametitlefont=\bfseries,
  theoremseparator={.\ },
  nobreak=true,
}

\newcommand{\defeq}{\mathrel{\mathop:}=}

\newcommand{\E}{\mathbb{E}}

\newcommand{\KL}{\mathrm{KL}}

\newcommand{\R}{\mathbb{R}}
\newcommand{\RR}{\mathbb{R}}

\newcommand{\calX}{\mathcal{X}}
\newcommand{\calA}{\mathcal{A}}
\newcommand{\calF}{\mathcal{F}}
\newcommand{\calP}{\mathcal{P}}

\newcommand{\calN}{\mathcal{N}}

\newcommand{\rhotail}{\rho}  

\newcommand{\diff}{\,\mathrm{d}}
\newcommand{\1}{\mathbf{1}}
\newcommand{\PP}{\mathbb{P}}
\newcommand{\EE}{\mathbb{E}}

\theoremstyle{plain}
\newtheorem{theorem}{Theorem}[section]
\newtheorem{proposition}[theorem]{Proposition}
\newtheorem{lemma}[theorem]{Lemma}
\newtheorem{corollary}[theorem]{Corollary}

\theoremstyle{definition}
\newtheorem{definition}[theorem]{Definition}
\newtheorem{assumption}[theorem]{Assumption}

\newtheorem{principle}[theorem]{Principle}

\theoremstyle{remark}
\newtheorem{remark}[theorem]{Remark}

\title{\textbf{Bayes Risk for Goodness of Fit Tests}}
\author[2]{Nicholas G. Polson}
\author[3]{Vadim Sokolov}
\author[4]{Daniel Zantedeschi}

\affil[2]{Booth School of Business, University of Chicago, Chicago, IL 60637\\
\texttt{ngp@chicagobooth.edu}}
\affil[3]{Volgenau School of Engineering, George Mason University, Fairfax, VA 22030\\
\texttt{vsokolov@gmu.edu}}
\affil[4]{School of Information Systems, Muma College of Business, University of South Florida, Tampa, FL 33620\\
\texttt{danielz@usf.edu}}
\date{February 2026}

\begin{document}
\maketitle

\begin{abstract}
We develop a unified framework for goodness-of-fit (GOF) testing through the lens of Bayes risk. Classical GOF procedures are commonly calibrated either at fixed significance level (CLT scale) or through exponential error exponents (LDP scale). We establish that Bayes-risk optimal calibration operates on the moderate-deviation (MDP) scale, producing canonical $\sqrt{\log n}$ inflation of rejection thresholds and polynomially decaying Type~I error.

Our main contributions are:
(i)~we formalise the Rubin--Sethuraman program for KS-type statistics as a risk-calibration theorem with explicit regularity conditions on priors and empirical-process functionals;
(ii)~we develop the precise connection between Bayes-risk expansions and Sanov information asymptotics, showing how $\log n$-order truncations arise naturally when risk, rather than pure exponents, is the evaluation criterion;
(iii)~we provide detailed applications to location testing under Laplace families, shape testing via Bayes factors, and connections to Fisher information geometry.
The organizing principle throughout is that sample size enters Bayes-optimal GOF cutoffs through the MDP scale, unifying KS-based and Sanov-based perspectives under a single risk criterion.
\end{abstract}

\noindent\textbf{Keywords:} Bayes risk; goodness-of-fit; moderate deviations; Kolmogorov--Smirnov; Sanov theorem; Bayes factors; Fisher information.

\newpage

\section{Introduction}
\label{sec:intro}

Goodness-of-fit (GOF) testing occupies a central position in statistical methodology, serving as the primary mechanism for model criticism, specification search, and validation of probabilistic assumptions.
Classical GOF procedures, including Kolmogorov--Smirnov (KS) statistics, likelihood-ratio tests, chi-squared tests, and entropy-based criteria, have been studied extensively under two dominant asymptotic frameworks:
\emph{fixed-$\alpha$ calibration} (CLT scale), where critical values control Type~I error at a constant level, and
\emph{exponential error exponents} (LDP scale), where performance is characterised by large-deviation rate functions and Chernoff bounds \citep{Chernoff1952, DemboZeitouni1998}.

Neither framework is appropriate when the inferential objective is \emph{Bayes risk minimisation} \citep[cf.][]{Berger1985, LeCam1986}.
Fixed-$\alpha$ calibration ignores the fact that, as sample size grows, an increasing number of alternatives become statistically distinguishable and must be jointly serviced.
Pure large-deviation calibration enforces overly stringent rejection thresholds that sacrifice power against local and moderate alternatives, leading to suboptimal Bayesian risk even when Type~I error is extremely small.

The central message of this paper is that Bayesian calibration of GOF tests naturally operates on the \emph{moderate-deviation} (MDP) scale.
This intermediate regime, lying strictly between CLT and LDP scales, arises because the number of alternatives that a sample of size~$n$ can distinguish from the null grows polynomially, and a Bayes-risk criterion must service all of them.
Concretely, at sample size~$n$ the experiment resolves distributions at Fisher distance $\sqrt{\log n/n}$ from the null; any prior that is absolutely continuous near $\calF_0$ places polynomial mass in this shell.
Balancing false rejections against the cumulative missed-detection cost over this growing set of distinguishable alternatives forces thresholds to inflate as $\sqrt{\log n}$, Type~I error to decay polynomially, and Bayes risk to concentrate on the boundary shell.

The information-theoretic lineage of this perspective is extensive.
\citet{Sanov1957} established that empirical measures satisfy a large-deviation principle with Kullback--Leibler rate function.
\citet{Bahadur1960} and \citet{BahadurRao1960} showed that polynomial prefactors in tail probabilities affect statistical efficiency at moderate scales.
\citet{RubinSethuraman1965a, RubinSethuraman1965b} analysed Bayes risk for empirical-process statistics and derived moderate-deviation optimality for KS-type procedures.
\citet{Good1955} emphasised weight-of-evidence interpretations of log-likelihood ratios, while Pearson's $\chi^2$ test and entropy-deficit criteria \citep{Jaynes1957} were later shown to share identical large-deviation rate functions.
What has been missing is a \emph{single generative mechanism} that explains why these distinct threads converge at the MDP scale, and a template (Lemma~\ref{lem:risk-template}) that makes each instantiation a two-line corollary.

Our main contributions are as follows.
We state an explicit \emph{Bayes-Risk MDP Principle} (Section~\ref{sec:setup}) and prove it as a theorem under regularity conditions that cover KS-type statistics, Sanov-based tests, and standard parametric GOF problems.
We formalise the Rubin--Sethuraman program for KS statistics (Section~\ref{sec:ks}) as a risk-calibration theorem with sharp constants, showing that the Bayes-optimal threshold satisfies $t_n^* = \sqrt{(\kappa/4)\log n} + O(\sqrt{\log\log n})$.
We develop the connection between Bayes risk and Sanov information asymptotics (Section~\ref{sec:sanov}), showing that the effective KL exponent is truncated to $O(\log n / n)$ under Bayes-risk optimisation, and providing the information-theoretic interpretation of MDP scaling.
Finally, we apply the theory to location testing under Laplace families, shape testing via Bayes factors, multinomial GOF, and Fisher information geometry (Sections~\ref{sec:apps}--\ref{sec:fisher}).
Rubin--Sethuraman identify the MDP calibration in the KS setting; our contribution is to isolate the underlying $(\rhotail,\kappa)$ mechanism and show it applies uniformly across GOF statistics, including empirical-process, KL/Sanov, multinomial, and parametric geometric instances.

Recent work \citep{DPSZ2026} develops a unified theory of Bayesian hypothesis testing via moderate deviation asymptotics, focusing on Bayes factors and integrated risk expansions.
While both approaches identify the moderate deviation regime as central for Bayesian calibration, the present paper isolates a distinct mechanism: a generic risk decomposition template based solely on (i)~Gaussian-type null tails and (ii)~local prior mass exponents.
Our results do not depend on Bayes factors or likelihood-ratio structure per se.
Instead, Theorem~\ref{thm:mdp-generic} and Lemma~\ref{lem:risk-template} provide a reusable calibration principle applicable to a broad class of statistics, including goodness-of-fit tests, empirical processes, and divergence-based procedures.
In this sense, the moderate deviation scaling emerges here as a structural consequence of risk balancing, rather than as a property specific to Bayesian hypothesis testing.
Our references to Dawid's decomposition and prequential results follow the published accounts in \citet{Dawid2011PosteriorModelProbabilities,Dawid1992Prequential}.

The paper is organised as follows.
Section~\ref{sec:setup} formalises the GOF testing problem, defines Bayes risk, and states the organizing principle.
Section~\ref{sec:ks} establishes Bayes-risk optimality of MDP calibration for KS-type statistics.
Section~\ref{sec:sanov} develops the Sanov/information-asymptotic perspective.
Section~\ref{sec:apps} gives applications.
Section~\ref{sec:fisher} connects to Fisher information geometry.
Section~\ref{sec:numerics} provides numerical verification of the calibration template.
Section~\ref{sec:discussion} discusses implications and extensions.
Appendix~\ref{app:triangulation} provides a compact triangulation linking Bayes factors (Good), large deviations (Hoeffding), entropy geometry (Jaynes), and Wilks' chi-square limit through KL curvature, clarifying the geometric foundation of the moderate-deviation boundary derived here.

\section{Setup and the Bayes-Risk GOF Principle}
\label{sec:setup}

\subsection{The Testing Problem}

Let $X_1,\ldots,X_n$ be i.i.d.\ with distribution $F$ on a measurable space $(\calX,\calA)$. The goodness-of-fit testing problem is
\begin{equation}\label{eq:testing-problem}
H_0: F \in \calF_0
\qquad \text{vs.} \qquad
H_1: F \in \calF_1,
\end{equation}
where $\calF_0$ and $\calF_1$ are disjoint classes of distributions.
We allow several standard configurations: a simple null $\calF_0=\{F_0\}$; a parametric null $\calF_0=\{F_\theta:\theta\in\Theta_0\}$ \citep[see][for consistency under growing parameter spaces]{KieferWolfowitz1956}; and composite or nonparametric alternatives.
A (possibly randomized) test is a measurable map $\delta_n:\calX^n\to[0,1]$, where $\delta_n(x^n)$ is the probability of rejecting $H_0$ given data $x^n$.

\subsection{Empirical Measures and GOF Statistics}\label{subsec:empirical}

Many GOF procedures are functionals of the empirical measure
\begin{equation}\label{eq:empirical-measure}
\hat{\pi}_n \;\defeq\; \frac{1}{n}\sum_{i=1}^n \delta_{X_i},
\end{equation}
or equivalently the empirical distribution function $F_n(t) = n^{-1}\sum_{i=1}^n \1\{X_i\le t\}$.
GOF statistics can be written as $T_n=\mathsf{T}(\hat{\pi}_n;F_0)$ for a discrepancy functional $\mathsf{T}$.
Representative examples include the Kolmogorov--Smirnov statistic $S_n=\sup_t |F_n(t)-F_0(t)|$, Cram\'er--von Mises, Anderson--Darling, and likelihood-ratio statistics.

\subsection{Bayes Risk Formulation}

We embed GOF testing in Bayesian decision theory by placing priors on $\calF_0$ and $\calF_1$ and assigning costs to Type~I and Type~II errors.

\begin{definition}[Bayes risk for testing]\label{def:bayes-risk}
Let $\pi_0,\pi_1>0$ with $\pi_0+\pi_1=1$ denote prior probabilities on the hypotheses.
Let $\Pi_0$ be a probability measure on $\calF_0$ and $\Pi_1$ a probability measure on $\calF_1$.
Let $L_0,L_1>0$ denote the losses for Type~I and Type~II errors.
The \emph{Bayes risk} of a test $\delta_n$ is
\begin{equation}\label{eq:bayes-risk}
B_n(\delta_n)
=
\pi_0 L_0\, \E_{F\sim \Pi_0}\!\big[\E_F[\delta_n(X^n)]\big]
\;+\;
\pi_1 L_1\, \E_{F\sim \Pi_1}\!\big[\E_F[1-\delta_n(X^n)]\big].
\end{equation}
\end{definition}

\begin{remark}[Error probabilities and averaged errors]\label{rem:avg-errors}
For any $F$, define the rejection probability $\alpha_n(F)\defeq\E_F[\delta_n(X^n)]$.
The prior-averaged errors are $\bar\alpha_n \defeq \E_{F\sim\Pi_0}[\alpha_n(F)]$ and $\bar\beta_n \defeq \E_{F\sim\Pi_1}[1-\alpha_n(F)]$,
so that $B_n(\delta_n)=\pi_0 L_0\,\bar\alpha_n+\pi_1 L_1\,\bar\beta_n$.
Bayes calibration balances Type~I and Type~II errors after averaging over the null and alternative classes.
\end{remark}

\subsection{Reduction to a Likelihood-Ratio Form}\label{subsec:lr-reduction}

Define the prior predictive (mixture) measures on $\calX^n$:
\[
M_0^{(n)}(\cdot)\;\defeq\;\int P_F^{\otimes n}(\cdot)\,\Pi_0(dF),
\qquad
M_1^{(n)}(\cdot)\;\defeq\;\int P_F^{\otimes n}(\cdot)\,\Pi_1(dF).
\]

\begin{proposition}[Bayes-optimal test]\label{prop:bayes-np}
Assume $M_0^{(n)}$ and $M_1^{(n)}$ are mutually absolutely continuous.
Then any Bayes-risk minimiser can be taken to be a threshold rule in the predictive likelihood ratio $\Lambda_n(X^n)\defeq dM_1^{(n)}/dM_0^{(n)}(X^n)$:
\begin{equation}\label{eq:bayes-optimal-test}
\delta_n^*(X^n)
=
\1\!\left\{\Lambda_n(X^n)>\tau\right\}
\;+\;
\gamma\,\1\!\left\{\Lambda_n(X^n)=\tau\right\},
\qquad
\tau\;\defeq\;\frac{\pi_0 L_0}{\pi_1 L_1},
\end{equation}
for some $\gamma\in[0,1]$.
\end{proposition}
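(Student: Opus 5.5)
We first rewrite the Bayes risk \eqref{eq:bayes-risk} as a single integral against one dominating measure on $\calX^n$. By Fubini/Tonelli (the integrand $\delta_n\in[0,1]$ is bounded and measurable, and $F\mapsto P_F^{\otimes n}(A)$ is assumed measurable so that the mixtures are well defined), we have
\[
\E_{F\sim\Pi_0}\big[\E_F[\delta_n(X^n)]\big]=\int \delta_n\,dM_0^{(n)},
\qquad
\E_{F\sim\Pi_1}\big[\E_F[1-\delta_n(X^n)]\big]=\int (1-\delta_n)\,dM_1^{(n)}.
\]
Using mutual absolute continuity, we write $dM_1^{(n)}=\Lambda_n\,dM_0^{(n)}$ with $\Lambda_n\in(0,\infty)$ $M_0^{(n)}$-a.s. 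This gives
\[
B_n(\delta_n)=\pi_1L_1+\int \delta_n(x^n)\,\big(\pi_0L_0-\pi_1L_1\Lambda_n(x^n)\big)\,M_0^{(n)}(dx^n).
\]

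The second step is pointwise minimisation. For each $x^n$ the integrand is affine in $\delta_n(x^n)\in[0,1]$. It is therefore minimised by setting $\delta_n=1$ where $\pi_1L_1\Lambda_n>\pi_0L_0$, i.e.\ where $\Lambda_n>\tau$, and $\delta_n=0$ where $\Lambda_n<\tau$. On $\{\Lambda_n=\tau\}$ the integrand vanishes, so any value is allowed, in particular a constant $\gamma$. Hence $\delta_n^*$ in \eqref{eq:bayes-optimal-test} attains the minimal risk $\pi_1L_1-\int(\pi_1L_1\Lambda_n-\pi_0L_0)_+\,dM_0^{(n)}$, and this minimal risk does not depend on the choice of $\gamma$.

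The third step establishes the ``any minimiser can be taken to be'' clause. Let $\delta$ be any test. Then $B_n(\delta)-B_n(\delta_n^*)=\int (\delta-\delta^*_n)(\pi_0L_0-\pi_1L_1\Lambda_n)\,dM_0^{(n)}$. The integrand is nonnegative pointwise, since off the tie set $\delta^*_n$ is the pointwise minimiser. So if $\delta$ is also Bayes, the integrand vanishes $M_0^{(n)}$-a.s. Because the weight $\pi_0L_0-\pi_1L_1\Lambda_n$ is nonzero off $\{\Lambda_n=\tau\}$, we get $\delta=\delta_n^*$ a.s.\ off the tie set. On the tie set $\delta$ is arbitrary, and replacing it there by a constant $\gamma$ leaves the risk unchanged. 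Thus every Bayes rule agrees a.s.\ (under both mixtures, by equivalence) with a rule of the stated form, up to modification on the tie set.

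The main obstacle is not analytic but measure-theoretic bookkeeping. We need the kernel $F\mapsto P_F^{\otimes n}$ to be measurable so that Fubini applies and $M_j^{(n)}$ are genuine probability measures. We also need to note that ``a.s.'' statements transfer between $M_0^{(n)}$ and $M_1^{(n)}$; mutual absolute continuity is exactly what guarantees this. If $\gamma$ is required to be constant rather than data-dependent on the tie set, we rely on the remark that the risk is flat there, so the constant choice loses nothing.
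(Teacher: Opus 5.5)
Your proof is correct and is essentially the paper's argument: Fubini, then pointwise minimisation of an integrand that is affine in $\delta_n(x^n)$. You take densities with respect to $M_0^{(n)}$ instead of passing through posterior probabilities and Bayes' rule, which also lets you prove the ``any minimiser'' clause (a.s.\ agreement off the tie set) that the paper leaves implicit. Your bookkeeping of the prior weights is also cleaner. The paper's intermediate criterion $\pi_1L_1\,\PP(F\in\calF_1\mid x^n)>\pi_0L_0\,\PP(F\in\calF_0\mid x^n)$ counts each $\pi_i$ twice once the posteriors are expanded, whereas your weight $\pi_0L_0-\pi_1L_1\Lambda_n$ yields $\tau=\pi_0L_0/(\pi_1L_1)$ directly.
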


\begin{proof}
The data-then-posterior form \eqref{eq:fubini2} shows $B_n(\delta_n)=\int\bigl[\int L(F,\delta_n)\,\Pi(dF\mid x^n)\bigr]\,m(x^n)\,dx^n$.
For 0--1 loss with costs $L_0,L_1$, the inner integral is minimised pointwise by choosing $\delta_n(x^n)=1$ iff $\pi_1 L_1\,\PP(F\in\calF_1\mid x^n)>\pi_0 L_0\,\PP(F\in\calF_0\mid x^n)$.
By Bayes' rule, $\PP(F\in\calF_i\mid x^n)\propto \pi_i\,dM_i^{(n)}/dx^n$, so the decision reduces to $dM_1^{(n)}/dM_0^{(n)}>\pi_0 L_0/(\pi_1 L_1)$.
This is the classical Bayesian binary hypothesis testing result \citep[Ch.~4]{Berger1985}.
\end{proof}

\subsection{Fubini Decompositions}\label{subsec:fubini}

A key structural device is the ability to rewrite Bayes risk by swapping the order of integration between the data space and the model space.
Let $\Pi=\pi_0\Pi_0+\pi_1\Pi_1$ denote the mixture prior, $p(x^n\mid F)$ a density version, and $m(x^n) \defeq \int p(x^n\mid F)\,\Pi(dF)$ the marginal.

The \emph{prior-then-data form} writes Bayes risk as an expectation of loss under the joint:
\begin{equation}\label{eq:fubini1}
B_n(\delta_n)
=
\int_{\calF_0\cup\calF_1}
\int_{\calX^n}
L\!\big(F,\delta_n(x^n)\big)\, p(x^n\mid F)\, dx^n\, \Pi(dF).
\end{equation}
By Fubini, the \emph{data-then-posterior form} is:
\begin{equation}\label{eq:fubini2}
B_n(\delta_n)
=
\int_{\calX^n}
\left[
\int_{\calF_0\cup\calF_1}
L\!\big(F,\delta_n(x^n)\big)\,\Pi(dF\mid x^n)
\right]
m(x^n)\, dx^n.
\end{equation}
Equation~\eqref{eq:fubini1} is useful for understanding how the alternative space contributes to risk; \eqref{eq:fubini2} is the operational form underlying the proof of Proposition~\ref{prop:bayes-np}.

\subsection{Three Calibration Regimes}\label{subsec:three-regimes}

Calibration of a GOF test amounts to choosing a rejection threshold $t_n$ for a statistic $T_n$.
Write $\alpha_n \asymp \PP_{F_0}(T_n>t_n)$.

\begin{definition}[Calibration regimes]\label{def:regimes}
The canonical regimes are:
the \emph{CLT regime}, where $\alpha_n=\alpha\in(0,1)$ is fixed and $t_n=O(1)$ for standardised statistics;
the \emph{MDP regime}, where $\alpha_n=n^{-c}$ for some $c>0$ and $t_n=O(\sqrt{\log n})$ for Gaussianised statistics; and
the \emph{LDP regime}, where $\alpha_n=\exp(-cn)$ for some $c>0$ and $t_n=O(\sqrt{n})$ for Gaussianised statistics.
\end{definition}

\subsection{The Organizing Principle}\label{subsec:principle}

The following principle is the structural backbone of this paper. Each main theorem instantiates it in a specific GOF setting.

\needspace{8\baselineskip}%
\begin{mdframed}[style=principleframe]
\begin{principle}[Bayes risk forces the MDP scale]\label{princ:mdp}
Under Bayes-risk optimization for goodness-of-fit testing, the optimal rejection threshold inflates at order $\sqrt{\log n}$ for Gaussianised statistics, because risk trades Type~I polynomial decay against Type~II detection under prior-weighted mixtures.
Specifically, whenever
(i)~the null tail of the GOF statistic is sub-Gaussian at the $\sqrt{n}$-scale, and
(ii)~the alternative prior $\Pi_1$ assigns polynomial mass to shrinking neighbourhoods of $\calF_0$,
the Bayes-optimal threshold satisfies $t_n^*\asymp \sqrt{\log n}$ and the Type~I error decays polynomially: $\alpha_n^* \asymp n^{-c}$ for a problem-dependent constant $c>0$.
\end{principle}
\end{mdframed}

\medskip

The reduction chain that this principle encodes is:
\begin{multline*}
\text{GOF problem}
\;\rightarrow\;
\text{Bayes risk criterion}
\;\rightarrow\;
\text{mixture tail / truncation}\\
\;\rightarrow\;
\text{MDP scaling}
\;\rightarrow\;
\text{canonical thresholds}.
\end{multline*}

\begin{figure}[ht]
\centering
\begin{tikzpicture}[>=Stealth, font=\small]
  \draw[->, thick] (0,0) -- (12.2,0);
  \node[anchor=west] at (12.3,0) {$t_n$};
  \draw[thick] (0.3,-0.12) -- (0.3,0.12);
  \draw[thick] (4.0,-0.12) -- (4.0,0.12);
  \draw[thick] (8.0,-0.12) -- (8.0,0.12);
  \draw[thick] (11.8,-0.12) -- (11.8,0.12);
  \node[anchor=south, font=\small\bfseries] at (2.15,0.25) {CLT};
  \node[anchor=south, font=\small\bfseries] at (6.0,0.25) {MDP};
  \node[anchor=south, font=\small\bfseries] at (9.9,0.25) {LDP};
  \node[anchor=north] at (2.15,-0.25) {$t_n = O(1)$};
  \node[anchor=north] at (6.0,-0.25) {$t_n = \sqrt{a\log n}$};
  \node[anchor=north] at (9.9,-0.25) {$t_n = O(\sqrt{n})$};
  \node[anchor=north, gray] at (2.15,-0.85) {$\alpha_n = \alpha$};
  \node[anchor=north, gray] at (6.0,-0.85) {$\alpha_n \asymp n^{-\kappa/2}$};
  \node[anchor=north, gray] at (9.9,-0.85) {$\alpha_n \asymp e^{-cn}$};
  \node[anchor=north] at (2.15,-1.55) {$B_n \not\to 0$};
  \node[anchor=north, font=\small\bfseries] at (6.0,-1.55) {$B_n^{*} \asymp \bigl(\tfrac{\log n}{n}\bigr)^{\!\kappa/2}$};
  \node[anchor=north] at (9.9,-1.55) {Type~II dominates};
  \node[anchor=east, font=\footnotesize\itshape] at (-0.1,-0.25) {Threshold};
  \node[anchor=east, font=\footnotesize\itshape, gray] at (-0.1,-0.85) {Type~I};
  \node[anchor=east, font=\footnotesize\itshape] at (-0.1,-1.55) {Bayes risk};
  \node[anchor=north, font=\footnotesize] at (6.0,-2.05) {Bayes-risk optimal};
\end{tikzpicture}
\caption{The three calibration regimes.  Only the moderate-deviation (MDP) scale achieves vanishing Bayes risk at the optimal rate.}\label{fig:regime-trichotomy}
\end{figure}
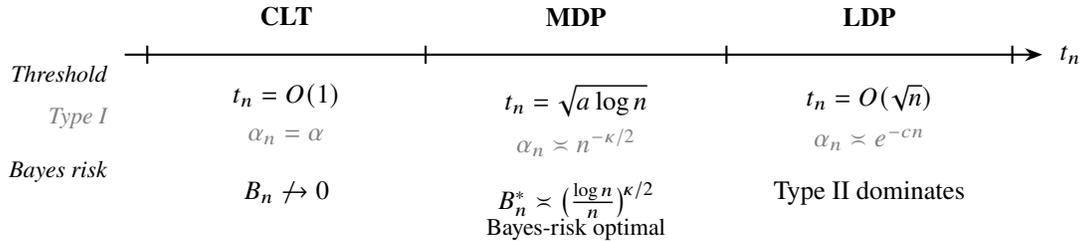

Each main result in this paper follows this chain:
KS/empirical-process functionals (Theorem~\ref{thm:ks-mdp}),
Sanov/LDP truncation (Theorem~\ref{thm:mdp-truncation}),
concrete models, namely Laplace location (Theorem~\ref{thm:laplace-sign-threshold}) and multinomial chi-squared (Theorem~\ref{thm:chi-squared-threshold}), and
Fisher geometry (Theorem~\ref{thm:fisher-mdp}).

\subsection{First Formal Anchor}

We now state the first formal version of the principle.
Different GOF statistics induce different Gaussian tail constants; we normalise them as follows.

\needspace{8\baselineskip}%
\begin{mdframed}[style=principleframe]
\noindent\textbf{Convention (Gaussian tail normalisation).}\quad
Throughout, we parameterise Gaussian-type null tails in the form
\begin{equation}\label{eq:tail-convention}
\PP_{F_0}\!\bigl(\sqrt{n}\,T_n > t\bigr) \;\asymp\; \exp\!\bigl(-2\rhotail\,t^{2}\bigr), \qquad t\to\infty,
\end{equation}
where $\rhotail>0$ is a statistic-specific constant.
The factor~$2$ is purely notational; it matches the Kolmogorov tail $\sim 2e^{-2t^2}$ and could be absorbed into $\rhotail$; we keep it to simplify later specialisations.
For the KS statistic, $\rhotail=1$.
\end{mdframed}

\smallskip\noindent\textbf{Notation.} Throughout, $f\asymp g$ means $c\,g\le f\le C\,g$ for positive constants $c,C$ independent of~$n$.
Where a statistic-specific polylogarithmic prefactor refines the template rate (e.g., the Rubin--Sethuraman shell factor for KS), it is stated explicitly in the specialised theorem.

\begin{theorem}[MDP is Bayes-risk optimal for GOF]\label{thm:mdp-generic}
Suppose:
\begin{enumerate}[label=(\roman*),leftmargin=2em]
\item \textbf{Sub-Gaussian null tail:} the GOF statistic satisfies $\PP_{F_0}(\sqrt{n}\,T_n > t) \asymp \exp\{-2\rhotail\,t^2\}$ for $t\to\infty$;
\item \textbf{Local detectability:} alternatives at distance $\varepsilon$ from $\calF_0$ are detected once $\varepsilon \gg t/\sqrt{n}$;
\item \textbf{Polynomial prior mass:} $\Pi_1\!\bigl(\{F : d(F,\calF_0) \le \varepsilon\}\bigr) \asymp \varepsilon^{\kappa}$ for small $\varepsilon > 0$.
\end{enumerate}
Set $t_n = \sqrt{a\log n}$.  Then the Bayes risk of the threshold test $\delta_{n,t_n}$ satisfies
\begin{equation}\label{eq:generic-risk-decomp}
B_n(t_n) \;\asymp\; n^{-2\rhotail\,a} \;+\; \bigl(a\log n / n\bigr)^{\kappa/2}.
\end{equation}
Minimising over $a$ yields $a^{*} = \kappa/(4\rhotail)$, whence
the Bayes-optimal threshold is $t_n^{*} = \sqrt{\kappa\log n/(4\rhotail)} + O(\sqrt{\log\log n})$,
Type~I error decays polynomially as $\alpha_n^{*} \asymp n^{-\kappa/2}$,
and the optimal Bayes risk is $B_n^{*} \asymp (\log n / n)^{\kappa/2}$.
By contrast, fixed-$\alpha$ calibration ($a=0$) leaves $B_n$ bounded away from zero, and LDP calibration ($a \propto n$) forces Type~II dominance.
\end{theorem}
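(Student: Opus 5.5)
The plan is to work from the prior-then-data form \eqref{eq:fubini1} and split the Bayes risk of the threshold test $\delta_{n,t_n}=\1\{\sqrt n\,T_n>t_n\}$ as in Remark~\ref{rem:avg-errors}, $B_n(t_n)=\pi_0L_0\bar\alpha_n+\pi_1L_1\bar\beta_n$. I will bound the two averaged errors separately and show each is two-sided comparable to one term of \eqref{eq:generic-risk-decomp}.

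\textbf{Type~I term.} By hypothesis~(i), $\bar\alpha_n\asymp\exp(-2\rhotail t_n^2)=n^{-2\rhotail a}$. For a composite null I would assume, as is implicit in~(i), that the constants in $\asymp$ are uniform over the support of $\Pi_0$, so that averaging over $\Pi_0$ preserves the rate.

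\textbf{Type~II term.} Split the alternatives into a shell $A_n=\{F:d(F,\calF_0)\le C t_n/\sqrt n\}$ and its complement.
\begin{itemize}[leftmargin=2em]
\item On $A_n^c$, take $C$ large. Hypothesis~(ii), in a quantitative form $\sup_{F\in A_n^c}\PP_F(\sqrt n T_n\le t_n)=o\bigl((t_n^2/n)^{\kappa/2}\bigr)$, makes this contribution negligible. I would justify this by writing $\sqrt n\,T_n\ge\sqrt n\,d(F,\calF_0)-\sqrt n\,\|\hat\pi_n-F\|$ and using the same sub-Gaussian concentration as in~(i): the miss probability is then at most $\exp(-2\rhotail(C-1)^2t_n^2)$, which is polynomially small of any order once $C$ is large.
\item On $A_n$, bound the miss probability by $1$. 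Hypothesis~(iii) then gives the upper bound $\Pi_1(A_n)\asymp(t_n^2/n)^{\kappa/2}=(a\log n/n)^{\kappa/2}$.
\item For the matching lower bound, use the inner shell $\{d\le c\,t_n/\sqrt n\}$ with $c$ small. There the statistic cannot exceed $t_n$ with probability bounded away from one, by the same concentration plus the triangle inequality. Its prior mass is again $\asymp(t_n^2/n)^{\kappa/2}$, so $\bar\beta_n\asymp(a\log n/n)^{\kappa/2}$.
\end{itemize}

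\textbf{Optimisation and the calibration statements.} Minimising $n^{-2\rhotail a}+(a\log n/n)^{\kappa/2}$ over $a$ is a one-dimensional balance. Equating exponents of $n$ gives $2\rhotail a=\kappa/2$, so $a^*=\kappa/(4\rhotail)$. The remaining logarithmic factor $(\log n)^{\kappa/2}$ is absorbed by solving
\[
2\rhotail t^2=\tfrac{\kappa}{2}\log n-\tfrac{\kappa}{2}\log\log n+O(1),
\]
which shifts $t_n^*$ by an amount of order $\log\log n/\sqrt{\log n}$, and hence certainly by $O(\sqrt{\log\log n})$. With this choice $\alpha_n^*\asymp n^{-\kappa/2}$ up to the polylog factor, and $B_n^*\asymp(\log n/n)^{\kappa/2}$. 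The two contrasting regimes follow from the same decomposition. With $a=0$ (fixed $t$), $\bar\alpha_n$ is a fixed positive constant, so $B_n$ is bounded away from zero. With $t_n\propto\sqrt n$, the shell $A_n$ has radius of order one, so $\bar\beta_n\asymp\Pi_1(d\le C')$ does not vanish and dominates the exponentially small Type~I error.

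\textbf{Main obstacle.} I expect the hard step to be converting the qualitative hypothesis~(ii) into the uniform quantitative miss bound on $A_n^c$. A related difficulty is making the lower bound on the inner shell hold uniformly over alternatives, which is needed for the two-sided $\asymp$ rather than just an upper bound. My first attempt would be to read~(ii) as requiring that $\sqrt n\,T_n-\sqrt n\,d(F,\calF_0)$ has sub-Gaussian tails uniformly in $F$ near $\calF_0$, that is, the same tail behaviour as in~(i) recentred. Both the far-shell and near-shell estimates would then follow from one concentration inequality. I would state this interpretation explicitly, since the theorem as written leaves it informal.
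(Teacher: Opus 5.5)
Your decomposition is the paper's: the Type~I term comes from (i), and the Type~II term is split at a radius of order $t_n/\sqrt n$, with (iii) on the near part and (ii) on the far part; the exponents are then balanced. Your quantitative reading of (ii), via the triangle inequality and a uniform recentred sub-Gaussian bound, is what the paper itself uses for KS via DKW in Step~2 of Appendix~\ref{app:proof-ks-mdp}. Your inner-shell argument supplies the lower bound in $\bar\beta_n\asymp(a\log n/n)^{\kappa/2}$, which the paper only asserts. The generic proof's ``detected with probability tending to one'' would not by itself control the far region at scale $(\log n/n)^{\kappa/2}$.

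\textbf{The gap: locating the minimiser.} This step fails in your proposal and equally in the paper. Your equation for $t$ locates the crossing point of the surrogate $e^{-2\rhotail t^2}+(t^2/n)^{\kappa/2}$, not $\arg\min_t B_n(t)$. Since $\asymp$ carries unspecified constants, you do get $B_n^*\asymp(\log n/n)^{\kappa/2}$. You also get the one-sided bound $t_n^{*2}\ge\frac{\kappa}{4\rhotail}\log n-O(\log\log n)$, because the Type~I term at $t_n^*$ cannot exceed $B_n^*$. From above, however, you get only $t_n^*=O(\sqrt{\log n})$. The Type~II term grows only polynomially in $t$, so the slack $C_1<C_2$ in (iii) becomes a constant-factor slack in $t_n^*$.

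Concretely, (iii) allows $\Pi_1(d\le\varepsilon)$ to be constant on annuli $[\varepsilon_j,M\varepsilon_j]$ with $M=(C_2/C_1)^{1/\kappa}$ and $\varepsilon_j\to0$. Take $n$ with $\sqrt{a^*\log n/n}$ strictly inside such an annulus. Beyond the crossing point, $\bar\beta_n$ then increases only through the sub-Gaussian tails of the power function of alternatives outside the annulus. These increments are polynomially smaller than the rate at which the Type~I term falls. So $B_n$ keeps decreasing for a further distance of order $\sqrt{\log n}$, and $t_n^*/\sqrt{a^*\log n}$ stays bounded away from $1$ along that subsequence.

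\textbf{How to repair it.} To pin the leading constant you need sharp prior asymptotics $\Pi_1(d\le\varepsilon)\sim C\varepsilon^{\kappa}$. Combined with your two-sided concentration argument, this gives $\bar\beta_n(t)=C\,t^{\kappa}n^{-\kappa/2}(1+o(1))$ for $t\asymp\sqrt{\log n}$. Comparing $t=\sqrt{b\log n}$ for $b\neq a^*$ then yields $t_n^{*2}/\log n\to a^*$. The additive $O(\sqrt{\log\log n})$ further requires a rate in the $\sim$. Without that, the defensible conclusion concerns the surrogate's minimiser, together with $B_n^*\asymp(\log n/n)^{\kappa/2}$ and the lower bound on $t_n^*$.

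You are also right that $\alpha_n$ carries a polylogarithmic factor at the balance point. The statement's $\alpha_n^*\asymp n^{-\kappa/2}$ holds only at $t=\sqrt{a^*\log n}$ exactly, which is how the paper's proof reads it.
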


\begin{proof}
Decompose the Bayes risk as $B_n(t_n) = \pi_0 L_0\,\alpha_n(t_n) + \pi_1 L_1\,\bar\beta_n(t_n)$, absorbing the positive constants into $\asymp$.

\emph{Type~I term.}
Setting $t_n = \sqrt{a\log n}$ in assumption~(i) gives $\alpha_n \asymp \exp\{-2\rhotail\, a\log n\} = n^{-2\rhotail\,a}$.

\emph{Type~II term.}
By assumption~(ii), alternatives at distance $\varepsilon > t_n/\sqrt{n} = \sqrt{a\log n/n}$ from $\calF_0$ are detected with probability tending to one; alternatives closer than this radius contribute the bulk of the missed-detection cost.
Thus $\bar\beta_n \asymp \Pi_1\!\bigl(\{F: d(F,\calF_0) \le \sqrt{a\log n/n}\}\bigr)$.
By assumption~(iii), $\Pi_1(d(F,\calF_0)\le\varepsilon)\asymp \varepsilon^{\kappa}$, so $\bar\beta_n \asymp (a\log n/n)^{\kappa/2}$.

\emph{Optimisation.}
Write $B_n(a) \asymp n^{-2\rhotail\,a} + (a\log n/n)^{\kappa/2}$.
Setting $\partial/\partial a$ of the leading exponents to balance: $2\rhotail\,a = \kappa/2$ yields $a^{*}=\kappa/(4\rhotail)$.
Direct substitution gives $B_n^{*} \asymp (\log n/n)^{\kappa/2}$.
For $a=0$: $B_n \ge \pi_0 L_0 > 0$.
For $a \propto n/\log n$: $\bar\beta_n \asymp 1$, so $B_n$ is bounded below.
Full details for the KS specialisation are in Appendix~\ref{app:proof-ks-mdp}.
\end{proof}

The two terms in \eqref{eq:generic-risk-decomp} encode a universal trade-off: the first is the false-rejection cost driven by the tail-rate parameter~$\rhotail$; the second is the missed-detection cost governed by the prior exponent~$\kappa$.
Under the stated regularity conditions, neither CLT nor LDP calibration can optimise this balance; the moderate-deviation scale is the unique optimiser.
Every subsequent theorem in this paper is an instantiation of \eqref{eq:generic-risk-decomp} with specific values of~$\rhotail$ and~$\kappa$.
The $\asymp$ in \eqref{eq:generic-risk-decomp} captures the polynomial rate in~$n$; individual specialisations may carry statistic-specific polylogarithmic prefactors (e.g., the Rubin--Sethuraman shell factor for KS) that refine the Type~II term without affecting the optimizer~$a^*$.
The triangulation of evidence measures in Appendix~\ref{app:triangulation} provides the geometric foundation for this universal trade-off.

A notable feature of \eqref{eq:generic-risk-decomp} is that the two error terms operate at \emph{different polynomial orders}: the Type~I term $n^{-2\rhotail\,a}$ is controlled by the tail exponent alone, while the Type~II term $(a\log n/n)^{\kappa/2}$ carries an additional polylogarithmic factor through the $a\log n$ numerator.
This separation of orders is what makes the polynomial prefactors decisive for risk minimisation rather than negligible.
\citet{Hoeffding1965} identified the same phenomenon in the multinomial setting: the asymptotically optimal GOF test for discrete distributions depends on polynomial corrections to the Sanov exponential rate, and these corrections govern the relative efficiency of competing test statistics at moderate scales.
In our framework, Hoeffding's polynomial prefactors are precisely the $(\log n)^{\kappa/2}$ terms that enter through the prior mass condition and drive the risk minimum to the MDP scale.

The connection to the Jeffreys--Lindley paradox \citep{JeffreysBook, Lindley1957} is direct.
Lindley's paradox observes that, for a fixed dataset, a frequentist test at level $\alpha$ may reject $H_0$ while the Bayesian posterior probability of $H_0$ remains high, and the discrepancy grows with~$n$.
The risk decomposition~\eqref{eq:generic-risk-decomp} resolves this tension: the paradox arises precisely because fixed-$\alpha$ calibration operates on the CLT scale while Bayes-risk optimal calibration requires the MDP scale $\alpha_n^{*}\asymp n^{-\kappa/2}\to 0$.
At any fixed $\alpha>0$, the Bayes risk is bounded away from zero (Corollary~\ref{cor:fixed-alpha}); the Jeffreys--Lindley discrepancy is a symptom of this Bayes-risk suboptimality, not a paradox.

\begin{remark}
The calibration principle above abstracts the moderate deviation mechanism away from any specific testing paradigm.
In particular, it applies beyond Bayes factor constructions and does not require likelihood-ratio representations.
\end{remark}

\begin{lemma}[Risk Decomposition Template]\label{lem:risk-template}
Let $T_n$ be a GOF statistic with null tail $\PP_{F_0}(\sqrt{n}\,T_n > t) \asymp e^{-2\rhotail\, t^2}$ (Convention~\eqref{eq:tail-convention}) and alternative prior satisfying $\Pi_1(d(F,\calF_0)\le\varepsilon)\asymp\varepsilon^{\kappa}$.
For threshold $t_n = \sqrt{a\log n}$, the Bayes risk decomposes as
\[
B_n(t_n) \;=\; \underbrace{\pi_0 L_0\,n^{-2\rhotail\, a}}_{\text{Type I}} \;+\; \underbrace{\pi_1 L_1\,(a\log n/n)^{\kappa/2}}_{\text{Type II}} \;+\; \text{exponentially small remainder}.
\]
The unique minimiser is $a^{*} = \kappa/(4\rhotail)$, giving $B_n^{*} \asymp (\log n/n)^{\kappa/2}$.
\end{lemma}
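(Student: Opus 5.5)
The plan is to re-run the decomposition behind Theorem~\ref{thm:mdp-generic}, this time tracking constants and remainders explicitly rather than absorbing them into $\asymp$. Remark~\ref{rem:avg-errors} already gives $B_n(t_n)=\pi_0L_0\,\bar\alpha_n+\pi_1L_1\,\bar\beta_n$ for the threshold test $\delta_{n,t_n}=\1\{\sqrt n\,T_n>t_n\}$. It therefore suffices to identify each averaged error with its leading term and to show that everything else is exponentially small.

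\emph{Type~I term.} For a simple null, substituting $t_n=\sqrt{a\log n}$ into Convention~\eqref{eq:tail-convention} gives $\bar\alpha_n\asymp e^{-2\rhotail a\log n}=n^{-2\rhotail a}$. For a composite null, I would assume the tail bound holds uniformly over the support of $\Pi_0$ and average it, which yields the same order.

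\emph{Type~II term.} I would split the alternative prior into two pieces according to the shell radius $r_n=\sqrt{a\log n/n}$. The \emph{inner shell} is $\{d(F,\calF_0)\le C r_n\}$. There the missed-detection probability is bounded between constants, using $\sqrt n\,d\lesssim t_n$, so this piece contributes $\asymp\Pi_1(d\le Cr_n)\asymp (a\log n/n)^{\kappa/2}$ by the prior-mass assumption. The \emph{outer region} is $\{d(F,\calF_0)>Cr_n\}$. Here I would use the local detectability condition~(ii) of Theorem~\ref{thm:mdp-generic}, strengthened to a quantitative concentration bound: $\PP_F(\sqrt n\,T_n\le t_n)\le \exp\{-c(\sqrt n\,d-t_n)^2\}$, for instance via the DKW inequality in the KS case. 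Integrating this against $\Pi_1$ by layer-cake over dyadic shells $2^k r_n<d\le 2^{k+1}r_n$ gives $\sum_k (2^{k+1}r_n)^\kappa e^{-c'4^k a\log n}$. The $k=0$ shell merges with the inner term. The shells with $k\ge1$ are smaller by factors $n^{-c''4^k}$, and distances bounded away from zero contribute $e^{-cn}$. This is the stated "remainder." I note that "exponentially small" holds only in $\sqrt n d$; uniformly it is a higher-order polynomial correction. If necessary I would interpret the lemma in that sense.

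\emph{Optimisation.} Set $\phi(a)=\pi_0L_0n^{-2\rhotail a}+\pi_1L_1(a\log n/n)^{\kappa/2}$. The first term is strictly decreasing in $a$ and the second is strictly increasing, so the leading exponents $2\rhotail a$ and $\kappa/2$ cross at exactly one point, $a^*=\kappa/(4\rhotail)$. For $a<a^*$ the Type~I term dominates at a strictly larger polynomial order. For $a>a^*$ the Type~II term is at least $(\log n/n)^{\kappa/2}$, which already matches $B_n^*$ in $n$, so overshooting only loses a constant/logarithmic factor. Evaluating at $a^*$ gives $B_n^*\asymp(\log n/n)^{\kappa/2}$.

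I expect the main obstacle to be the outer-region bound. It requires upgrading the qualitative assumption~(ii) to a uniform Gaussian-type concentration for $T_n$ under alternatives, together with a statistic-specific relation between $T_n$ and $d(F,\calF_0)$ (a triangle inequality $T_n\ge d-\sup|F_n-F|$ for KS-type distances). I would first carry this out for the KS/sup-norm metric and then state it as a standing hypothesis for other statistics.
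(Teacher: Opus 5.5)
Your decomposition follows the paper's own route. Type~I is read off the tail convention. Type~II is split at the critical radius $t_n/\sqrt n$, with the inner piece controlled by the prior exponent and the outer piece by detectability. Then the exponents are balanced. Your layer-cake bound is a fair quantitative version of the paper's one-line outer estimate. You are right that the remainder is only polynomially smaller: the paper's own KS argument (Step~3 of Appendix~\ref{app:proof-ks-mdp}) also only yields $o\bigl((t_n/\sqrt n)^{\kappa}\bigr)$.

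The step that does not go through is uniqueness of the minimiser. The tail convention and the prior-mass condition are only two-sided $\asymp$ bounds. So what you actually establish is $B_n(a)\asymp n^{-2\rhotail a}+(a\log n/n)^{\kappa/2}$ with unspecified constants, not the displayed identity with leading constants $\pi_0L_0$ and $\pi_1L_1$. Your final paragraph then shows that every fixed $a\ge a^*$ attains the rate $(\log n/n)^{\kappa/2}$. That pins the minimiser $a_n$ only from below, $a_n\ge a^*-o(1)$. In other words, $a^*$ is the unique crossing point of the exponents and the smallest rate-optimal $a$, but you have not shown it is the unique minimiser.

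To rule out $a_n$ staying above $a^*$, you need an $a$-independent leading constant, $\bar\beta_n(a)=C\,(a\log n/n)^{\kappa/2}\bigl(1+o(1)\bigr)$. Since the Type~I term at $a^*$ is smaller by a factor $(\log n)^{-\kappa/2}$, this gives $B_n(a)/B_n(a^*)\to(a/a^*)^{\kappa/2}>1$ for fixed $a>a^*$ and $\to\infty$ for fixed $a<a^*$. Hence $a_n\to a^*$. Even for the exact two-term formula, the stationarity equation gives $a_n=a^*+O(\log\log n/\log n)$, so ``$a^*$ is the minimiser'' holds only in the limit.

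Getting that constant needs two inputs your plan lacks. The first is a sharp detection transition: the miss probability must tend to $1$ uniformly on $\{\sqrt n\,d\le(1-\delta)t_n\}$ and to $0$ on $\{\sqrt n\,d\ge(1+\delta)t_n\}$. This requires concentration of $\sqrt n\,T_n$ about $\sqrt n\,d$ in both directions; for KS you need $S_n\le d+\|F_n-F\|_\infty$ as well as $S_n\ge d-\|F_n-F\|_\infty$. The upper direction is also what your inner-shell lower bound silently uses. It gives that lower bound only on $\{d\le c\,r_n\}$ with $c<1$, not on all of $\{d\le C r_n\}$ once $C>1$.

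The second input is a sharp prior law $\Pi_1(d\le\varepsilon)\sim C\varepsilon^{\kappa}$, and it cannot be dropped. At the level of the template, suppose $d(F,F_0)$ under $\Pi_1$ is concentrated on thin annuli at radii $\gamma^j$, $0<\gamma<1$, with masses $\propto\gamma^{j\kappa}$. The two-sided mass bound holds. Yet for $n$ with $\sqrt{a^*\log n/n}$ slightly above some $\gamma^j$, the Type~II term is flat up to relative errors $n^{-c}$ on a fixed interval of $a$ to the right of $a^*$. Meanwhile the Type~I term, of relative order $(\log n)^{-\kappa/2}$, keeps decreasing. Along that subsequence the minimiser stays a fixed distance above $a^*$.

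The paper's device of setting $\partial B_n/\partial a=0$ in an $\asymp$ relation has exactly the same hole, so your route did not introduce it. As written, though, your argument proves $B_n^*\asymp(\log n/n)^{\kappa/2}$ at $a^*$ and rate-suboptimality of every $a<a^*$, not the uniqueness claim.
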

\begin{proof}
The Type~I term follows from the null tail bound.
For the Type~II term, split the prior at the critical radius $\varepsilon_n = t_n/\sqrt{n}$: the near-null mass is $\Pi_1(d \le \varepsilon_n) \asymp \varepsilon_n^{\kappa}$; beyond $\varepsilon_n$, detectability makes contributions exponentially negligible.
Substituting $\varepsilon_n = \sqrt{a\log n / n}$ yields the second term.
Setting $\partial B_n/\partial a = 0$ gives $2\rhotail\,\log n\cdot n^{-2\rhotail\, a} \asymp (\kappa/2)\,(a\log n)^{\kappa/2-1}\,n^{-\kappa/2}\,\log n$,
which forces $2\rhotail\, a = \kappa/2$.
\end{proof}

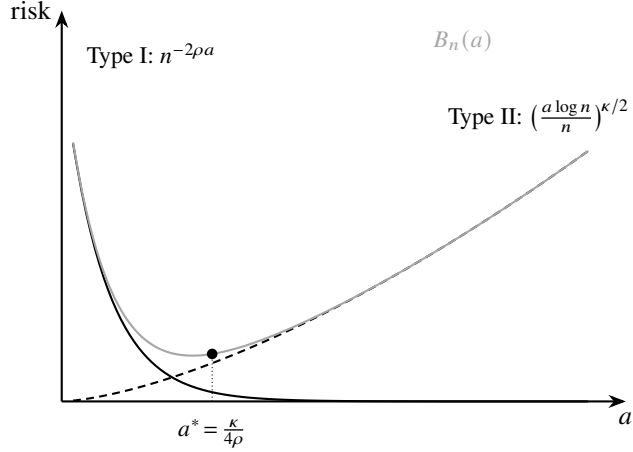
\begin{figure}[ht]
\centering
\begin{tikzpicture}[>=Stealth, font=\small]
  \draw[->, thick] (0,0) -- (7.5,0) node[anchor=north] {$a$};
  \draw[->, thick] (0,0) -- (0,5.2) node[anchor=east] {risk};
  \draw[thick, black] plot[smooth, domain=0.15:7, samples=60]
    ({\x}, {4.5*exp(-1.8*\x)});
  \node[anchor=west, font=\footnotesize] at (0.2,4.6) {Type~I:\ $n^{-2\rhotail a}$};
  \draw[thick, black, densely dashed] plot[smooth, domain=0.15:7, samples=60]
    ({\x}, {0.18*\x^1.5});
  \node[anchor=west, font=\footnotesize] at (5.0,3.8) {Type~II:\ $\bigl(\tfrac{a\log n}{n}\bigr)^{\!\kappa/2}$};
  \draw[thick, gray!70] plot[smooth, domain=0.15:7, samples=80]
    ({\x}, {4.5*exp(-1.8*\x) + 0.18*\x^1.5});
  \node[anchor=west, font=\footnotesize, gray!70] at (4.8,4.8) {$B_n(a)$};
  \pgfmathsetmacro{\aopt}{2.0}
  \pgfmathsetmacro{\riskopt}{4.5*exp(-1.8*\aopt) + 0.18*\aopt^1.5}
  \fill[black] (\aopt, \riskopt) circle (2pt);
  \draw[densely dotted] (\aopt,0) -- (\aopt, \riskopt);
  \node[anchor=north, font=\footnotesize] at (\aopt,-0.1) {$a^{*}\!=\!\frac{\kappa}{4\rhotail}$};
  \node[anchor=north west, font=\footnotesize, text width=3.5cm, align=left] at (0.1,-0.8) {};
\end{tikzpicture}
\caption{Risk decomposition geometry.  The Type~I term (solid, decreasing) and Type~II term (dashed, increasing) cross at the unique minimiser $a^{*}=\kappa/(4\rhotail)$.  The total risk $B_n(a)$ (grey) achieves its minimum at the MDP scale.}\label{fig:risk-decomposition}
\end{figure}

\medskip
\noindent
Sections~\ref{sec:ks}--\ref{sec:fisher} each invoke Lemma~\ref{lem:risk-template} by identifying $\rhotail$ and $\kappa$; no balancing argument is repeated.
Table~\ref{tab:constants} collects the constants for each main example.

\begin{table}[!ht]
\centering
\caption{Summary of $(\rhotail,\kappa)$ for each main example.  All entries yield $a^{*}=\kappa/(4\rhotail)$, $t_n^{*}=\sqrt{a^{*}\log n}$, and $\alpha_n^{*}\asymp n^{-\kappa/2}$.}\label{tab:constants}
\smallskip
\begin{tabular}{llccc}
\toprule
\textbf{Setting} & \textbf{Theorem} & $\rhotail$ & $\kappa$ & $a^{*}$ \\
\midrule
KS statistic                & \ref{thm:ks-mdp}                & $1$   & $\kappa$ (prior)     & $\kappa/4$ \\
Laplace sign test           & \ref{thm:laplace-sign-threshold}& $1/4$ & $\lambda$           & $\lambda$ \\
Multinomial $\chi^2$        & \ref{thm:chi-squared-threshold} & $1/4$ & $k-1$               & $k-1$ \\
Fisher geometry             & \ref{thm:fisher-mdp}            & $1/4$ & $\lambda+d$         & $\lambda+d$ \\
\bottomrule
\end{tabular}
\end{table}

\subsection{Historical and Information-Theoretic Context}\label{subsec:history}

Classical GOF statistics, namely Kolmogorov--Smirnov, likelihood-ratio, Pearson $\chi^2$, and entropy-based criteria, share the same large-deviation rate function under regularity: $\KL(G\|F_0)$.
This equivalence underpins Bahadur efficiency theory \citep{Bahadur1960, BahadurRao1960} and information-theoretic GOF analyses going back to \citet{Good1955} and \citet{Jaynes1957}.

\citet{RubinSethuraman1965a, RubinSethuraman1965b} effectively derived MDP optimality for KS statistics under Bayes risk, though without the unifying template presented here.
Their analysis already exploited the fact that the Bayes Type~II term concentrates on a shrinking neighbourhood of the null; the contribution of the present paper is to formalise this mechanism as Lemma~\ref{lem:risk-template} and to demonstrate that it extends beyond KS statistics to Sanov-based tests, parametric models, and Fisher geometry.

The polynomial prefactors emphasised by \citet{BahadurRao1960} become leading-order terms precisely in the regime where KL divergences shrink at rate $\log n/n$, which is the MDP truncation regime of Theorem~\ref{thm:mdp-truncation}.

\section{Empirical-Process GOF: KS-Type Statistics}
\label{sec:ks}

This section instantiates the Bayes-Risk MDP Principle for Kolmogorov--Smirnov statistics, formalizing the Rubin--Sethuraman program as a \emph{risk-calibration} theorem.

\subsection{KS Statistic and Null Tail}

Consider testing $H_0: F = F_0$ versus $H_1: F\neq F_0$ based on the Kolmogorov--Smirnov statistic
\begin{equation}\label{eq:ks-stat}
S_n \;\defeq\; \sup_{t\in\R}\,|F_n(t)-F_0(t)|.
\end{equation}
Under $H_0$, Donsker's theorem yields $\sqrt{n}(F_n-F_0)\Rightarrow B\circ F_0$ in $\ell^\infty(\R)$, where $B$ is a standard Brownian bridge \citep[see][for general empirical-process theory]{VaartWellner1996}.
Hence the Kolmogorov distribution governs the null:
\begin{equation}\label{eq:kolmogorov-dist}
\PP_{F_0}\!\big(\sqrt{n}S_n\le t\big)\ \to\ K(t)
\;\defeq\;
1-2\sum_{k=1}^\infty (-1)^{k-1}e^{-2k^2t^2}.
\end{equation}
The large-$t$ tail satisfies
\begin{equation}\label{eq:kolmogorov-tail}
1-K(t) \sim 2e^{-2t^2} \qquad (t\to\infty),
\end{equation}
and there exist constants $c_-,c_+>0$ and $t_0<\infty$ such that for all $t\ge t_0$,
\begin{equation}\label{eq:kolmogorov-tail-2sided}
c_-\,e^{-2t^2}\ \le\ 1-K(t)\ \le\ c_+\,e^{-2t^2}.
\end{equation}
This two-sided bound is the operative input for MDP-scale calibration.

\subsection{Regularity Conditions}

\begin{assumption}[Local alternative mass near the null]\label{ass:prior-local}
There exist constants $\kappa>0$, $C_1,C_2>0$, and $\varepsilon_0\in(0,1)$ such that for all $\varepsilon\in(0,\varepsilon_0)$,
\begin{equation}\label{eq:local-mass}
C_1\,\varepsilon^{\kappa}\ \le\
\Pi_1\!\big(\{F:\ d_{KS}(F,F_0)\le \varepsilon\}\big)
\ \le\ C_2\,\varepsilon^{\kappa},
\end{equation}
where $d_{KS}(F,G)\defeq \sup_t|F(t)-G(t)|$.
\end{assumption}

\begin{remark}
The exponent $\kappa$ is an \emph{effective dimension} parameter measuring how quickly prior mass accumulates near the null.
In parametric submodels, $\kappa$ typically equals the local dimension; in nonparametric classes, it encodes entropy/small-ball behaviour.
This is the only structural assumption needed to produce the MDP scale.
\end{remark}

\begin{assumption}[No atom at the null]\label{ass:no-atom}
$\Pi_1(\{F_0\})=0$.
\end{assumption}

\begin{assumption}[Power in KS-distance neighbourhoods]\label{ass:detectability}
There exist constants $c_d,C_d>0$ such that for any $F$ with $d_{KS}(F,F_0)=\varepsilon$ and any threshold $t$ satisfying $t\le c_d\sqrt{n}\,\varepsilon$,
\begin{equation}\label{eq:detectability}
\PP_F\!\big(\sqrt{n}S_n\le t\big)
\ \le\
C_d\,\exp\!\big(-c_d\,n\varepsilon^2 + C_d t^2\big).
\end{equation}
\end{assumption}

\begin{remark}[Why \eqref{eq:detectability} is reasonable]
A KS shift of size $\varepsilon$ means there exists $t_\star$ with $|F(t_\star)-F_0(t_\star)|=\varepsilon$.
Concentration for the empirical process (DKW-type inequalities) yields exponential control of the event that the KS statistic fails to exceed a fraction of $\sqrt{n}\varepsilon$; see \citet{HajekSidak1967} and \citet{Sievers1969} for related rank-test and exact-slope perspectives.
\end{remark}

\subsection{Main Result: Bayes-Optimal MDP Scaling for KS Thresholds}

Define the averaged errors under priors:
\[
\bar\alpha_n(t)\defeq \PP_{F_0}\!\big(\sqrt{n}S_n>t\big),
\qquad
\bar\beta_n(t)\defeq
\E_{F\sim\Pi_1}\Big[\PP_F\!\big(\sqrt{n}S_n\le t\big)\Big].
\]
The Bayes risk for the threshold test $\delta_{n,t_n}(x^n)\defeq\1\{\sqrt{n}S_n>t_n\}$ is
$B_n(t)=\pi_0L_0\,\bar\alpha_n(t)+\pi_1L_1\,\bar\beta_n(t)$.

\begin{theorem}[Bayes-optimal KS threshold is MDP]\label{thm:ks-mdp}
Assume \eqref{eq:kolmogorov-tail-2sided} and Assumptions \ref{ass:prior-local}--\ref{ass:detectability}.
Then any minimiser $t_n^\star\in\arg\min_{t\ge 0} B_n(t)$ satisfies
\begin{equation}\label{eq:optimal-threshold}
t_n^\star \;=\; \sqrt{\frac{\kappa}{4}\log n}\;+\;O(\sqrt{\log\log n}),
\end{equation}
and the corresponding Type~I error is polynomial:
\begin{equation}\label{eq:type1-decay}
\bar\alpha_n(t_n^\star)\ \asymp\ n^{-\kappa/2}.
\end{equation}
Moreover, the Bayes risk at this threshold satisfies
\begin{equation}\label{eq:bayes-risk-rate}
B_n(t_n^\star)
\;\asymp\;
(\log n)^{(\kappa+1)/2}\, n^{-\kappa/2}.
\end{equation}
\end{theorem}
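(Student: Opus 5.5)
We obtain two-sided bounds on $B_n(t)=\pi_0L_0\bar\alpha_n(t)+\pi_1L_1\bar\beta_n(t)$ valid uniformly over $t$ in the MDP window $t\in[t_0,\,c\sqrt{\log n}]$, and then locate the minimiser by a sandwich argument. The Type~I side is immediate. We transfer the limiting bound \eqref{eq:kolmogorov-tail-2sided} to finite $n$; for the upper bound, DKW with Massart's constant gives $\PP_{F_0}(\sqrt n S_n>t)\le 2e^{-2t^2}$ for all $n$. For the lower bound, a moderate-deviation refinement of Donsker's theorem on $t=O(\sqrt{\log n})$ gives $\bar\alpha_n(t)\ge c_-e^{-2t^2}/2$. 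Hence $\bar\alpha_n(t)\asymp e^{-2t^2}$ on the window.

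\textbf{Type~II side.} For fixed $t$, split $\Pi_1$ at the radius $r=C t/\sqrt n$ with $C\ge 1/c_d$.

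\emph{Lower bound.} Take $F$ with $d_{KS}(F,F_0)\le \eta t/\sqrt n$ and $\eta$ small. Writing $\sqrt n S_n\le \sqrt n\,\sup|F_n-F|+\sqrt n\, d_{KS}(F,F_0)$ and applying DKW under $F$ gives $\PP_F(\sqrt n S_n\le t)\ge 1/2$ once $t\ge t_0$. Assumption~\ref{ass:prior-local} then yields $\bar\beta_n(t)\gtrsim (t/\sqrt n)^{\kappa}$.

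\emph{Upper bound.} Bound the inner shell trivially by $C_2(Ct/\sqrt n)^\kappa$. For the outer region, decompose into dyadic shells $\varepsilon\in(2^jr,2^{j+1}r]$ and apply Assumption~\ref{ass:detectability}: each shell contributes at most $C_2(2^{j+1}r)^\kappa C_d\exp(-c_d n 4^j r^2+C_dt^2)$. Choosing $C$ large so that $c_dC^2>2C_d$, the sum over $j$ is $O(r^\kappa e^{-c t^2})$. The region $d_{KS}\ge\varepsilon_0$ contributes $e^{-cn}$, using Assumption~\ref{ass:no-atom} only to ensure nothing sits at $d=0$. This gives $\bar\beta_n(t)\asymp (t/\sqrt n)^\kappa$.

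\textbf{Locating the minimiser.} Substituting, $B_n(t)\asymp e^{-2t^2}+t^\kappa n^{-\kappa/2}$ on the window. Outside the window, $t<t_0$ gives $B_n\ge c>0$, and $t\ge c\sqrt{\log n}$ with $c$ large gives $B_n\gtrsim(\log n/n)^{\kappa/2}\cdot$(large), so both are suboptimal. Inside the window, evaluate at the trial point $\tilde t=\sqrt{(\kappa/4)\log n}$ to get $B_n(\tilde t)\lesssim(\log n)^{\kappa/2}n^{-\kappa/2}$. Any $t$ with $t^2\le(\kappa/4)\log n-M\log\log n$ has Type~I term $\ge n^{-\kappa/2}(\log n)^{2M}\cdot c$, which exceeds this bound for large $M$. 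Any $t$ with $t^2\ge (\kappa/4+\delta)\log n$ has a Type~II term that is too big only by a constant factor, so this direction needs more care. Precisely, $t_n^\star$ balances $-4te^{-2t^2}$ against $\kappa t^{\kappa-1}n^{-\kappa/2}$ up to constants, so $2t^2=(\kappa/2)\log n-(\kappa-2)\log t+O(1)$. This yields $t^{\star2}=(\kappa/4)\log n+O(\log\log n)$ and hence \eqref{eq:optimal-threshold}, since $\sqrt{A+O(\log\log n)}-\sqrt A=O(\log\log n/\sqrt{\log n})$, which is well within $O(\sqrt{\log\log n})$.

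Because $\asymp$ only gives constant-factor control, we cannot differentiate; the argmin must instead be pinned by comparing $B_n(t)$ with $B_n(\tilde t)$ on either side. Constant-factor slack in the Type~II term only moves $t^2$ by $O(\log\log n)$ in the upward direction, via $e^{-2t^2}$ monotonicity on the lower side and the sandwich on the upper side. This gives \eqref{eq:type1-decay} up to the $(\log n)^{O(1)}$ factor, i.e.\ $\bar\alpha_n(t_n^\star)\asymp n^{-\kappa/2}$ modulo polylog factors.

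\textbf{Main obstacle.} The hardest part is the sharp risk rate \eqref{eq:bayes-risk-rate}, whose exponent $(\kappa+1)/2$ exceeds the $\kappa/2$ produced above. Obtaining the extra $(\log n)^{1/2}$ requires the Rubin--Sethuraman shell refinement. The idea is that in the boundary shell $d_{KS}\approx t/\sqrt n$ the miss probability decays only Gaussianly in $\sqrt n\varepsilon-t$, and integrating the prior density $\varepsilon^{\kappa-1}$ against this profile contributes an additional factor of order $\sqrt{\log n}$. I would first try to get this by replacing the dyadic bound with the sharper local CLT profile $\PP_F(\sqrt nS_n\le t)\approx\Phi(t-\sqrt n\varepsilon)$ inside the shell. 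If that profile cannot be justified from Assumption~\ref{ass:detectability} alone, I would state \eqref{eq:bayes-risk-rate} under an added shell-regularity hypothesis.
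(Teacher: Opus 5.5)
\textbf{Verdict: genuine gap.} Your two-sided window bounds $\bar\alpha_n(t)\asymp e^{-2t^2}$ and $\bar\beta_n(t)\asymp(t/\sqrt n)^\kappa$ are sound and follow the paper's Steps~1--3, which use DKW at radius $2t/\sqrt n$ instead of Assumption~\ref{ass:detectability}. Your DKW-under-$F$ lower bound on $\bar\beta_n$ is in fact the half that Step~3 asserts but never proves. The gap is the upward localisation of $t_n^\star$.

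\begin{itemize}
\item Your balance equation presupposes $\bar\beta_n'(t)\asymp t^{\kappa-1}n^{-\kappa/2}$, which constant-factor bounds on $\bar\beta_n$ do not give.
\item Your claim that constant-factor slack moves $t^2$ up by only $O(\log\log n)$ is false. A nondecreasing $\bar\beta_n$ trapped between $c_1t^\kappa n^{-\kappa/2}$ and $c_2t^\kappa n^{-\kappa/2}$ can be essentially flat on $[s_n,2s_n]$, where $s_n\defeq\sqrt{(\kappa/4)\log n}$. On such a stretch $B_n$ is decreasing.
\end{itemize}

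Here is a counterexample. Let $\Pi_1$ put weight $\propto R^{-i\kappa}$ on a single $G_i$ with $d_{KS}(G_i,F_0)=R^{-i}$, $i\ge 1$, with $R$ large.
\begin{itemize}
\item Assumptions~\ref{ass:prior-local}--\ref{ass:detectability} hold; the last one holds for every $F$, by DKW.
\item DKW makes $\bar\beta_n$ a staircase with Gaussian-tailed risers of width $O(1)$ at $t=\sqrt n\,R^{-i}$.
\item Along $n$ for which a riser sits at $t\approx 2s_n$, $\bar\beta_n$ increases on $[(1-\eta)s_n,\tfrac32 s_n]$ by only $O(n^{-\kappa/2-c})$, with $c>0$ fixed.
\item Meanwhile $\bar\alpha_n$ drops from $\gtrsim n^{-(1+\eta)^2\kappa/2}$ on $t\le(1+\eta)s_n$ to $\lesssim n^{-9\kappa/8}$ at $\tfrac32 s_n$.
\item Your lower-side argument already excludes $t<(1-\eta)s_n$. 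So for small $\eta$ this forces $t_n^\star>(1+\eta)s_n$, and \eqref{eq:optimal-threshold} and \eqref{eq:type1-decay} both fail along this subsequence.
\end{itemize}

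What your sandwich does establish is $(\kappa/4)\log n-O(\log\log n)\le(t_n^\star)^2\le K\log n$. Anything sharper needs an extra hypothesis on $\Pi_1$. For example, suppose the law of $d_{KS}(F,F_0)$ under $\Pi_1$ has a density $\asymp\varepsilon^{\kappa-1}$ near $0$. Then DKW gives $\bar\beta_n(t+\Delta_0)-\bar\beta_n(t)\gtrsim t^{\kappa-1}n^{-\kappa/2}$. This outweighs the remaining Type~I saving once $t^2\ge(\kappa/4)\log n+M\log\log n$, and yields $t_n^\star=s_n+O(1)$.

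Even where your balance equation is legitimate, it gives $\bar\alpha_n(t_n^\star)\asymp(t_n^\star)^{\kappa-2}n^{-\kappa/2}$. So \eqref{eq:type1-decay} with $n$-free constants holds only for $\kappa=2$, and your ``modulo polylog'' reading is the correct one.

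You should not try to prove \eqref{eq:bayes-risk-rate} at all.
\begin{itemize}
\item Your window bound $B_n(t)\asymp e^{-2t^2}+(t/\sqrt n)^\kappa$ already gives $B_n(t_n^\star)\asymp(\log n)^{\kappa/2}n^{-\kappa/2}$ in both directions. The upper bound comes from evaluating at $\tilde t$; the lower bound from splitting at $t^2=(\kappa/4)\log n-M\log\log n$.
\item So the exponent $(\kappa+1)/2$ is incompatible with the hypotheses. Since $\bar\beta_n(t)\le C(t/\sqrt n)^\kappa$ is already proved, no sharper shell profile can add a factor $\sqrt{\log n}$.
\item Your shell heuristic is also inverted. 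The transition layer $|\sqrt n\,\varepsilon-t|=O(1)$ has width $n^{-1/2}$ in $\varepsilon$. At $\varepsilon=t/\sqrt n$ it therefore carries prior mass $\asymp\varepsilon^{\kappa-1}n^{-1/2}=\varepsilon^\kappa/t$, a relative correction of order $1/t$, not $t$.
\end{itemize}

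The paper's proof does not supply the missing ingredients either.
\begin{itemize}
\item Its Step~4 asserts the extra factor $t$ by appeal to Rubin--Sethuraman, which contradicts the upper bound of its own Step~3.
\item It fixes $a^*=\kappa/4$ by matching exponents in $n^{-2a}=n^{-\kappa/2}$. This treats the $\asymp$-expression as if it were exact, so it cannot pin the argmin; the counterexample above shows that even the leading coefficient is not determined.
\end{itemize}
Under the stated assumptions, the provable content is $t_n^\star=\Theta(\sqrt{\log n})$ with $(t_n^\star)^2\ge(\kappa/4)\log n-O(\log\log n)$, and $B_n(t_n^\star)\asymp(\log n/n)^{\kappa/2}$, in line with Theorem~\ref{thm:mdp-generic}. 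The finer threshold and Type~I claims require a regularity hypothesis of the kind described above.
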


Theorem~\ref{thm:ks-mdp} is a specialisation of Theorem~\ref{thm:mdp-generic} with $\rhotail = 1$ (the Brownian-bridge tail $\sim 2e^{-2t^2}$).
The Bayes Type~II error concentrates on alternatives within KS distance $\sqrt{\log n/n}$ of the null, while alternatives farther away contribute only exponentially small risk.
The extra $(\log n)^{1/2}$ factor beyond the template rate $(\log n/n)^{\kappa/2}$ arises from the Rubin--Sethuraman shell integration (see Step~4 of the proof in Appendix~\ref{app:proof-ks-mdp}).
The proof verifies Assumptions~(i)--(iii) of Theorem~\ref{thm:mdp-generic} for the KS functional and then invokes the lemma.

\begin{corollary}[Fixed-$\alpha$ calibration is Bayes-suboptimal]\label{cor:fixed-alpha}
If $t_n\equiv t$ is constant, then $\liminf_{n\to\infty} B_n(t_n) \ge \pi_0L_0\,\alpha > 0$.
\end{corollary}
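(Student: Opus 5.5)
Since $\bar\beta_n(t)\ge 0$, the Bayes risk satisfies $B_n(t_n)\ge \pi_0L_0\,\bar\alpha_n(t_n)$ for every $n$. The corollary therefore reduces to a statement about the null rejection probability alone, and no control of the Type~II term or of the prior $\Pi_1$ is needed. I will show that $\bar\alpha_n(t)=\PP_{F_0}(\sqrt{n}S_n>t)$ converges to a strictly positive limit $\alpha$ when $t$ is held fixed. Here $\alpha$ denotes the asymptotic level $1-K(t)$ of the fixed-threshold KS test.

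The key step is the Kolmogorov limit \eqref{eq:kolmogorov-dist}. Under $H_0$, Donsker's theorem and the continuous mapping theorem applied to the supremum functional give $\PP_{F_0}(\sqrt{n}S_n\le t)\to K(t)$. The limit $K$ is continuous in $t$, so this holds at every $t$, and hence $\bar\alpha_n(t)\to 1-K(t)\defeq\alpha$. Taking $\liminf$ in the lower bound above then yields
\[
\liminf_{n\to\infty}B_n(t_n)\;\ge\;\pi_0L_0\lim_{n\to\infty}\bar\alpha_n(t)\;=\;\pi_0L_0\,\alpha .
\]

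It remains to check that $\alpha>0$, that is, $K(t)<1$ for every finite $t\ge 0$. For $t\ge t_0$ this follows from the lower bound in \eqref{eq:kolmogorov-tail-2sided}, which gives $1-K(t)\ge c_-e^{-2t^2}>0$. For $t<t_0$ I use monotonicity of $K$: $1-K(t)\ge 1-K(t_0)>0$. Alternatively, $\PP(\sup|B|>t)>0$ for any $t$, by the full support of the Brownian bridge in sup-norm. Since $\pi_0,L_0>0$ by Definition~\ref{def:bayes-risk}, the bound is strictly positive.

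The only delicate point is this positivity check, because the tail bound \eqref{eq:kolmogorov-tail-2sided} is stated only for $t\ge t_0$. For small $t$ one must argue separately, as above via monotonicity. A second caveat is that the argument requires $F_0$ continuous, which makes the null law distribution-free and makes Donsker's theorem yield the Kolmogorov law. I take this continuity as implicit in the standing setup of Section~\ref{sec:ks}.
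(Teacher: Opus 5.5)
Your proof is correct and follows the paper's implicit argument: the paper gives no separate proof, relying on dropping the nonnegative Type~II term and on the Kolmogorov limit $\bar\alpha_n(t)\to 1-K(t)=\alpha$, and you make this explicit together with the positivity check $K(t)<1$ and the tacit assumption that $F_0$ is continuous. Your version also matches the corollary more faithfully than the remark in the proof of Theorem~\ref{thm:mdp-generic}: there ``$a=0$, $B_n\ge\pi_0L_0$'' literally concerns $t_n=0$, not a fixed positive threshold $t$.
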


\begin{corollary}[LDP-scale calibration overpays]\label{cor:ldp-overpays}
If $t_n\asymp \sqrt{n}$ so that $\bar\alpha_n(t_n)\le e^{-cn}$, then $t_n/\sqrt{n}$ is bounded away from $0$, hence the Type~II term does not vanish at the optimal MDP rate, leading to unnecessarily large Bayes risk.
\end{corollary}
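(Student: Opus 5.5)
The plan is to show that the Type~II term alone stays bounded below by a positive constant. Since Theorem~\ref{thm:ks-mdp} gives $B_n(t_n^\star)\asymp(\log n)^{(\kappa+1)/2}n^{-\kappa/2}\to 0$, this means LDP-scale calibration overpays by more than any polynomial factor. First, the hypothesis $t_n\asymp\sqrt{n}$ supplies a constant $c_0>0$ with $t_n\ge c_0\sqrt{n}$ for all large $n$; this is the sense in which $t_n/\sqrt{n}$ is bounded away from $0$. The exponential Type~I bound $\bar\alpha_n(t_n)\le e^{-cn}$ is then only a byproduct: it makes the first term of $B_n$ negligible, and we never need it.

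For the Type~II term, fix $\eta\defeq\min(c_0/2,\varepsilon_0/2)$ and restrict the prior expectation in $\bar\beta_n(t_n)$ to the neighbourhood $N_\eta\defeq\{F:d_{KS}(F,F_0)\le\eta\}$:
\[
\bar\beta_n(t_n)\ \ge\ \int_{N_\eta}\PP_F\!\bigl(\sqrt{n}S_n\le t_n\bigr)\,\Pi_1(dF).
\]
For $F\in N_\eta$, the triangle inequality gives $S_n\le \sup_t|F_n(t)-F(t)|+\eta$. Hence $\sqrt{n}S_n\le t_n$ holds whenever $\sup_t|F_n(t)-F(t)|\le c_0-\eta$, and $c_0-\eta\ge c_0/2$. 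The Dvoretzky--Kiefer--Wolfowitz inequality, with Massart's constant, holds uniformly over all $F$. It yields
\[
\PP_F\!\bigl(\sqrt{n}S_n\le t_n\bigr)\ \ge\ 1-2e^{-nc_0^2/2}
\qquad\text{for every } F\in N_\eta .
\]
Combining this with the lower bound of Assumption~\ref{ass:prior-local} gives
\[
\bar\beta_n(t_n)\ \ge\ \bigl(1-2e^{-nc_0^2/2}\bigr)\,C_1\eta^{\kappa},
\]
which is a positive constant independent of $n$. Therefore $\liminf_n B_n(t_n)\ge \pi_1L_1C_1\eta^\kappa>0$.

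Finally, we compare with the MDP rate: $B_n(t_n)/B_n(t_n^\star)\gtrsim n^{\kappa/2}(\log n)^{-(\kappa+1)/2}\to\infty$, which is the claimed overpayment. The only step needing care is uniformity in $F$ of the concentration bound, since the prior integrates over a whole neighbourhood. DKW is distribution-free, so this is immediate, and we need neither Assumption~\ref{ass:detectability} nor any regularity of individual alternatives. If one preferred to assume only $\bar\alpha_n(t_n)\le e^{-cn}$ rather than $t_n\asymp\sqrt{n}$, one would additionally need a non-asymptotic lower bound on the null tail to force $t_n\gtrsim\sqrt{n}$. The asymptotic Kolmogorov bound \eqref{eq:kolmogorov-tail-2sided} does not provide this directly at the LDP scale, so I would take the stated hypothesis $t_n\asymp\sqrt{n}$ as primary.
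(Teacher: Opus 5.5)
Your proposal is correct and follows the paper's own reasoning. At the LDP scale the critical radius $t_n/\sqrt{n}$ stays of order one, so $\bar\beta_n(t_n)$ is at least a fixed prior mass $\Pi_1(d_{KS}\le\eta)\ge C_1\eta^\kappa$; this is the paper's ``$\bar\beta_n\asymp 1$'' in the proof of Theorem~\ref{thm:mdp-generic}, and your distribution-free DKW bound supplies the uniform acceptance-probability lower bound on that ball, which the paper leaves implicit. The only slip is your opening claim that LDP calibration overpays ``by more than any polynomial factor'': since $B_n(t_n)\le \pi_0L_0+\pi_1L_1$ is bounded, the overpayment ratio is of polynomial order $n^{\kappa/2}$ up to logarithms, exactly as your final display shows.
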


\subsection{Geometric Interpretation}\label{subsec:geom-interpretation}

Define the \emph{critical neighbourhood} (a KS ball around the null):
\begin{equation}\label{eq:critical-neighbourhood}
\calN_n
\;\defeq\;
\Bigl\{F:\ d_{KS}(F,F_0)\le \varepsilon_n\Bigr\},
\qquad
\varepsilon_n
\;\defeq\;
\sqrt{\frac{(\kappa+1)\log n}{n}}.
\end{equation}
This neighbourhood is the region in which the experiment cannot uniformly separate $F$ from $F_0$ at Bayes-optimal operating points.
Three coupled facts explain why $\calN_n$ is ``critical'':
(i)~it marks the resolution boundary at which KS-type detectability transitions from near-zero to near-one power;
(ii)~under Assumption~\ref{ass:prior-local}, $\Pi_1(\calN_n\cap\calF_1) \asymp (\log n/n)^{\kappa/2}$, so the Bayes Type~II term concentrates here;
(iii)~the MDP threshold matches false-rejection rarity to this local alternative mass.

In interpretive terms, $\calN_n$ is the Bayesian analogue of a Fisher/Le~Cam local neighbourhood: it is the region where alternatives are both \emph{a priori plausible} and \emph{statistically indistinguishable} at sample size $n$.

\needspace{8\baselineskip}%
\begin{mdframed}[style=principleframe]
\noindent\textbf{Remark: the ``$+1$'' correction and two layers of constants.}

\smallskip\noindent
\textbf{Layer 1 (leading order).}
The threshold $t_n^{*} \asymp \sqrt{\kappa\log n/(4\rhotail)}$ from Lemma~\ref{lem:risk-template} is determined by two quantities: the tail-rate parameter~$\rhotail$ (e.g.\ $\rhotail=1$ for KS) and the prior exponent~$\kappa$.
These are the \emph{mechanism} constants.

\smallskip\noindent
\textbf{Layer 2 (polylogarithmic refinement).}
A Laplace-method evaluation over a thin shell of width $\diff\varepsilon$ near the critical radius introduces a surface-area factor, replacing $\varepsilon_n^{\kappa}$ with $\varepsilon_n^{\kappa}\cdot t_n$.
Because $t_n\sim\sqrt{\log n}$, this contributes an additive $O(\sqrt{\log\log n})$ correction to $t_n^{*}$ without altering the leading coefficient $\kappa/(4\rhotail)$.
Concretely, $t_n^{*} = \sqrt{\kappa\log n/(4\rhotail)} + O(\sqrt{\log\log n})$; the $(\kappa+1)$ exponent governs the polylogarithmic prefactor (see Appendix~\ref{app:proof-ks-mdp}).
\end{mdframed}

\subsection{Extensions to Other Empirical-Process Functionals}\label{subsec:extensions}

The same mechanism applies whenever the null tail is sub-Gaussian and the prior places polynomial mass near the null.

\begin{assumption}[General Functional]\label{ass:general-functional}
The functional $T$ satisfies:
\begin{enumerate}[label=(T\arabic*),leftmargin=2.4em]
\item \textbf{Null moderate tail:} Under $H_0$, $\sqrt{n}\,T_n \Rightarrow T_\infty$ and
$\PP(T_\infty>t)\asymp\exp\{-2\rhotail\, t^2\}$ as $t\to\infty$.
\item \textbf{Local detectability:} If $d(F,F_0)=\varepsilon$, then for thresholds $t=o(\sqrt{n}\varepsilon)$,
$\PP_F(\sqrt{n}\,T_n>t)\to 1$.
\end{enumerate}
\end{assumption}

\begin{proposition}[MDP Scaling for General Functionals]\label{prop:general-mdp}
Under Assumptions~\ref{ass:prior-local} and \ref{ass:general-functional}, any Bayes-risk minimising threshold sequence satisfies
$t_n^* = \Theta(\sqrt{\log n})$, equivalently $\varepsilon_n^*\defeq t_n^*/\sqrt{n} = \Theta(\sqrt{\log n/n})$.
\end{proposition}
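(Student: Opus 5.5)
We argue by a two-sided sandwich on the Bayes risk $B_n(t)=\pi_0L_0\,\bar\alpha_n(t)+\pi_1L_1\,\bar\beta_n(t)$. First we show an \emph{achievability} upper bound: some threshold of order $\sqrt{\log n}$ gives risk of order $(\log n/n)^{\kappa/2}$ up to polylogarithmic factors. Then we show that any threshold sequence with $t_n/\sqrt{\log n}\to 0$ or $\to\infty$ along a subsequence has strictly larger risk, by a polynomial factor in $n$. Together these force $t_n^*=\Theta(\sqrt{\log n})$. The proof mirrors Lemma~\ref{lem:risk-template}, but only uses the weaker qualitative detectability (T2) in place of the exponential bound of Assumption~\ref{ass:detectability}.

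\emph{Upper bound.} Take $t_n=\sqrt{a\log n}$ with $a$ to be fixed. By (T1), transferred from $T_\infty$ to finite $n$ (see below), the Type~I term is $\asymp n^{-2\rho a}$. For the Type~II term, split $\Pi_1$ at radius $\varepsilon=M_n t_n/\sqrt n$ with $M_n\to\infty$ slowly, e.g.\ $M_n=\log\log n$.
\begin{itemize}
\item On the inner ball, the missed-detection probability is at most $1$. Assumption~\ref{ass:prior-local} bounds this contribution by $C_2(M_n^2 a\log n/n)^{\kappa/2}$.
\item On the complement, (T2) applies because $t_n=o(\sqrt n\,\varepsilon)$. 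Hence $\PP_F(\sqrt n T_n\le t_n)\to0$ pointwise, and dominated convergence makes this piece $o(1)$.
\end{itemize}
Here lies the gap: $o(1)$ is not $O((\log n/n)^{\kappa/2})$. I would close it by reading (T2) uniformly over $\{d\ge\varepsilon\}$ with a rate, i.e.\ invoking the exponential form \eqref{eq:detectability} as the quantitative version. This gives $\exp(-c n\varepsilon^2)=n^{-cM_n^2 a}$, which is negligible. Choosing $a=\kappa/(4\rho)$ then yields $B_n^*\lesssim (\log n)^{\kappa/2+o(1)}n^{-\kappa/2}$.

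\emph{Lower bound, small thresholds.} If $t_n\le \sqrt{\eta\log n}$ with $\eta<\kappa/(4\rho)$, the lower half of (T1) gives $B_n\ge \pi_0L_0 c\,n^{-2\rho\eta}$. This is polynomially larger than the achievable rate, so such $t_n$ cannot be minimisers for large $n$. The case $t_n=O(1)$ is covered as well, and there $B_n\not\to 0$.

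\emph{Lower bound, large thresholds.} If $t_n\ge\sqrt{A\log n}$ with $A$ large, the Type~II term is bounded below by the prior mass of alternatives that are not detected. For $F$ with $d(F,F_0)\le \varepsilon=c\,t_n/\sqrt n$, the statistic $\sqrt n T_n$ is stochastically at most $\sqrt n\,d(F,F_0)$ plus a null-type fluctuation. I would use the triangle inequality for sup-type functionals, together with the null tail bound applied at level $t_n/2$, to get $\PP_F(\sqrt nT_n\le t_n)\ge 1/2$. Then $\bar\beta_n\ge \tfrac12 C_1 (c^2A\log n/n)^{\kappa/2}$, which exceeds the achievable rate by the factor $A^{\kappa/2}$.

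To push this from ``bounded constant'' to ``diverging'', suppose $t_n/\sqrt{\log n}\to\infty$ along a subsequence. Then the lower bound on $\bar\beta_n$ is $\gtrsim(t_n^2/n)^{\kappa/2}$, which dominates $(\log n/n)^{\kappa/2}(\log n)^{o(1)}$ by a diverging factor. Combining the two lower-bound cases gives $\liminf t_n^*/\sqrt{\log n}>0$ and $\limsup t_n^*/\sqrt{\log n}<\infty$.

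\emph{Main obstacles.} I expect two difficulties. The first is the finite-$n$ transfer in (T1): weak convergence alone does not control $\PP(\sqrt nT_n>t)$ at $t\sim\sqrt{\log n}$. For KS this transfer is supplied by the DKW/moderate-deviation bounds \eqref{eq:kolmogorov-tail-2sided}. In general I would state it as a uniform tail hypothesis implicit in (T1). The second is the quantitative uniformity of (T2) used in the upper bound. I would handle both by reading Assumption~\ref{ass:general-functional} in the quantitative sense of Assumption~\ref{ass:detectability} and the tail convention \eqref{eq:tail-convention}. With that reading, the rest is the balancing already carried out in Lemma~\ref{lem:risk-template}.
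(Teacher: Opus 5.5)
Your route is the paper's own: Type~I from (T1), Type~II from the local prior mass at radius $t_n/\sqrt n$, then exclusion of thresholds that are too small or too large. You are more careful than the paper's sketch, which handles only $t_n=O(1)$ on the low side and leaves the comparison with an achievable rate implicit. The quantitative finite-$n$ readings of (T1)--(T2) you adopt are exactly what the sketch uses tacitly when it writes $\alpha_n\asymp e^{-2\rhotail t_n^2}$ and $\bar\beta_n\asymp (t_n/\sqrt n)^{\kappa}$. Two steps nevertheless fail as written.

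\emph{The $M_n$ slack.} With $M_n=\log\log n$, your achievable risk carries a factor $M_n^{\kappa}$. The lower bound $(c\,t_n/\sqrt n)^{\kappa}$ exceeds $M_n^{\kappa}(\log n/n)^{\kappa/2}$ only once $t_n/\sqrt{\log n}\gtrsim M_n$. A sequence with $t_n/\sqrt{\log n}\to\infty$ more slowly is therefore not excluded, and you only obtain $t_n^*=O(\sqrt{\log n}\,\log\log n)$. Since you already invoke \eqref{eq:detectability}, take $M$ a fixed constant with $M\ge 1/c_d$ and $(c_dM^2-C_d)\,a\ge\kappa/2$. The outer piece is then $O(n^{-\kappa/2})$, and the achievable risk is $O((\log n/n)^{\kappa/2})$ with no loss. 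Your fixed-large-$A$ argument alone then gives $\limsup_n t_n^*/\sqrt{\log n}<\infty$; the ``diverging factor'' step is unnecessary.

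\emph{The non-detection bound.} The bound $\PP_F(\sqrt n\,T_n\le t_n)\ge 1/2$ for $d(F,F_0)\le c\,t_n/\sqrt n$ does not follow from Assumption~\ref{ass:general-functional}. (T2) only says that far alternatives are detected, and (T1) controls fluctuations only under $F_0$. Your triangle-inequality argument needs $T_n=d(\hat\pi_n,F_0)$ for a metric $d$, together with a bound on $\PP_F(\sqrt n\,d(\hat\pi_n,F)>t/2)$ uniform over near-null $F$. For KS this is the DKW inequality, but for a general functional it is an extra hypothesis, and a necessary one.

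For instance, take $F_0$ uniform on $(0,1)$. Let $\Pi_1$ be carried by $F_\varepsilon=(1-\varepsilon)F_0+\varepsilon\delta_{1/2}$, with $\varepsilon$ having density $\propto\varepsilon^{\kappa-1}$ near $0$. Set $T_n=S_n$ if the sample has no ties and $T_n=+\infty$ otherwise. Then (T1), (T2) and Assumption~\ref{ass:prior-local} all hold, yet $\bar\beta_n(t)=O(n^{-\kappa})$ uniformly in $t$. For all large $n$, every $t\ge\sqrt n$ (where $\alpha_n(t)=0$ because $S_n\le 1$) strictly beats every $t\le C\sqrt{\log n}$. So the upper half of $\Theta(\sqrt{\log n})$ fails.

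The paper's sketch hides the same requirement in the lower half of $\bar\beta_n\asymp(t_n/\sqrt n)^{\kappa}$. You should state it explicitly, alongside the two strengthenings you already flagged.
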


\begin{proof}[Proof sketch]
By (T1), $\alpha_n(t_n)\asymp \exp\{-2\rhotail\, t_n^2\}$.
By (T2), the integrated Type~II term is dominated by alternatives with $d(F,F_0)\lesssim t_n/\sqrt{n}$, whose prior mass is $\asymp (t_n/\sqrt{n})^{\kappa}$.
If $t_n=O(1)$, $\alpha_n$ stays bounded away from $0$; if $t_n\gg \sqrt{\log n}$, $\alpha_n$ becomes super-polynomially small while the Type~II term remains polynomial. Thus $t_n$ must be of order $\sqrt{\log n}$.
\end{proof}

\section{Sanov Information Asymptotics and Risk Truncation}
\label{sec:sanov}

This section develops the information-theoretic perspective on MDP scaling. The key insight is that Sanov theory operates at exponential-in-$n$ scales, but Bayes-risk optimization truncates the effective exponent to $O(\log n/n)$, precisely the MDP regime.
In interpretive terms: LDP optimizes exponents; Bayes risk optimizes a mixed criterion $\Rightarrow$ MDP.

\subsection{Sanov's Theorem}

\begin{theorem}[Sanov]\label{thm:sanov}
Let $X_1,\ldots,X_n$ be i.i.d.\ with law $F$ on a Polish space $\calX$.
The empirical measure $\hat{\pi}_n=n^{-1}\sum_{i=1}^n \delta_{X_i}$ satisfies an LDP on $\calP(\calX)$ (weak topology) with speed $n$ and rate function \citep[see][Ch.~11]{CoverThomas2006}
\begin{equation}\label{eq:sanov-rate}
I_F(G)\;=\;\KL(G\|F),
\end{equation}
with $\KL(G\|F)=+\infty$ if $G\not\ll F$.
Specifically, for measurable $A\subseteq\calP(\calX)$,
\begin{equation}\label{eq:sanov-ldp}
-\inf_{G\in A^\circ}\KL(G\|F)
\;\le\;
\liminf_{n\to\infty}\frac1n\log\PP_F(\hat{\pi}_n\in A)
\;\le\;
\limsup_{n\to\infty}\frac1n\log\PP_F(\hat{\pi}_n\in A)
\;\le\;
-\inf_{G\in\bar A}\KL(G\|F).
\end{equation}
\end{theorem}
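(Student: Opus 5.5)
The plan is to prove Sanov's theorem in the classical way: first for finite alphabets by the method of types, then for a general Polish space by lifting through finite partitions (Dawson--Gärtner projective limit) and upgrading with exponential tightness.

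\emph{Step 1: finite alphabet.} Take $\calX$ finite with $|\calX|=m$.
\begin{itemize}
\item The empirical measure $\hat\pi_n$ takes values in the set $\calP_n$ of $n$-types. There are at most $(n+1)^m$ of them.
\item For each type $G\in\calP_n$, the exact identity $\PP_F(X^n=x^n)=\exp\{-n[H(G)+\KL(G\|F)]\}$ holds for every $x^n$ of type $G$.
\item The standard type-class cardinality bounds $(n+1)^{-m}e^{nH(G)}\le|T_G|\le e^{nH(G)}$ then give
\[
(n+1)^{-m}e^{-n\KL(G\|F)}\le\PP_F(\hat\pi_n=G)\le e^{-n\KL(G\|F)}.
\]
\item The upper bound in \eqref{eq:sanov-ldp} follows by summing over $G\in A\cap\calP_n$. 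The polynomial factor $(n+1)^m$ disappears at speed $n$.
\item The lower bound: for $G\in A^\circ$ with $\KL(G\|F)<\infty$, pick types $G_n\to G$ with $G_n\ll F$. Then $G_n\in A$ eventually, and $\KL(G_n\|F)\to\KL(G\|F)$ by continuity of $\KL(\cdot\|F)$ on the simplex of measures absolutely continuous with respect to $F$.
\end{itemize}

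\emph{Step 2: lift to Polish spaces.}
\begin{itemize}
\item For each finite measurable partition $\mathcal{Q}=\{A_1,\ldots,A_m\}$, the pushforward $\hat\pi_n\circ\mathcal{Q}^{-1}$ is the empirical measure of the i.i.d.\ finite-valued variables $\mathcal{Q}(X_i)$. Step~1 gives an LDP with rate $\KL(G_\mathcal{Q}\|F_\mathcal{Q})$.
\item Order the partitions by refinement. The Dawson--Gärtner projective limit theorem then gives an LDP for $\hat\pi_n$ in the projective-limit topology (the $\tau$-topology generated by set evaluations). The rate is $\sup_\mathcal{Q}\KL(G_\mathcal{Q}\|F_\mathcal{Q})$.
\item This supremum equals $\KL(G\|F)$ by the partition (Dobrushin/Gelfand--Yaglom--Perez) characterisation of relative entropy, including the value $+\infty$ when $G\not\ll F$.
\item Since the weak topology is coarser than the $\tau$-topology, the contraction principle (identity map) transfers the LDP to the weak topology, with the same good rate function.
\end{itemize}

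\emph{Step 3: goodness and tightness.}
\begin{itemize}
\item Verify that $\KL(\cdot\|F)$ has compact sublevel sets in the weak topology. Lower semicontinuity comes from the Donsker--Varadhan variational formula $\KL(G\|F)=\sup_{\phi\in C_b}\{\int\phi\,dG-\log\int e^\phi dF\}$.
\item Tightness of sublevel sets: for compact $K$ with $F(K^c)$ small, use $\phi=M\1_{K^c}$ in the variational bound. This gives $G(K^c)\le(\KL(G\|F)+\log(1+e^MF(K^c)))/M$.
\item Alternatively, the LDP can be obtained directly in the weak topology: prove the weak LDP via Cramér/Gärtner--Ellis for the linear functionals $\int\phi\,d\hat\pi_n$, identify the rate by the variational formula, and obtain exponential tightness from the same Chebyshev bound applied to $\sum_i M\1_{K^c}(X_i)$.
\end{itemize}

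The main obstacle is Step~2. Identifying the projective-limit rate with $\KL(G\|F)$, and in particular handling measures that are not countably additive in the algebraic projective limit, needs care. I would side-step this by first proving the LDP in the $\tau$-topology restricted to $\calP(\calX)$ and using exponential tightness from Step~3 to ensure the rate is infinite off probability measures. Since the paper cites \citet{CoverThomas2006} for this classical result, it suffices to present Step~1 in detail and indicate Steps~2--3 with references to \citet{DemboZeitouni1998}.
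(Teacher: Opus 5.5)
The paper does not prove this theorem. It quotes it from \citet{CoverThomas2006} for finite alphabets and, later in the text, from \citet{DemboZeitouni1998} (Thm.~6.2.10) for the Polish case. Your Step~1 is the Cover--Thomas method-of-types argument and is complete. Your overall architecture also matches the Dembo--Zeitouni route: finite partitions, Dawson--G\"artner, the partition formula for $\KL$, passage to the coarser weak topology, and goodness via Donsker--Varadhan.

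The one step you supply yourself would fail. The projective limit of the finite-partition simplices is the space of \emph{finitely additive} probabilities with the topology of setwise convergence. Before you can restrict the LDP to $\calP(\calX)$, you must show that $\sup_{\mathcal{Q}}\KL(G_{\mathcal{Q}}\|F_{\mathcal{Q}})=+\infty$ whenever $G$ is finitely but not countably additive; otherwise the closed-set upper bound need not survive the restriction. Weak-topology exponential tightness cannot give this, because a weakly compact $K\subset\calP(\calX)$ need not be closed in the setwise topology. For example, on $[0,1]$ the set $K=\{\delta_{1/k}\}_{k\ge 1}\cup\{\delta_0\}$ is weakly compact. Yet for a free ultrafilter $\mathcal{U}$, the charge $\nu(A)=\lim_{\mathcal{U}}\1_A(1/k)$ lies in its setwise closure. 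This $\nu$ satisfies $\nu((0,1/j))=1$ for every $j$, so it is not countably additive and is not in $K$. Hence $K^c$ is not open, and the LDP lower bound tells you nothing on it. More bluntly, on a compact $\calX$ tightness is vacuous, yet such charges still exist.

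The missing idea is a direct estimate from two-set partitions. For Borel $A$, the binary entropy is at most $\log 2$ and $F(A^c)\le 1$, so
\[
\KL(G_{\{A,A^c\}}\|F_{\{A,A^c\}})\ \ge\ G(A)\log\bigl(1/F(A)\bigr)-\log 2 .
\]
If the partition supremum is at most $C<\infty$, then $G(A)\le (C+\log 2)/\log\bigl(1/F(A)\bigr)$, so $F(A_k)\to 0$ forces $G(A_k)\to 0$. Take $A_k\downarrow\emptyset$; then $F(A_k)\to 0$ by $\sigma$-additivity of $F$, so $G$ is countably additive, and also $G\ll F$. This places the effective domain inside $\calP(\calX)$. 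After that, the restriction, the Gelfand--Yaglom--Perez identification, and the passage to the weak topology go through as you wrote.

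Alternatively, commit to the other route you sketch. Running G\"artner--Ellis over finite families of $\phi\in C_b(\calX)$ puts the LDP on the algebraic dual $C_b(\calX)'$ with its weak$^*$ topology, whose trace on $\calP(\calX)$ is exactly the weak topology. There, weakly compact subsets of $\calP(\calX)$ \emph{are} compact, hence closed, in the ambient Hausdorff space. So your exponential-tightness argument does force the rate to be $+\infty$ off $\calP(\calX)$, and the Donsker--Varadhan formula then identifies it with $\KL(\cdot\|F)$.
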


\subsection{GOF via Sanov}

A GOF rejection rule corresponds to a measurable set $A_n\subseteq\calP(\calX)$ of empirical measures deemed incompatible with $F_0$. Sanov implies
\begin{equation}\label{eq:sanov-gof}
\PP_{F_0}(\hat{\pi}_n\in A_n)
\;\approx\;
\exp\Bigl\{-n\inf_{G\in A_n}\KL(G\|F_0)\Bigr\},
\end{equation}
where the approximation may include important polynomial prefactors when $A_n$ approaches the null boundary in $n$-dependent ways.

\subsection{Half-Spaces and Exponential Tilting}

Let $H=\{G:\int \phi\,dG\ge 0\}$ with $\E_{F_0}[\phi(X)]<0$.
Define $\Lambda(t)=\log\E_{F_0}[e^{t\phi(X)}]$ and the tilted law $dF_t/dF_0 = e^{t\phi-\Lambda(t)}$.

\begin{proposition}[Rate function for half-spaces]\label{prop:half-space-rate}
\begin{equation}\label{eq:half-space-rate}
\inf_{G\in H}\KL(G\|F_0)
\;=\;
\sup_{t\ge 0}\{-\Lambda(t)\}.
\end{equation}
The optimiser is $G^*=F_{t^*}$, where $t^*$ attains the supremum and satisfies $\E_{F_{t^*}}[\phi(X)]=0$.
\end{proposition}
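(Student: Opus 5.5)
The identity \eqref{eq:half-space-rate} is the standard duality between a KL projection onto a half-space and the Legendre transform of the log-moment generating function. I would prove it as two inequalities, using the tilted family $F_t$ both as a source of lower bounds and as the candidate minimiser. Throughout I assume $\Lambda$ is finite on a neighbourhood of $[0,t^*]$, and that the supremum is attained at an interior point $t^*>0$ where $\Lambda'(t^*)=0$. Since $\Lambda(0)=0$ and $\Lambda'(0)=\E_{F_0}[\phi]<0$, the function $-\Lambda$ increases from $0$ near the origin. Because $\Lambda$ is convex, a stationary point is a global minimiser of $\Lambda$ on $[0,\infty)$.

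\emph{Lower bound.} Fix $G\in H$ with $\KL(G\|F_0)<\infty$ and any $t\ge 0$. The Donsker--Varadhan (Gibbs) variational inequality gives
\[
\KL(G\|F_0)\ \ge\ \int t\phi\,dG-\log\E_{F_0}[e^{t\phi}]\ =\ t\!\int\phi\,dG-\Lambda(t)\ \ge\ -\Lambda(t),
\]
where the last step uses $\int\phi\,dG\ge 0$ and $t\ge 0$. Equivalently, one can write $\KL(G\|F_0)=\KL(G\|F_t)+\int(t\phi-\Lambda(t))\,dG$ and drop the nonnegative term $\KL(G\|F_t)$. Taking the supremum over $t\ge0$ and the infimum over $G\in H$ yields $\inf_H\KL\ge\sup_{t\ge0}\{-\Lambda(t)\}$.

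\emph{Upper bound and attainment.} Take $G^*=F_{t^*}$. Differentiating under the integral, which is justified by finiteness of $\Lambda$ near $t^*$, gives $\E_{F_{t^*}}[\phi]=\Lambda'(t^*)=0$, so $G^*\in H$. Direct computation gives
\[
\KL(F_{t^*}\|F_0)=\E_{F_{t^*}}[t^*\phi-\Lambda(t^*)]=t^*\cdot 0-\Lambda(t^*)=-\Lambda(t^*),
\]
so the infimum is at most $\sup_{t\ge0}\{-\Lambda(t)\}$ and is attained at $G^*$. Moreover, the decomposition in the lower-bound step shows that any $G\in H$ with $\KL(G\|F_0)=-\Lambda(t^*)$ must satisfy $\KL(G\|F_{t^*})=0$, which gives uniqueness of the optimiser.

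\emph{Main obstacle.} The delicate step is the existence and characterisation of $t^*$. If $\Lambda$ is finite only on a bounded interval, or if $\phi$ is bounded with $\operatorname{ess\,sup}\phi=0$, the supremum may be approached only as $t\to\infty$ or at the boundary of the domain, and then no tilted law attains it. I would handle this by stating the attainment claim under the assumption that $\Lambda$ is steep/finite near $t^*$. In the remaining cases I would show that the value identity still holds by approximation: truncate $\phi$, apply the argument above, and pass to the limit using monotone convergence of $\Lambda$ and lower semicontinuity of $\KL$.
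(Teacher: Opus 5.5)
Your proof is correct and follows the paper's appendix argument essentially step for step. The lower bound comes from the tilt decomposition $\KL(G\|F_0)=\KL(G\|F_t)+t\int\phi\,dG-\Lambda(t)$ with $\KL(G\|F_t)\ge 0$, attainment comes from $G^*=F_{t^*}$ with $\Lambda'(t^*)=0$, and your uniqueness observation is a small addition.

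For the degenerate case, your truncation idea is sounder than the paper's sequence argument, whose claim that $F_{t_k}\in H$ eventually cannot hold because $\E_{F_t}[\phi]=\Lambda'(t)<0$ when there is no stationary point. To make it work, truncate from above, $\phi\wedge M\le\phi$, so the truncated half-space lies inside $H$. Then get $\inf_{t\ge 0}\Lambda_M(t)\uparrow\inf_{t\ge 0}\Lambda(t)$ from monotone convergence plus coercivity of $\Lambda_M$, not from lower semicontinuity of $\KL$; the case $\phi\le 0$ a.s.\ has to be checked directly.
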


\begin{proof}
By convex duality,
\[
\inf_{G:\,\int \phi\,dG\ge 0}\KL(G\|F_0)
= \sup_{t\ge 0}\inf_G\bigl\{\KL(G\|F_0)-t{\textstyle\int}\phi\,dG\bigr\}
= \sup_{t\ge 0}\{-\Lambda(t)\},
\]
and the inner infimum is achieved by the exponential tilt.
\end{proof}

\subsection{MDP Truncation under Bayes Risk}\label{subsec:mdp-truncation}

\begin{theorem}[MDP truncation under Bayes risk]\label{thm:mdp-truncation}
Assume:
(i)~$A_\varepsilon=\{G:d(G,F_0)\ge \varepsilon\}$ for a metric $d$ locally equivalent to KL in the sense $\KL(G\|F_0)\asymp 2\rhotail\, d(G,F_0)^2$ for $d(G,F_0)\to 0$, and
(ii)~the prior on alternatives satisfies Assumption~\ref{ass:prior-local}.
Then any Bayes-risk minimising rejection set $A_n^*$ satisfies
\begin{equation}\label{eq:mdp-truncation}
\inf_{G\in A_n^*}\KL(G\|F_0)
\;\asymp\;
\frac{\kappa}{2}\cdot\frac{\log n}{n}.
\end{equation}
\end{theorem}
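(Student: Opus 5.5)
The plan is to reduce Theorem~\ref{thm:mdp-truncation} to the Risk Decomposition Template (Lemma~\ref{lem:risk-template}) by parametrising rejection sets by their radius. First, restrict attention to rejection sets of the form $A_\varepsilon=\{G: d(G,F_0)\ge\varepsilon\}$ as in hypothesis~(i). Then write $\varepsilon = t/\sqrt{n}$, so that $\hat\pi_n\in A_\varepsilon$ is the event $\sqrt{n}\,d(\hat\pi_n,F_0)\ge t$. A Bayes-risk minimising set $A_n^*$ then corresponds to a minimising radius $\varepsilon_n^*=t_n^*/\sqrt n$.

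Next I verify the two inputs of the template.

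\emph{Null tail.} Use Sanov (Theorem~\ref{thm:sanov}) together with the local equivalence $\KL\asymp 2\rhotail\,d^2$. This gives
\[
\PP_{F_0}(\hat\pi_n\in A_\varepsilon)\approx \exp\{-n\inf_{A_\varepsilon}\KL(G\|F_0)\}=\exp\{-2\rhotail\,n\varepsilon^2(1+o(1))\}=\exp\{-2\rhotail t^2(1+o(1))\}.
\]
This is the Convention~\eqref{eq:tail-convention} tail, up to subexponential factors. Since $\varepsilon\to0$, the sets are shrinking, and Sanov at fixed $A$ does not literally apply. Here I would invoke the moderate-deviation principle for the empirical measure at scale $\sqrt{\log n/n}$, with rate given by the quadratic (Fisher/$\chi^2$) approximation of KL. The local equivalence in~(i) is exactly what makes that rate equal $2\rhotail\,d^2$.

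\emph{Alternative side.} Assumption~\ref{ass:prior-local} gives prior mass $\asymp\varepsilon^\kappa$ within distance $\varepsilon$. For alternatives with $d(F,F_0)\gg\varepsilon$, the empirical measure concentrates near $F$ (via a DKW/Sanov upper bound applied under $F$), so missed-detection is exponentially small. This supplies the local detectability needed in Lemma~\ref{lem:risk-template}.

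With both inputs in hand, Lemma~\ref{lem:risk-template} gives $B_n\asymp n^{-2\rhotail a}+(a\log n/n)^{\kappa/2}$ with minimiser $a^*=\kappa/(4\rhotail)$. The optimal radius is therefore $\varepsilon_n^{*2}=a^*\log n/n$. Substituting into the local equivalence yields
\[
\inf_{G\in A_n^*}\KL(G\|F_0)\asymp 2\rhotail\,\varepsilon_n^{*2}=2\rhotail\cdot\frac{\kappa}{4\rhotail}\cdot\frac{\log n}{n}=\frac{\kappa}{2}\cdot\frac{\log n}{n}.
\]
This is \eqref{eq:mdp-truncation}. Polylogarithmic shell corrections, as in Theorem~\ref{thm:ks-mdp}, only perturb $t_n^*$ by $O(\sqrt{\log\log n})$, so they do not change this order.

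The main obstacle is the claim that \emph{any} Bayes-risk minimising set, not just a ball-type $A_\varepsilon$, has KL-boundary at this order. I would handle it with two sandwich bounds.
\begin{itemize}
\item \emph{Lower bound on the boundary.} If $\inf_{A_n^*}\KL\le c\log n/n$ with $c<\kappa/2$, then the Sanov/MDP lower bound at the nearest point $G^*$ of $A_n^*$ (using the tilting of Proposition~\ref{prop:half-space-rate} along the direction toward $G^*$) gives Type~I error $\gtrsim n^{-c-o(1)}$. This exceeds the risk $(\log n/n)^{\kappa/2}$ achieved by $A_{\varepsilon_n^*}$.
\item \emph{Upper bound on the boundary.} If $\inf_{A_n^*}\KL\ge C\log n/n$ with $C$ large, then $A_n^*$ excludes the KL ball of that radius. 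By the local equivalence it excludes a $d$-ball of radius $\sqrt{C\log n/(2\rhotail n)}$. Alternatives inside that ball are then missed with probability bounded below, so the Type~II error is $\gtrsim(C\log n/n)^{\kappa/2}$, which is larger by a constant factor; I would make this precise by comparing constants in the template.
\end{itemize}
Together these pin the KL boundary of $A_n^*$ to order $\log n/n$, with constant $\kappa/2$ inherited from the ball-type analysis.
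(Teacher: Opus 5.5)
Your core reduction is exactly the paper's proof: restrict to the ball family $A_\varepsilon$, take $t_n^*=\sqrt{a^*\log n}$ from Lemma~\ref{lem:risk-template}, set $\varepsilon_n^*=t_n^*/\sqrt n$, and apply the local equivalence $\KL\asymp 2\rhotail\,d^2$ at that radius. The paper does nothing beyond this. It does not verify the template's tail and detectability inputs for $d(\hat\pi_n,F_0)$, and it silently identifies $A_n^*$ with $A_{\varepsilon_n^*}$.

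The step you single out as the main obstacle is where your proposal fails. The lower half of your sandwich claims that if some $G^*\in A_n^*$ has $\KL(G^*\|F_0)\le c\log n/n$, then $\PP_{F_0}(\hat\pi_n\in A_n^*)\gtrsim n^{-c-o(1)}$. Tilting toward $G^*$ (Proposition~\ref{prop:half-space-rate}) lower-bounds the null probability of a half-space through $G^*$, not of $A_n^*$. Nothing forces $A_n^*$ to contain that half-space or any thick neighbourhood of $G^*$.

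In fact the statement for \emph{arbitrary} minimisers is false. The empirical measure $\hat\pi_n$ is always discrete, with atom masses in $\{1/n,2/n,\dots,1\}$. Take any $G^*$ not of this form, for instance absolutely continuous when $F_0$ is, or with irrational cell probabilities in the multinomial case. Then $A_{\varepsilon_n^*}\cup\{G^*\}$ has exactly the same Type~I and Type~II errors as $A_{\varepsilon_n^*}$, so it is also a minimiser. Yet its $\inf_G\KL(G\|F_0)$ can be made arbitrarily small by taking $G^*$ close to $F_0$.

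So you cannot prove the ``any set'' version. You have two options:
\begin{itemize}
\item state the result within the family $\{A_\varepsilon\}$ of hypothesis~(i), as the paper implicitly does; or
\item impose a condition that makes your tilting step valid, such as convexity of the acceptance region $(A_n^*)^{c}$. A supporting or separating half-space at the KL-nearest point of $\overline{A_n^*}$ then lies inside $A_n^*$. A moderate-deviation lower bound for that half-space, uniform over the $n$-dependent direction $\phi$, then transfers to $A_n^*$.
\end{itemize}

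Separately, the upper half of your sandwich only yields the order $\log n/n$, not the constant $\kappa/2$. This is harmless under the paper's $\asymp$ convention, but the constant is not inherited by that argument.
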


Theorem~\ref{thm:mdp-truncation} is the Sanov-side expression of the MDP Principle, and is a direct application of Lemma~\ref{lem:risk-template} with the KL metric playing the role of the distance~$d$.
The distinction is: \emph{LDP optimises exponents}; \emph{Bayes risk optimises the exponent weighted by prior mass}.
This weighting truncates the effective KL exponent from $O(1)$ down to $O(\log n/n)$, precisely the MDP regime.

\begin{proof}
By assumption~(i), $\KL(G\|F_0)\asymp 2\rhotail\,d(G,F_0)^2$ locally.
The Bayes-risk optimal threshold from Theorem~\ref{thm:mdp-generic} is $t_n^*=\sqrt{a^*\log n}$ with $a^*=\kappa/(4\rhotail)$, and the critical distance scale is $\varepsilon_n^*=t_n^*/\sqrt{n}=\sqrt{a^*\log n/n}$.
The two-sided local equivalence at this distance gives
\[
\inf_{G\in A_n^*}\KL(G\|F_0)
\;\asymp\;
2\rhotail\,(\varepsilon_n^*)^2
\;=\;
2\rhotail\cdot\frac{a^*\log n}{n}
\;=\;
\frac{\kappa}{2}\cdot\frac{\log n}{n},
\]
which is \eqref{eq:mdp-truncation}.
The two-sided condition $\KL\asymp 2\rhotail\,d^2$ holds whenever $d$ is the Fisher geodesic distance and $F_0$ is an interior point of a smooth parametric model, since $\KL(G\|F_0) = \frac{1}{2}d_F(G,F_0)^2(1+o(1))$ locally.
\end{proof}

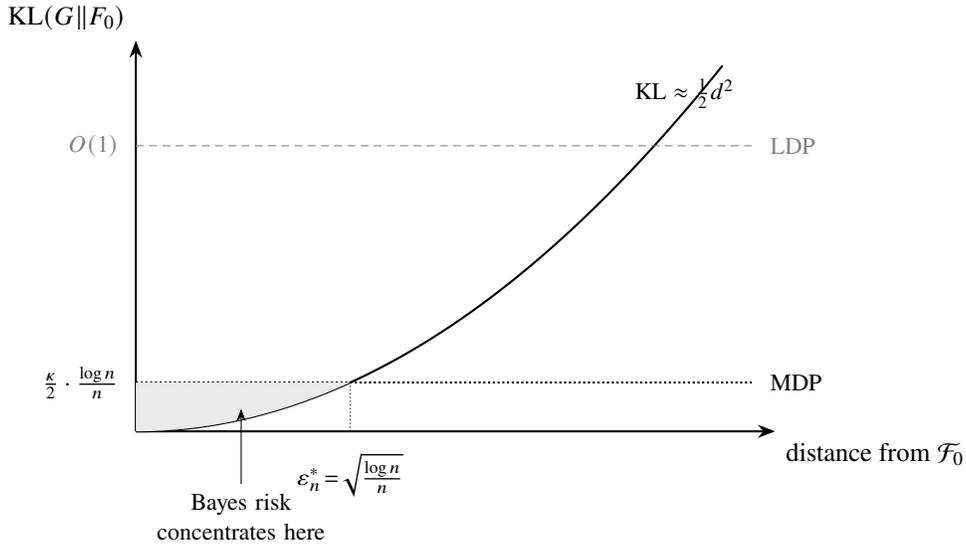
\begin{figure}[ht]
\centering
\begin{tikzpicture}[>=Stealth, font=\small]
  \draw[->, thick] (0,0) -- (8.5,0) node[anchor=north west] {distance from $\calF_0$};
  \draw[->, thick] (0,0) -- (0,5.2) node[anchor=south east] {$\KL(G\|F_0)$};
  \draw[thick, black] plot[smooth, domain=0:7.8, samples=60]
    ({\x}, {0.08*\x*\x});
  \node[anchor=west, font=\footnotesize] at (6.5,4.5) {$\KL \approx \tfrac{1}{2}d^2$};
  \draw[gray, densely dashed] (0,3.8) -- (8.2,3.8);
  \node[anchor=east, font=\footnotesize, gray] at (-0.1,3.8) {$O(1)$};
  \node[anchor=west, font=\footnotesize, gray] at (8.3,3.8) {LDP};
  \draw[black, thick, densely dotted] (0,0.65) -- (8.2,0.65);
  \node[anchor=east, font=\footnotesize] at (-0.1,0.65) {$\frac{\kappa}{2}\cdot\frac{\log n}{n}$};
  \node[anchor=west, font=\footnotesize] at (8.3,0.65) {MDP};
  \fill[black!8] plot[smooth, domain=0:2.85, samples=40]
    ({\x}, {0.08*\x*\x}) -- (2.85,0.65) -- (0,0.65) -- cycle;
  \draw[densely dotted] (2.85,0) -- (2.85,0.65);
  \node[anchor=north, font=\footnotesize] at (2.85,-0.15) {$\varepsilon_n^{*}\!=\!\sqrt{\tfrac{\log n}{n}}$};
  \draw[<-, thin] (1.4,0.3) -- (1.4,-0.7) node[anchor=north, font=\footnotesize, text width=3cm, align=center] {Bayes risk\\concentrates here};
\end{tikzpicture}
\caption{KL truncation geometry.  LDP optimises the full exponent (dashed); Bayes risk truncates to the $O(\log n/n)$ shell (dotted), where prior mass and detectability compete.  The shaded region is the MDP-active zone.}\label{fig:kl-truncation}
\end{figure}

\begin{remark}[Discrete nulls and leading $O(\log n)$ corrections]\label{rem:discrete-correction}
When $F_0$ is discrete with $k$ atoms, the probability of a type class carries a polynomial prefactor:
$\PP_{F_0}(\hat{\pi}_n\in A) = \exp\{-n I(A)\}\,n^{(k-1)/2}\,\{1+o(1)\}$.
The $O(\log n)$ term becomes leading-order exactly when $I(A)=O(\log n/n)$, that is, in the MDP truncation regime.
In discrete settings, this polynomial prefactor produces an $O(\log n)$ shift of the effective decision boundary (see Section~\ref{subsec:multinomial} and Appendix~\ref{app:triangulation}).
\end{remark}

\subsection{Information-Theoretic Interpretation}

Define the \emph{distinguishability radius} at level $\alpha$:
\begin{equation}\label{eq:distinguishability-radius}
r_n(\alpha)
\;\defeq\;
\inf\Bigl\{\varepsilon:\ \PP_{F_0}\bigl(d(\hat{\pi}_n,F_0)\ge \varepsilon\bigr)\le \alpha\Bigr\}.
\end{equation}

\begin{proposition}[Distinguishability radii across regimes]\label{prop:dist-radius}
If $\alpha_n=n^{-c}$ is polynomial, then $r_n(\alpha_n)=\Theta(\sqrt{\log n/n})$.
If $\alpha_n=e^{-cn}$ is exponential, then $r_n(\alpha_n)=\Theta(1)$.
\end{proposition}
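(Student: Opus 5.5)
We prove the two claims of Proposition~\ref{prop:dist-radius} by inverting the null tail of $\sqrt{n}\,d(\hat\pi_n,F_0)$ at the two levels. Write $\bar G_n(\varepsilon)\defeq\PP_{F_0}(d(\hat\pi_n,F_0)\ge\varepsilon)$. This function is nonincreasing in $\varepsilon$, so $r_n(\alpha)$ is its generalised inverse at level $\alpha$. It therefore suffices to exhibit constants $0<c_1<c_2$ with $\bar G_n(c_2\,\rho_n)\le\alpha_n\le\bar G_n(c_1\,\rho_n)$ for large $n$, where $\rho_n$ is the claimed scale. For concreteness we take $d$ to be the KS metric, or any metric for which the Gaussian-type tail convention \eqref{eq:tail-convention} and the Sanov LDP (Theorem~\ref{thm:sanov}) are available.

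\emph{Polynomial case.} Set $\varepsilon=t/\sqrt{n}$ with $t=\sqrt{b\log n}$. The upper bound uses a non-asymptotic sub-Gaussian inequality of DKW type,
\[
\bar G_n(t/\sqrt{n})\le C e^{-2\rhotail t^2},
\]
valid uniformly in $n$ and $t$ (for KS this is Massart's DKW bound with $\rhotail=1$). This gives $\bar G_n\le Cn^{-2\rhotail b}$, which is below $n^{-c}$ once $b>c/(2\rhotail)$. Hence $r_n(n^{-c})\le\sqrt{b\log n/n}$.

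For the lower bound we need $\bar G_n(\sqrt{b'\log n/n})\ge n^{-c}$ for some small $b'>0$. Here the weak limit \eqref{eq:kolmogorov-tail-2sided} is not enough, because $t\to\infty$ with $n$. The step I expect to be the main obstacle is securing a lower tail bound that holds uniformly in the moderate-deviation range $t=O(\sqrt{\log n})$. I would obtain it by reducing to a single coordinate. For KS,
\[
\bar G_n(\varepsilon)\ge\PP_{F_0}\bigl(F_n(s)-F_0(s)\ge\varepsilon\bigr)
\]
for a fixed $s$ with $F_0(s)=1/2$. The right-hand side is a binomial tail, and the classical moderate-deviation theorem for sums (Cram\'er/Petrov: the Gaussian approximation holds uniformly for $t=o(n^{1/6})$) gives a lower bound $\ge c\,t^{-1}e^{-2t^2(1+o(1))}$. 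Taking $b'<c/2$ yields $\bar G_n\ge n^{-c}$ for large $n$. Hence $r_n(n^{-c})\ge\sqrt{b'\log n/n}$, and the two bounds together give $\Theta(\sqrt{\log n/n})$.

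\emph{Exponential case.} Here Sanov's theorem replaces the Gaussian approximation. Define $I(\varepsilon)\defeq\inf\{\KL(G\|F_0):d(G,F_0)\ge\varepsilon\}$. This function is positive for $\varepsilon>0$, nondecreasing, tends to $0$ as $\varepsilon\downarrow0$ (by local equivalence $\KL\asymp 2\rhotail d^2$), and is continuous on $(0,\varepsilon_{\max})$.

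For the upper bound, pick $\varepsilon_2$ with $I(\varepsilon_2)>c$; the LDP upper bound \eqref{eq:sanov-ldp} on the closed set $\{d\ge\varepsilon_2\}$ gives $\bar G_n(\varepsilon_2)\le e^{-n(c+\eta)}<e^{-cn}$ for large $n$. For the lower bound, pick $\varepsilon_1>0$ with $I(\varepsilon_1)<c$, using continuity at $0$ and open-set approximation so that the interior infimum is also $<c$; the LDP lower bound gives $\bar G_n(\varepsilon_1)\ge e^{-n(c-\eta)}>e^{-cn}$. Hence $\varepsilon_1\le r_n(e^{-cn})\le\varepsilon_2$, which is $\Theta(1)$. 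The case where $c$ exceeds $\sup I$ is excluded, since then the level $e^{-cn}$ is eventually unattainable and $r_n$ is degenerate; I would state this as a standing restriction on $c$.
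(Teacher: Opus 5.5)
Your proof is correct, and it is more careful than the paper's. The paper simply inverts the tail convention \eqref{eq:tail-convention}: it sets $r_n(\alpha)=t_n/\sqrt{n}$ with $e^{-2\rhotail t_n^2}\asymp\alpha$ and reads off $2\rhotail t_n^2=c\log n$ or $2\rhotail t_n^2=cn$.

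That inversion has two weaknesses. First, it tacitly assumes the Gaussian tail holds uniformly in $n$ along $t=t_n\to\infty$, which, as you note, a statement about the limit law does not provide. Second, in the exponential case it applies the Gaussian exponent at $t\asymp\sqrt{n}$, where the correct exponent is $nI(\varepsilon)$ rather than $2\rhotail n\varepsilon^2$. The $\Theta(1)$ conclusion survives, but the implied radius $\sqrt{c/(2\rhotail)}$ is right only to leading order as $c\to0$; for KS with $c\ge2$ it even exceeds the largest possible distance $1$.

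You supply exactly what is missing. Massart's inequality handles the upper side. On the lower side, your one-coordinate binomial reduction plus Cram\'er's moderate-deviation theorem is the right tool, since a method-of-types lower bound would lose a polynomial prefactor that is fatal for small $c$. Sanov with the true rate $I(\varepsilon)$ handles the exponential case. Because your upper and lower constants pinch at $b=c/2$, you in fact get the sharp asymptotic $r_n(n^{-c})\sim\sqrt{(c/2)\log n/n}$ for KS.

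The paper's version buys brevity and nominal generality over every statistic obeying the convention. Yours is tied to KS, and extending it to other functionals needs uniform-in-$n$ moderate-deviation and LDP bounds as hypotheses, not just the limit-law tail.

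Finally, your standing restriction on $c$ is unnecessary for KS. By data processing on a single coordinate, $I(\varepsilon)\ge\varepsilon\log\frac{1}{1-\varepsilon}-\log 2\to\infty$ as $\varepsilon\uparrow1$, so for every $c>0$ you can pick $\varepsilon_2<1$ with $I(\varepsilon_2)>c$.
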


\begin{proof}
For the statistics under consideration, $d(\hat{\pi}_n, F_0) = T_n$ where $T_n$ is a functional of the empirical measure satisfying the sub-Gaussian tail convention \eqref{eq:tail-convention}: $\PP_{F_0}(\sqrt{n}\,T_n > t) \asymp \exp\{-2\rhotail\,t^2\}$ (e.g., $T_n = S_n$ for KS).
The distinguishability radius is therefore $r_n(\alpha) = t_n/\sqrt{n}$ where $t_n$ solves $\exp\{-2\rhotail\,t_n^2\}\asymp\alpha$.
Setting $\alpha_n = n^{-c}$: $2\rhotail\,t_n^2 = c\log n$, so $t_n = \Theta(\sqrt{\log n})$ and $r_n = \Theta(\sqrt{\log n/n})$.
Setting $\alpha_n = e^{-cn}$: $2\rhotail\,t_n^2 = cn$, so $t_n = \Theta(\sqrt{n})$ and $r_n = \Theta(1)$.
\end{proof}

The MDP regime is therefore the regime in which the test resolves distributions at radius $\sqrt{\log n/n}$: shrinking with $n$, but slowly enough that the Bayes integral over near-null alternatives remains the dominant contribution to risk.

\section{Applications}
\label{sec:apps}

Each application below follows the same template dictated by the Bayes-Risk MDP Principle:
(1)~specify the model and GOF statistic;
(2)~decompose the Bayes risk;
(3)~identify the truncation/moderate-deviation regime;
(4)~conclude the threshold scaling.

\subsection{Location Testing under Laplace Families}\label{subsec:laplace}

\subsubsection{Model and GOF statistic}

Let $X_1,\ldots,X_n$ be i.i.d.\ with Laplace density
\begin{equation}\label{eq:laplace-density}
  f(x\mid\theta) = \frac12\exp\{-|x-\theta|\}, \qquad x\in\R,\ \theta\in\R,
\end{equation}
with known scale and unknown location $\theta$. Test $H_0:\theta=0$ versus $H_1:\theta>0$.

Three test statistics will be compared.
For the \emph{sign test}, let $V_n\defeq\sum_{i=1}^n\1\{X_i>0\}$; under $H_0$, $V_n\sim\mathrm{Bin}(n,1/2)$, and under $\theta>0$,
\begin{equation}\label{eq:laplace-p-pos}
  p(\theta)\defeq\PP_\theta(X>0) = 1-\tfrac12 e^{-\theta}.
\end{equation}
For the \emph{median test}, let $M_n$ be the sample median; for Laplace location, $\sqrt{n}(M_n-\theta)\Rightarrow N(0,1)$.
For the \emph{likelihood ratio test}, the log-likelihood is $\ell_n(\theta)=-n\log 2-\sum_{i=1}^n|X_i-\theta|$, and the LRT statistic is
\begin{equation}\label{eq:laplace-lrt}
  \Lambda_n = \exp\Bigl\{-\sum_{i=1}^n|X_i-\hat\theta_n|+\sum_{i=1}^n|X_i|\Bigr\},
\end{equation}
where $\hat\theta_n=\max\{0,\mathrm{median}(X)\}$.

\subsubsection{Bayes risk decomposition}

\begin{definition}[Bahadur slope {\citep{Bahadur1960, Sievers1969}}]\label{def:bahadur-slope}
The \emph{Bahadur (exact) slope} at alternative $\theta$ is
$c_T(\theta) \defeq -\lim_{n\to\infty}(2/n)\log\PP_{0}(T_n\ge t_n(\theta))$.
\end{definition}

\begin{proposition}[Reference slopes for Laplace location]\label{prop:laplace-bahadur}
Consider $H_0:\theta=0$ vs $H_1:\theta>0$.
The Bahadur slopes are:
$c_{\mathrm{sign}}(\theta) = 2\,\KL(\mathrm{Ber}(p(\theta))\,\|\,\mathrm{Ber}(1/2))$ for the sign test,
$c_{\mathrm{med}}(\theta)\approx \theta^2$ locally for the median test, and
$c_{\mathrm{LRT}}(\theta) = 2\,\KL(f_{\theta}\,\|\,f_0) = 2(e^{-\theta}+\theta-1)$ for the LRT.
\end{proposition}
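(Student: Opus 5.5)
The three slopes come from one mechanism. For a statistic that is (a function of) an i.i.d. average, the Bahadur slope is twice the Cramér/Sanov rate at the statistic's almost-sure limit under $\theta$. Concretely, if $T_n\to b(\theta)$ $\PP_\theta$-a.s. and $-\frac1n\log\PP_0(T_n\ge t)\to J(t)$ with $J$ continuous near $b(\theta)$, then $c_T(\theta)=2J(b(\theta))$. This is Bahadur's theorem. I will therefore, for each test, identify the a.s. limit $b(\theta)$, compute the null large-deviation rate $J$ from Cramér's theorem or Sanov (Theorem~\ref{thm:sanov}, with the half-space formula of Proposition~\ref{prop:half-space-rate}), and evaluate at $b(\theta)$.

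\emph{Sign test.} Take $T_n=V_n/n$. Under $\theta$, $T_n\to p(\theta)$ a.s. by the SLLN. Under $H_0$, $V_n\sim\mathrm{Bin}(n,1/2)$, and Cramér's theorem (equivalently, Sanov on the two-point space $\{X>0\},\{X\le 0\}$) gives $J(t)=\KL(\mathrm{Ber}(t)\|\mathrm{Ber}(1/2))$ for $t>1/2$. Since $p(\theta)>1/2$ for $\theta>0$, we obtain $c_{\mathrm{sign}}=2\KL(\mathrm{Ber}(p(\theta))\|\mathrm{Ber}(1/2))$.

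\emph{Median test.} Here $M_n\to\theta$ a.s. The event $\{M_n\ge t\}$ equals $\{\#\{X_i\ge t\}\ge n/2\}$ up to rounding, which is again a binomial tail. This gives the exact rate $J(t)=\KL(\mathrm{Ber}(1/2)\|\mathrm{Ber}(q(t)))$ with $q(t)=\PP_0(X\ge t)=\tfrac12e^{-t}$. Note that this exact slope coincides with a different KL than the sign test's. Expanding for small $t$ gives $J(t)=\tfrac12 t^2+O(t^3)$, since $\mathrm{Var}$-scaling $\sqrt n M_n\Rightarrow N(0,1)$ gives curvature $1/2$. Hence $c_{\mathrm{med}}(\theta)\approx\theta^2$ locally. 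The Gaussian-curvature statement is all the proposition claims, so the local expansion suffices.

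\emph{LRT.} I use the log-likelihood ratio normalized by $n$: $T_n=\frac1n\log\Lambda_n$ (or the fixed-alternative version $\frac1n\sum\log f_\theta/f_0$). By the SLLN and uniform convergence of $\frac1n\sum|X_i-\vartheta|$ in $\vartheta$, $T_n\to\KL(f_\theta\|f_0)$ a.s. under $\theta$. For the null rate I invoke the classical Bahadur result that the LR statistic has null rate $J(t)=t$. The upper bound comes from a Chernoff/union bound over a grid of $\vartheta$, because $\PP_0(\sum\log f_\vartheta/f_0\ge nt)\le e^{-nt}$ by Markov, and the lower bound comes from Sanov with the tilt $f_\vartheta$. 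This yields $c_{\mathrm{LRT}}=2\KL(f_\theta\|f_0)$. It remains to compute $\KL(f_\theta\|f_0)=\E_\theta[|X|-|X-\theta|]$. With $Y=X-\theta\sim$ Laplace$(0)$, $\E|Y+\theta|-\E|Y|=\theta+e^{-\theta}-1$, using $\E|Y+\theta|=\theta+e^{-\theta}$ for $\theta>0$. Hence $c_{\mathrm{LRT}}=2(e^{-\theta}+\theta-1)$.

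The main obstacle is the LRT null rate. The supremum over $\vartheta$ must not cost more than a subexponential factor. I would handle this with a finite $\delta$-net in $\vartheta$ on a compact range, together with Lipschitz continuity of $\vartheta\mapsto\sum|X_i-\vartheta|$ (constant $n$). The range $\hat\theta_n$ large has superexponentially small probability, which would be controlled separately. The other two slopes are routine binomial Cramér computations.
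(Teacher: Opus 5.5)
Your proof is correct. For the sign test and the LRT it follows the paper's route: slope equals twice the null large-deviation rate at the almost-sure limit, with rate $\KL(\mathrm{Ber}(t)\|\mathrm{Ber}(1/2))$ for $V_n/n$, and rate $J(t)=t$ evaluated at $\KL(f_\theta\|f_0)$ for $n^{-1}\log\Lambda_n$. For the LRT the difference is only one of detail. The paper simply asserts $-\frac1n\log\PP_0(\log\Lambda_n\ge n\KL(f_\theta\|f_0))\to\KL(f_\theta\|f_0)$. You justify it: the upper bound comes from $\E_0[f_\vartheta/f_0]=1$ plus a $\delta$-net using $n$-Lipschitz dependence on $\vartheta$. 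The lower bound comes from Cram\'er for the bounded log-ratio $|X|-|X-\vartheta|$ at the $\vartheta$ with $\KL(f_\vartheta\|f_0)=t$. That is the step the paper skips. One small precision: for a fixed cutoff $K$ the event $\{M_n\ge K\}$ is only exponentially rare, with rate about $K/2$, not superexponentially. This still suffices once $K$ is chosen so that this rate exceeds $t$.

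The genuinely different part is the median. The paper argues that the median is the MLE with asymptotic variance $1/(4f(0)^2)=1$, hence its slope agrees locally with the LRT's. This is a Pitman-type heuristic, since CLT variance does not by itself determine large-deviation curvature. You instead use the exact representation $\{M_n\ge t\}=\{\mathrm{Bin}(n,\tfrac12e^{-t})\ge n/2\}$ (up to rounding). This yields the exact slope $c_{\mathrm{med}}(\theta)=2\KL(\mathrm{Ber}(1/2)\|\mathrm{Ber}(\tfrac12e^{-\theta}))=\theta-\log(2-e^{-\theta})$. Your route buys a rigorous proof of the local claim and a nonlocal formula. For instance, it shows $c_{\mathrm{med}}=\theta^2-\theta^3+O(\theta^4)$ while $c_{\mathrm{LRT}}=\theta^2-\theta^3/3+O(\theta^4)$, so the paper's ``coincides locally'' holds only at second order. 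Do drop your phrase ``since $\sqrt n M_n\Rightarrow N(0,1)$ gives curvature $1/2$,'' which is the same heuristic. Justify the expansion by Taylor-expanding the closed form $J(t)=\tfrac t2-\tfrac12\log(2-e^{-t})$, which gives $\tfrac12t^2+O(t^3)$ directly.
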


\begin{proof}
\emph{Sign test.}
Under $\theta>0$, $V_n/n\to p(\theta)>1/2$ a.s.\ The $p$-value is $\PP_0(V_n\ge np(\theta))$.
By Sanov's theorem for Bernoulli trials, $-\frac{1}{n}\log\PP_0(V_n\ge np)\to \KL(\mathrm{Ber}(p)\|\mathrm{Ber}(1/2))$, giving $c_{\mathrm{sign}}(\theta)=2\KL(\mathrm{Ber}(p(\theta))\|\mathrm{Ber}(1/2))$ \citep[see][]{Bahadur1960}.
\emph{LRT.}
The log-likelihood ratio satisfies $n^{-1}\log\Lambda_n\to \KL(f_\theta\|f_0)$ a.s.\ under $\theta$.
The $p$-value tail is $-\frac{1}{n}\log\PP_0(\log\Lambda_n\ge n\KL(f_\theta\|f_0))\to \KL(f_\theta\|f_0)$, so $c_{\mathrm{LRT}}(\theta)=2\KL(f_\theta\|f_0)=2(e^{-\theta}+\theta-1)$ where the KL for Laplace follows by direct integration.
\emph{Median test.}
The sample median is the MLE for Laplace location with asymptotic variance $1/(4f(0)^2)=1$ \citep{HajekSidak1967}; its Bahadur slope coincides with the LRT slope locally.
\end{proof}

\begin{remark}[Local comparison]
Locally, $c_{\mathrm{LRT}}(\theta)=\theta^2+o(\theta^2)$ and $c_{\mathrm{sign}}(\theta)=\theta^2+o(\theta^2)$.
The sign test is locally Bahadur-efficient for the Laplace family, reflecting $2f_0(0)=1$.
\end{remark}

\subsubsection{Invoking the template}

Assume a one-sided prior with Gamma-type behaviour near zero:
\begin{equation}\label{eq:laplace-prior}
  \pi_1(\theta)\ \propto\ \theta^{\lambda-1}e^{-\gamma\theta},\qquad \theta>0,
\end{equation}
so $\Pi_1([0,\varepsilon])\asymp \varepsilon^{\lambda}$ ($\kappa = \lambda$).
Under $H_0$, $\alpha_n \asymp \exp\{-z_n^2/2\}$, matching Convention~\eqref{eq:tail-convention} with $\rhotail = 1/4$ (since $z_n^2/2 = 2\cdot(1/4)\cdot z_n^2$).
Lemma~\ref{lem:risk-template} yields $a^{*} = \kappa/(4\rhotail) = \lambda$ and $z_n = \sqrt{\lambda\log n}$.

\subsubsection{Threshold scaling}

\begin{theorem}[Bayes-optimal MDP threshold for the sign test]\label{thm:laplace-sign-threshold}
Under the prior \eqref{eq:laplace-prior} with local exponent $\lambda$, the Bayes-risk minimising sign-test threshold has the form
\begin{equation}\label{eq:laplace-sign-threshold}
  V_n\ \text{rejects}\ H_0\ \text{if}\ \ V_n\ge \frac{n}{2}+\frac{1}{2}\sqrt{\lambda\,n\log n}\,\bigl(1+o(1)\bigr).
\end{equation}
Equivalently, $z_n=\sqrt{\lambda\log n}\,(1+o(1))$.
The Type~I error decays polynomially as $\alpha_n^* \asymp n^{-\lambda/2}\times\mathrm{polylog}(n)$.
The critical alternative scale is $\theta\asymp \sqrt{\log n/n}$.
\end{theorem}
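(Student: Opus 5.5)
The plan is to instantiate Lemma~\ref{lem:risk-template} for the standardised sign statistic $Z_n\defeq (V_n-n/2)/(\sqrt{n}/2)$. The test rejects when $Z_n>z_n$, and $z_n=\sqrt{a\log n}$ is the quantity to optimise. Two inputs have to be checked: the null tail constant $\rhotail=1/4$ and the local prior exponent $\kappa=\lambda$ in the parameter metric $d(\theta,0)=\theta$. Once these are in hand, Lemma~\ref{lem:risk-template} gives $a^*=\lambda$. Undoing the standardisation, $V_n\ge n/2+\tfrac12\sqrt{n}\,z_n$, then yields \eqref{eq:laplace-sign-threshold}.

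\emph{Step 1 (Type~I tail).} Under $H_0$ we have $V_n\sim\mathrm{Bin}(n,1/2)$. For $z=O(\sqrt{\log n})$, so that $z=o(n^{1/6})$, Cram\'er's moderate-deviation theorem for Bernoulli sums gives
\[
\PP_0(Z_n>z)=\bar\Phi(z)\bigl(1+o(1)\bigr)\asymp z^{-1}e^{-z^2/2}.
\]
This is the convention \eqref{eq:tail-convention} with $2\rhotail=1/2$. It holds up to the polynomial prefactor $z^{-1}$, and that prefactor is the source of the $\mathrm{polylog}(n)$ factor in $\alpha_n^*$.

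\emph{Step 2 (detectability and prior mass).} From \eqref{eq:laplace-p-pos}, $p(\theta)-1/2=\tfrac12(1-e^{-\theta})=\tfrac12\theta(1+O(\theta))$. Under $\theta$, the mean of $Z_n$ is therefore $\sqrt{n}\,\theta(1+O(\theta))$ and its variance is $1-O(\theta^2)$. Hoeffding's inequality then gives, for $\sqrt{n}\,\theta\ge z+s$,
\[
\PP_\theta(Z_n\le z)\le \exp\{-c s^2\}.
\]
So alternatives with $\theta\gg z_n/\sqrt{n}$ are detected with exponentially small miss probability, and those with $\theta\le (1-\eta)z_n/\sqrt n$ are missed with probability tending to one. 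This verifies hypothesis~(ii) of Theorem~\ref{thm:mdp-generic}. For the prior, integrating \eqref{eq:laplace-prior} gives $\Pi_1([0,\varepsilon])\asymp\varepsilon^\lambda$, so $\kappa=\lambda$. Splitting $\bar\beta_n$ at $\varepsilon_n=z_n/\sqrt n$ exactly as in the proof of Lemma~\ref{lem:risk-template} gives $\bar\beta_n\asymp(z_n^2/n)^{\lambda/2}$. The transition shell of width $O(1/\sqrt n)$ adds only lower-order corrections.

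\emph{Step 3 (optimisation).} We need to minimise
\[
B_n(z)\asymp z^{-1}e^{-z^2/2}+(z^2/n)^{\lambda/2}.
\]
Lemma~\ref{lem:risk-template} gives $z_n^2=\lambda\log n\,(1+o(1))$. To control the $o(1)$, note that the first-order condition $e^{-z^2/2}\asymp z^{\lambda}n^{-\lambda/2}\cdot\mathrm{poly}(z)$ gives $z_n^2=\lambda\log n+O(\log\log n)$. Hence $z_n=\sqrt{\lambda\log n}\,(1+o(1))$. Substituting back gives $\alpha_n^*\asymp n^{-\lambda/2}\,\mathrm{polylog}(n)$, and the critical scale is $\theta\asymp z_n/\sqrt n\asymp\sqrt{\log n/n}$.

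The main obstacle is Step~2: making $\bar\beta_n\asymp(z_n/\sqrt n)^\lambda$ two-sided. On the lower side, I would restrict to $\theta\le z_n/(2\sqrt n)$, where the miss probability is at least $1/2$ by Chebyshev. On the upper side, I would bound $\int\PP_\theta(Z_n\le z_n)\pi_1(\theta)\,d\theta$ by the near-null mass plus $\int_{\theta>2z_n/\sqrt n} e^{-cn\theta^2/4}\pi_1(\theta)\,d\theta$. The second integral is $O\bigl(n^{-\lambda/2}e^{-cz_n^2}\bigr)$, which is negligible. The Step~1 moderate-deviation approximation is needed uniformly only for $z\in[0,C\sqrt{\log n}]$, which is well within Cram\'er's range.
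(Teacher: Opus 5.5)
Your route is the paper's: standardise $V_n$, read off $\rhotail=1/4$ from the binomial null tail and $\kappa=\lambda$ from the Gamma prior, invoke Lemma~\ref{lem:risk-template}, and de-standardise. Your Steps~1--2 check these inputs more carefully than the paper's one-line proof does. The genuine gap is in Step~3, which is also where the paper's proof rests entirely on the template.

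\textbf{Why Step~3 does not go through.} Minimising the surrogate $z^{-1}e^{-z^2/2}+(z^2/n)^{\lambda/2}$ does not locate the minimiser of the true $B_n$. Step~2 only gives $c\,(z/\sqrt n)^\lambda\le\bar\beta_n(z)\le C\,(z/\sqrt n)^\lambda$.

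The lower half is fine. If $z_n^{*2}\le\lambda\log n-K\log\log n$ with $K>\lambda+1$, your Type~I lower bound already makes $B_n(z_n^*)>B_n(\sqrt{\lambda\log n})$.

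The upper half fails. For $z^2=a\log n$ with $a>\lambda$, the Type~I term is negligible. Your bounds then only say $B_n(z)/B_n(\sqrt{\lambda\log n})\ge (c/C)(a/\lambda)^{\lambda/2}(1+o(1))$. This exceeds $1$ only when $a>\lambda(C/c)^{2/\lambda}$, so nothing rules out $a^*\in(\lambda,\lambda(C/c)^{2/\lambda}]$.

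Citing the lemma does not close this. Its hypotheses are themselves only $\asymp$ conditions, and its conclusion does not follow from them. Take a prior with $\Pi_1([0,\varepsilon])\asymp\varepsilon^\lambda$ but no mass on the shells $[4^{-k},2\cdot4^{-k}]$. Along the subsequence of $n$ for which $\sqrt{\lambda\log n/n}$ sits near the geometric centre of a shell, the true minimiser exceeds $(1+c_0)\sqrt{\lambda\log n}$ for a fixed $c_0>0$. For the same reason, differentiating an $\asymp$ relation to get $z_n^2=\lambda\log n+O(\log\log n)$ is not a valid step. That precision is exactly what the claim $\alpha_n^*\asymp n^{-\lambda/2}\,\mathrm{polylog}(n)$ needs.

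\textbf{The missing argument.} You need an increment argument that uses the actual Gamma density, $\pi_1(\theta)\asymp\theta^{\lambda-1}$ near $0$, not just the cumulative mass. Let $z_0=\sqrt{\lambda\log n}$. Since $\bar\beta_n$ is nondecreasing, for every $z\ge(1+\delta)z_0$,
\[
B_n(z)-B_n(z_0)\ \ge\ \pi_1L_1\int\PP_\theta\bigl(z_0<Z_n\le(1+\delta)z_0\bigr)\,\Pi_1(d\theta)\;-\;\pi_0L_0\,\alpha_n(z_0).
\]
Restrict to $\theta$ with $\E_\theta Z_n$ in the middle third of $[z_0,(1+\delta)z_0]$. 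Chebyshev gives probability at least $1/2$ there, and the density gives prior mass $\gtrsim_\delta(\log n/n)^{\lambda/2}$. This dominates $\alpha_n(z_0)\asymp n^{-\lambda/2}(\log n)^{-1/2}$, so $z_n^*\le(1+\delta)z_0$ eventually. Combined with the lower half, this yields $z_n^*=\sqrt{\lambda\log n}\,(1+o(1))$.

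For the polylog statement, run the same comparison with windows of width $\Delta\asymp\log\log n/\sqrt{\log n}$. Use Berry--Esseen in place of Chebyshev, which is legitimate since $\Delta\gg n^{-1/2}$. The increment is then $\gtrsim\Delta\,n^{-\lambda/2}(\log n)^{(\lambda-1)/2}$. This beats $\alpha_n$ once $z^2\ge\lambda\log n+K\log\log n$ for $K$ large, giving $z_n^{*2}=\lambda\log n+O(\log\log n)$ and hence the polylogarithmic form of $\alpha_n^*$.
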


The sign test, which is locally Bahadur-efficient for the Laplace family, exhibits MDP scaling under Bayes risk.
The MDP regime is robust across test statistics in the Laplace location problem; the $\sqrt{\log n}$ scaling persists regardless of the statistic used.

\begin{proof}
Apply Theorem~\ref{thm:mdp-generic} with $\rhotail=1/4$ and $\kappa=\lambda$.
Under $H_0$, $V_n\sim\mathrm{Bin}(n,1/2)$; the normal approximation gives $\PP_0(V_n\ge n/2+z\sqrt{n}/2)\asymp e^{-z^2/2}$, matching $\rhotail=1/4$ (since $z^2/2=2\cdot(1/4)\cdot z^2$).
The prior \eqref{eq:laplace-prior} gives $\Pi_1([0,\varepsilon])\asymp\varepsilon^\lambda$, so $\kappa=\lambda$.
Theorem~\ref{thm:mdp-generic} yields $a^*=\lambda/(4\cdot 1/4)=\lambda$ and threshold $z_n=\sqrt{\lambda\log n}$.
Translating back: $V_n$ rejects when $V_n\ge n/2 + \frac{1}{2}\sqrt{\lambda n\log n}(1+o(1))$.
The Type~I error is $\alpha_n^*\asymp n^{-\lambda/2}$ up to polylogarithmic factors.
\end{proof}

\begin{proposition}[Risk ranking: LRT/median/sign]\label{prop:laplace-efficiency}
Under the local prior \eqref{eq:laplace-prior}, Bayes-risk optimal calibration for the LRT, median, and sign tests all occurs at the MDP scale with the same threshold order $\sqrt{\lambda\log n}$.
In particular: (a)~the LRT is Bayes-optimal among tests with the same information set;
(b)~the median test is locally asymptotically equivalent to the LRT for Laplace location;
(c)~the sign test is also locally Bahadur-efficient for the Laplace family, since $c_{\mathrm{sign}}(\theta) = \theta^2 + o(\theta^2) = c_{\mathrm{LRT}}(\theta)$ locally.
All three tests share the same asymptotic MDP exponent $a^{*}=\lambda$; they differ only in sub-leading constants.
\end{proposition}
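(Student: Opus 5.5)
The proof reduces each of the three tests to the risk template of Lemma~\ref{lem:risk-template}. For each statistic we identify the pair $(\rhotail,\kappa)$. The prior exponent is the same for all three: \eqref{eq:laplace-prior} gives $\Pi_1([0,\varepsilon])\asymp\varepsilon^\lambda$, so $\kappa=\lambda$. It then suffices to show that each standardised statistic has a Gaussian null tail with $\rhotail=1/4$ on the moderate-deviation range $z\asymp\sqrt{\log n}$. The template then gives $a^*=\kappa/(4\rhotail)=\lambda$ for all three tests at once. Parts (a)--(c) are statements about which test is best and about constants. We argue them separately, using Proposition~\ref{prop:bayes-np} and Proposition~\ref{prop:laplace-bahadur}.

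The first step is the null tails. For the sign statistic we use the Cram\'er moderate-deviation theorem for $\mathrm{Bin}(n,1/2)$. It says $\PP_0(V_n\ge n/2+z\sqrt n/2)=\bar\Phi(z)(1+o(1))$ uniformly for $z=o(n^{1/6})$, which covers $z\asymp\sqrt{\log n}$. For the median, $\{M_n>z/\sqrt n\}$ is exactly a binomial event for $\sum\1\{X_i>z/\sqrt n\}$. This reduces the median to the same Cram\'er bound, with a recentring of order $z/\sqrt n$ that is negligible. For the signed root LRT, $\log\Lambda_n$ is a sum of bounded terms $|X_i|-|X_i-\hat\theta_n|$. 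Using the LAN expansion $\log\Lambda_n=\tfrac12 n\hat\theta_n^2(1+o_p(1))$ with $\hat\theta_n=\max\{0,M_n\}$, the LRT tail reduces again to the median's. In all three cases we get $\alpha_n(z)\asymp e^{-z^2/2}$ up to polynomial-in-$z$ factors, i.e.\ $\rhotail=1/4$.

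The second step is local detectability (condition (ii) of Theorem~\ref{thm:mdp-generic}). Under $\theta$ the sign statistic has mean shift $\sqrt n(p(\theta)-1/2)=\tfrac12\sqrt n\,\theta(1+O(\theta))$, because $p(\theta)-1/2=\tfrac12(1-e^{-\theta})$. Hoeffding's inequality then gives $\PP_\theta(\text{accept})\le\exp\{-c(\sqrt n\theta-z)^2\}$ whenever $\sqrt n\theta\gg z$. For the median and the LRT the analogous bound follows from the binomial representation. Inserting these bounds and the two tails into Lemma~\ref{lem:risk-template} gives $z_n^*=\sqrt{\lambda\log n}(1+o(1))$ for each test. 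This proves the common-order claim.

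The third step is the comparison. (a) follows from Proposition~\ref{prop:bayes-np}: the Bayes test thresholds $dM_1^{(n)}/dM_0^{(n)}$, and among tests built on the same data, no calibrated LRT can beat it. (b) uses the fact that the sample median is the MLE for Laplace location, so $\sqrt n M_n$ and the signed root $\sqrt{2\log\Lambda_n}$ differ by $o_p(1)$ under local alternatives $\theta\asymp\sqrt{\log n/n}$. (c) is the remark after Proposition~\ref{prop:laplace-bahadur}: $c_{\mathrm{sign}}=c_{\mathrm{LRT}}=\theta^2+o(\theta^2)$, so the three tests share the same curvature at $\theta=0$. Hence $\rhotail$ and $\kappa$ agree, and only $O(\sqrt{\log\log n})$ and constant prefactors can differ.

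The main obstacle is the uniformity required for the LRT and the median. The moderate-deviation regime $z\asymp\sqrt{\log n}$ is outside the reach of the CLT. We therefore need tail equivalences with relative error $1+o(1)$, or at least $\asymp$, out to $z=O(\sqrt{\log n})$, rather than weak convergence. The plan is to avoid a separate analysis of $\Lambda_n$ by reducing every event to a binomial count at a shifted point. For the LRT this needs a deterministic sandwich of $\log\Lambda_n$ between functions of $M_n$, valid on a set of null probability $1-o(n^{-\lambda/2})$. Establishing that sandwich is the step requiring real work.
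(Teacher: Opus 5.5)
Your route differs from the paper's. The paper argues (a)--(c) entirely through Bahadur slopes: Neyman--Pearson gives the LRT the maximal slope $2\KL(f_\theta\|f_0)$, the median inherits it as the MLE, and the expansion $\KL(\mathrm{Ber}(\tfrac12+\varepsilon)\|\mathrm{Ber}(\tfrac12))=2\varepsilon^2+O(\varepsilon^3)$ handles the sign test. It then simply asserts that all three share $a^*=\lambda$ because the template depends only on $(\rhotail,\kappa)$. It never checks that the median and the LRT have $\rhotail=1/4$ on the range $z\asymp\sqrt{\log n}$.

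Your plan puts the weight where the headline claim needs it. You verify the template inputs statistic by statistic: Cram\'er moderate deviations, the exact binomial form of $\{M_n>c\}$, and Hoeffding for detectability. For the common-exponent claim, yours is the more complete argument. One inference in your (c) is misattributed, though. Equal Bahadur curvature does not give equal $\rhotail$; that came from your Step~1. It gives a common detection radius $\theta\approx z/\sqrt n$, which only changes the Type~II term by a constant and so does not affect $a^*$.

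The genuine gap is in (a). Proposition~\ref{prop:bayes-np} says that thresholding the predictive ratio $dM_1^{(n)}/dM_0^{(n)}$ is Bayes-optimal. That statistic is not the generalised LRT $\Lambda_n$ of \eqref{eq:laplace-lrt}, whose tail you analyse in Step~1. Moreover, ``no calibrated LRT can beat it'' says the Bayes test dominates the LRT, which is the reverse of what (a) asserts. You need a link between the two statistics, or you must fall back on the paper's Bahadur-slope reading of (a).

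The link exists, and it also supplies the sandwich you single out as the hard step. For $\theta\ge0$ one has exactly $|x|-|x-\theta|=\theta\,\mathrm{sign}(x)+2(x-\theta)\1\{0<x<\theta\}$. Hence $\ell_n(\theta)-\ell_n(0)=\theta S_n+R_n(\theta)$, with $S_n=2V_n-n$ and $\E_0R_n(\theta)=n(1-e^{-\theta}-\theta)=-\tfrac12 n\theta^2+O(n\theta^3)$. The summands of $R_n$ lie in $[-2\theta,0]$ and have total variance $O(n\theta^3)$. Bernstein plus monotonicity of $R_n$ in $\theta$ then give $R_n(\theta)=-\tfrac12 n\theta^2+o(1)$ uniformly on $0\le\theta\le C\sqrt{\log n/n}$, outside a null event of super-polynomially small probability.

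Let $Z_n=S_n/\sqrt n$ be the standardised sign statistic. Maximising over $\theta$ gives $2\log\Lambda_n=(Z_n^+)^2+o(1)$; the event $\hat\theta_n>C\sqrt{\log n/n}$ costs only $O(n^{-C^2/2})$ under the null. Integrating against $\pi_1$ gives $\log(dM_1^{(n)}/dM_0^{(n)})=\tfrac12 Z_n^2-\tfrac{\lambda}{2}\log n+O(\log\log n)$ on the range $Z_n\asymp\sqrt{\log n}$.

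So the Bayes rule, the GLRT and the sign test all threshold $Z_n$ at $\sqrt{\lambda\log n}\,(1+o(1))$. This is what makes (a) true in the Bayes-risk sense you intended. It also reduces the LRT null tail directly to the exact binomial law of $V_n$, with additive $o(1)$ error, without passing through the median.
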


\begin{proof}
Part~(a): by the Neyman--Pearson lemma, the likelihood ratio maximises the power at every alternative, whence the LRT achieves the largest Bahadur slope $c_{\mathrm{LRT}}(\theta)=2\KL(f_\theta\|f_0)$ \citep{Bahadur1960}.
Part~(b): for Laplace location, the sample median is the MLE, so its Bahadur slope coincides with the LRT slope.
Part~(c): Proposition~\ref{prop:laplace-bahadur} gives $c_{\mathrm{sign}}(\theta)=2\KL(\mathrm{Ber}(p(\theta))\|\mathrm{Ber}(1/2))$. For Laplace, $p(\theta)=1-\frac12 e^{-\theta}$, so $p(\theta)-\frac12\sim\frac12\theta$ as $\theta\to 0$. Expanding $\KL(\mathrm{Ber}(\frac12+\varepsilon)\|\mathrm{Ber}(\frac12))=2\varepsilon^2+O(\varepsilon^3)$ gives $c_{\mathrm{sign}}(\theta)=\theta^2+o(\theta^2)$, matching $c_{\mathrm{LRT}}$.
All three statistics therefore yield identical leading-order risk under Lemma~\ref{lem:risk-template}, since the MDP calibration depends on $\rhotail$ (determined by the null tail) and~$\kappa$ (determined by the prior), not on the specific Bahadur slope constant.
\end{proof}

\subsection{Shape Testing via Bayes Factors}\label{subsec:shape}

\subsubsection{Model and GOF statistic}

Test $H_0: X_i \sim \mathcal{N}(\mu,\sigma^2)$ for some $(\mu,\sigma)$ versus $H_1: X_i \sim G$ for some $G\in\mathcal{G}$.

\begin{definition}[Bayes factor {\citep{JeffreysBook, Kass1995}}]\label{def:bayes-factor}
With priors $\Pi_0$ on $(\mu,\sigma)$ and $\Pi_1$ on $G$,
\begin{equation}\label{eq:bayes-factor}
\mathrm{BF}_{10}(X^n)
=
\frac{\int_{\mathcal{G}} p(X^n\mid G)\,\Pi_1(\mathrm{d}G)}
     {\int p(X^n\mid\mu,\sigma^2)\,\Pi_0(\mathrm{d}\mu,\mathrm{d}\sigma)}.
\end{equation}
The Bayes-optimal test rejects $H_0$ when $\log \mathrm{BF}_{10}(X^n) > \log(\pi_0 L_0/\pi_1 L_1)$.
\end{definition}

\subsubsection{Bayes risk decomposition: Laplace alternative}

Take $\mathcal{G}$ to be the Laplace family.
A Laplace approximation for marginal likelihoods yields
\begin{equation}\label{eq:bf-laplace-approx}
\log \mathrm{BF}_{10}(X^n)
=
[\ell_L(\hat\mu_L,\hat b)-\ell_N(\hat\mu_N,\hat\sigma)]
-\frac{(d_1-d_0)}{2}\log n
+O_{\PP}(1).
\end{equation}
Since $d_0=d_1=2$ here, the Occam factor vanishes and the evidence reduces to the likelihood contrast:
\begin{equation}\label{eq:bf-laplace-contrast}
\ell_L(\hat\mu_L,\hat b)-\ell_N(\hat\mu_N,\hat\sigma)
=
n\log\!\Big(\frac{\hat\sigma}{\hat b}\Big)
+\frac{n}{2}\log(2\pi) - \frac{n}{2}.
\end{equation}

\subsubsection{Closed-form Bayes factor for the double exponential}

A concrete instance of this framework is provided by the normal-vs-double-exponential (Laplace) testing problem.
\citet{Spiegelhalter1980omnibus} showed that the Bayes factor for testing normality against the double exponential (Laplace) alternative admits closed-form evaluation for small samples, and used a weighted average of directional Bayes factors against specific alternatives as an omnibus normality test.
\citet{Uthoff1970} established that the ratio of mean deviation to standard deviation is an optimum test statistic for the normal-vs-double-exponential problem in the Neyman--Pearson sense.
In the notation of this paper, the normal-vs-Laplace Bayes factor falls under the shape-testing framework above with $d_0=d_1=2$, so the Occam factor vanishes and the Bayes-factor evidence reduces to the likelihood contrast~\eqref{eq:bf-laplace-contrast}.
The Bayes-optimal test compares $\log\mathrm{BF}_{10}$ to the fixed threshold $\log(\pi_0L_0/\pi_1L_1)$ (Proposition~\ref{prop:bayes-np}).
Under $H_0$ (data are Normal), write $R_n = n^{-1}\log\mathrm{BF}_{10}=n^{-1}(\ell_L-\ell_N)$.
The following regularity conditions hold for the Normal and Laplace location--scale families:
\begin{enumerate}[label=(R\arabic*),leftmargin=3em]
\item\label{R:compact} Each $\Theta_M$ is contained in a compact
$K\subset\RR\times(0,\infty)$ with
$\sup_{\theta\in K}\E_0[\log f_M(X;\theta)]^2<\infty$.
\item\label{R:cont} $\theta\mapsto\log f_M(x;\theta)$ is continuous on~$K$
uniformly in~$x$ on compacts, and
$x\mapsto\log f_M(x;\theta)$ is continuous $F_0$-a.s.
\end{enumerate}
Under \ref{R:compact}--\ref{R:cont}, the profile functional $R(G)=\sup_{\theta\in K}\int\log f_L(x;\theta)\,dG-\sup_{\theta\in K}\int\log f_N(x;\theta)\,dG$ is continuous on the space of probability measures with the weak topology \citep[Lem.~2.3]{KieferWolfowitz1956}.
Sanov's theorem gives an LDP for the empirical measure~$\hat P_n$ with good rate function $\KL(\cdot\|F_0)$ \citep[Thm.~6.2.10]{DemboZeitouni1998}; the contraction principle \citep[Thm.~4.2.1]{DemboZeitouni1998} then yields an LDP for~$R_n$ with good rate function $I(r)=\inf\{\KL(G\|F_0): R(G)=r\}$.
Since the Normal model is correctly specified, $R(F_0)=-\KL(f_0\|f_L^*)<0$ with $b^*=\sigma_0\sqrt{2/\pi}$, and $I(R(F_0))=0$.
For any fixed $c>0$, the LDP upper bound gives
$\PP_0(\log\mathrm{BF}_{10}>c)=\PP_0(R_n\ge c/n)\le\exp\bigl\{-n\inf_{r\ge 0}I(r)+o(n)\bigr\}$.
Because $I$ is a good rate function with $I(R(F_0))=0$ and $R(F_0)<0$, the infimum $\inf_{r\ge 0}I(r)>0$ is a positive constant; hence the Type~I error decays exponentially in~$n$.
The MDP regime arises when model complexity grows with~$n$ (see Remark below); for this fixed-dimensional shape test, the Bayes factor already achieves exponential error decay without MDP recalibration.

\subsubsection{MDP regime and threshold scaling}

\begin{remark}[When the Bayes-factor boundary shifts with $n$]
If $\mathcal{G}$ is nonparametric or sieve-based with effective dimension $d_1=d_1(n)$, the Occam factor becomes $-\tfrac{1}{2}(d_1(n)-d_0)\log n$, recovering the BIC penalty of \citet{Schwarz1978}.
Keeping Bayes risk balanced across growing model complexity means the effective log-evidence at the decision boundary is of order $\log n$, the Bayes-factor analogue of MDP calibration.
The tension between $P$-values and Bayesian evidence in such settings is discussed by \citet{BergerSellke1987}; for Bayes-factor computation in linear and log-linear models, see \citet{Spiegelhalter1980}.
\end{remark}

\subsection{Multinomial Goodness-of-Fit}\label{subsec:multinomial}

\subsubsection{Model and GOF statistic}

Let $X_1,\ldots,X_n$ be i.i.d.\ categorical with $k$ categories and probability vector $p$.
Test $H_0: p=p_0$ vs.\ $H_1: p\neq p_0$.
The Pearson chi-squared statistic is
\begin{equation}\label{eq:chi-squared}
\chi_n^2
=
\sum_{j=1}^k \frac{(N_j-np_{0j})^2}{np_{0j}},
\end{equation}
with $\chi_n^2 \Rightarrow \chi^2_{k-1}$ under $H_0$.

\subsubsection{Invoking the template}

The chi-squared right tail satisfies $\PP(\chi^2_\nu \ge t)\asymp t^{\nu/2-1}e^{-t/2}$ for large $t$; matching Convention~\eqref{eq:tail-convention} for the Gaussianised statistic $\sqrt{\chi^2_n}$ gives $\rhotail = 1/4$.
A Dirichlet prior positive and continuous near $p_0$ gives $\Pi_1(\|p-p_0\|_2\le \varepsilon)\asymp \varepsilon^{k-1}$, so $\kappa = k-1$.
Lemma~\ref{lem:risk-template} (applied to $\sqrt{\chi_n^2}$) yields:

\begin{theorem}[Bayes-optimal chi-squared threshold]\label{thm:chi-squared-threshold}
Under the Dirichlet local mass condition, the Bayes-risk optimal critical value satisfies
\begin{equation}\label{eq:chi-squared-bayes-threshold}
\chi^{2*}_n
=
(k-1)\log n + O(\log\log n),
\end{equation}
equivalently $\alpha_n^*\asymp n^{-(k-1)/2}$ up to polylogarithmic factors.
\end{theorem}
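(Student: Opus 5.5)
The plan is to instantiate Lemma~\ref{lem:risk-template} (equivalently Theorem~\ref{thm:mdp-generic}) for the Gaussianised statistic $T_n\defeq\sqrt{\chi_n^2/n}$, so that $\sqrt{n}\,T_n=\sqrt{\chi_n^2}$. Three inputs must be identified: the tail constant $\rhotail$, the prior exponent $\kappa$, and local detectability. The threshold on the chi-squared scale then follows by squaring $t_n^*$.

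\emph{Step 1 (null tail).} Under $H_0$, $\chi_n^2\Rightarrow\chi^2_{k-1}$, and $\PP(\chi^2_{k-1}\ge s)\asymp s^{(k-3)/2}e^{-s/2}$. Writing $s=t^2$ gives
\[
\PP\bigl(\sqrt{\chi^2_{k-1}}>t\bigr)\asymp t^{k-3}e^{-t^2/2}=t^{k-3}\exp\{-2\cdot\tfrac14\,t^2\},
\]
so $\rhotail=1/4$ up to a polynomial prefactor in $t$. The limit law is not enough, because we need this for $t\sim\sqrt{\log n}$ at finite $n$. I would therefore invoke a multinomial moderate-deviation bound: for $t_n^2=O(\log n)$, $\PP_{p_0}(\chi_n^2\ge t_n^2)=\PP(\chi^2_{k-1}\ge t_n^2)(1+o(1))$. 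This follows from a local CLT / Cram\'er-type relative-error expansion for the sum of i.i.d.\ bounded vectors (cell indicators) in the region $\|N-np_0\|\lesssim\sqrt{n\log n}$.

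\emph{Step 2 (prior mass and detectability).} Take $d(p,p_0)$ to be the chi-squared metric $\bigl(\sum_j (p_j-p_{0j})^2/p_{0j}\bigr)^{1/2}$, which is equivalent to $\|\cdot\|_2$ near an interior $p_0$. A Dirichlet density that is positive and continuous at $p_0$ on the $(k-1)$-dimensional simplex gives $\Pi_1(d\le\varepsilon)\asymp\varepsilon^{k-1}$, hence $\kappa=k-1$. For detectability, under $p$ with $d(p,p_0)=\varepsilon$, $\sqrt{\chi_n^2}$ is approximately $\sqrt n\,\varepsilon$ plus an $O_\PP(1)$ fluctuation. Hoeffding/Bernstein bounds on each cell count then give $\PP_p(\sqrt{\chi_n^2}\le t)\le C\exp\{-c(\sqrt n\varepsilon-t)^2\}$ whenever $t\le c\sqrt n\varepsilon$, which is the analogue of Assumption~\ref{ass:detectability}.

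\emph{Step 3 (optimisation and translation).} Lemma~\ref{lem:risk-template} gives $a^*=\kappa/(4\rhotail)=k-1$, hence $t_n^{*2}=(k-1)\log n+\Delta_n$. To pin down $\Delta_n$, I would balance the risk exactly rather than only at leading order. On the Type~I side,
\[
\bar\alpha_n(t)\asymp t^{k-3}e^{-t^2/2}.
\]
On the Type~II side, there is a shell-integrated term $\asymp(t/\sqrt n)^{k-1}$ with a possible extra factor of $t$, as in the KS Step~4. Setting $t^2=(k-1)\log n+\Delta$ and equating the derivatives in $t$ of the two terms gives $e^{-\Delta/2}\asymp(\log n)^{c}$ for a fixed $c$ depending on $k$, so $\Delta_n=O(\log\log n)$. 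This yields \eqref{eq:chi-squared-bayes-threshold}, and then $\alpha_n^*\asymp n^{-(k-1)/2}\,\mathrm{polylog}(n)$. Because the prefactors are polynomial in $t$, they only shift $\chi^{2*}_n$ additively by $O(\log\log n)$; they cannot change the leading coefficient $k-1$.

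\emph{Main obstacle.} The delicate step is Step~1 together with the Type~II lower bound. The null tail must be controlled uniformly at the moderate-deviation level $t^2\asymp\log n$ with only $1+o(1)$ relative error, since Sanov gives only the exponent and not the polynomial prefactor. The averaged Type~II error must also be shown to be genuinely $\asymp(\log n/n)^{(k-1)/2}$ up to polylog factors, meaning that alternatives at distance $\varepsilon\lesssim t_n/\sqrt n$ are missed with probability bounded below. For the first, I would use the known multinomial moderate-deviation expansion (Remark~\ref{rem:discrete-correction}). For the second, I would use the noncentral $\chi^2_{k-1}(n\varepsilon^2)$ approximation, which gives constant miss probability whenever $\sqrt n\varepsilon\le t-C$.
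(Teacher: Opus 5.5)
Your route is the paper's own: instantiate Lemma~\ref{lem:risk-template} for $\sqrt{\chi^2_n}$ with $\rhotail=1/4$ and $\kappa=k-1$, then square. Your Steps~1--2 are sound and make explicit what the paper leaves implicit: relative-error control of the null tail at $t^2\asymp\log n$, the chi-squared metric, Hoeffding detectability, and the noncentral lower bound on the miss probability.

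The genuine gap is in Step~3, and the paper's one-line proof shares it, since Lemma~\ref{lem:risk-template} also locates its minimiser by differentiating relations established only up to constants. Write $s=t^2$ and $s_0=(k-1)\log n$.
\begin{itemize}
\item What you propose to prove are \emph{level} bounds, $\bar\alpha_n(s)\asymp s^{(k-3)/2}e^{-s/2}$ and $\bar\beta_n(s)\asymp (s/n)^{(k-1)/2}$. Such relations cannot be differentiated. In any case, at finite $n$ the statistic is lattice-valued, so $B_n$ is a step function with no derivatives to equate.
\item These bounds do give the lower half, $\chi^{2*}_n\ge s_0-O(\log\log n)$, because $B_n\ge\pi_0L_0\bar\alpha_n$ and $\min B_n\lesssim(\log n/n)^{(k-1)/2}$.
\item From above they give only $\chi^{2*}_n=O(\log n)$. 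With merely polylog-accurate Type~II bounds, as in your ``main obstacle'' paragraph, they do not give even that.
\end{itemize}
The reason is that the risk is very flat to the right of $s_0$. Raising the threshold from $s_0$ to $s_0+\Delta$ saves at most $\pi_0L_0\bar\alpha_n(s_0)$, which is only a $1/\log n$ fraction of $B_n(s_0)$. It costs the increment $\pi_1L_1[\bar\beta_n(s_0+\Delta)-\bar\beta_n(s_0)]$. Where the minimiser lies is therefore decided by the increments of $\bar\beta_n$, which constant-factor bounds do not see. For example, a nondecreasing $\bar\beta_n$ that is constant on $[s_0,2s_0]$ still satisfies $\bar\beta_n(s)\asymp(s/n)^{(k-1)/2}$, and for suitable constants it moves the minimiser to $2s_0$. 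So your claim that the polynomial prefactors ``cannot change the leading coefficient $k-1$'' does not follow from the inputs you list.

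What is missing is a two-sided \emph{increment} estimate, uniform in $s\asymp\log n$ and $1\le\Delta\lesssim\log n$:
\[
\bar\beta_n(s+\Delta)-\bar\beta_n(s)\;\asymp\;\bigl[(s+\Delta)^{(k-1)/2}-s^{(k-1)/2}\bigr]\,n^{-(k-1)/2}.
\]
If $\Pi_1$ has a density $g$ that is positive and continuous at $p_0$, this follows from a local Gaussian approximation plus Fubini.
\begin{itemize}
\item Write $p=p_0+h/\sqrt n$ in chi-squared-metric coordinates on the tangent hyperplane, so that $\chi^2_n=\|h+W_n\|^2$ with $W_n$ approximately $N(0,I_{k-1})$.
\item Three tools reduce the increment to $g(p_0)\,J\,n^{-(k-1)/2}\int\PP(s<\|h+Z\|^2\le s+\Delta)\,dh$, where $J$ is the Jacobian of the coordinate change. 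They are the multivariate Berry--Esseen bound over differences of balls, continuity of $g$ at $p_0$, and your Hoeffding tails for $\|h\|$ far from $\sqrt s$.
\item By Fubini and translation invariance of Lebesgue measure, this integral equals exactly $\omega_{k-1}[(s+\Delta)^{(k-1)/2}-s^{(k-1)/2}]$, where $\omega_{k-1}$ is the volume of the unit ball in $\R^{k-1}$.
\end{itemize}
Now compare $B_n(s_0\pm\Delta)$ with $B_n(s_0)$. Use this increment estimate, your level bounds on $\bar\alpha_n$, and, for $s\gg\log n$, the level lower bound on $\bar\beta_n$. The result is $\chi^{2*}_n=(k-1)\log n+O(1)$, which is stronger than the stated $O(\log\log n)$. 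It also settles your hedge: here $\bar\beta_n(s)\sim g(p_0)J\omega_{k-1}(s/n)^{(k-1)/2}$, with no extra shell factor of $t$ of the kind invoked for KS.
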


\begin{proof}
Apply Lemma~\ref{lem:risk-template} with the Gaussianised statistic $\sqrt{\chi_n^2}$.
The $\chi^2_\nu$ tail gives $\rhotail=1/4$, and the Dirichlet prior gives $\kappa=k-1$.
Theorem~\ref{thm:mdp-generic} yields $a^*=(k-1)$, so the optimal threshold for $\sqrt{\chi_n^2}$ is $\sqrt{(k-1)\log n}$, i.e., $\chi_n^{2*}=(k-1)\log n + O(\log\log n)$.
The Type~I error follows from $\PP(\chi^2_{k-1}\ge (k-1)\log n)\asymp n^{-(k-1)/2}$.
\end{proof}

The scaling is $O(\log n)$ rather than $O(\sqrt{\log n})$ because the chi-squared statistic is already a quadratic form; its square root plays the role of the Gaussianised statistic in Lemma~\ref{lem:risk-template}.

A precise Laplace expansion of the marginal likelihood under the Dirichlet prior refines this picture.  The log-evidence is
\begin{equation}\label{eq:logN-refinement}
W \;=\; n\,D(\hat{p}\|p_0) \;+\; \frac{k-1}{2}\log n \;+\; O(1).
\end{equation}
The $\tfrac{k-1}{2}\log n$ term represents the Occam--complexity correction arising from integration over the $(k{-}1)$-dimensional simplex.
The leading $n\,D(\hat{p}\|p_0)$ term governs exponential evidence accumulation, while the logarithmic term shifts the effective decision boundary as $n$ increases.
This refinement clarifies why moderate-deviation calibration naturally emerges: the exponential KL rate competes with a logarithmic complexity correction, producing decision thresholds that scale with $\sqrt{\log n/n}$.

\subsection{Testing Independence in Contingency Tables}\label{subsec:contingency}

Let $(X_i,Y_i)$ be i.i.d.\ bivariate categorical with $r$ and $c$ categories. Test independence.
The chi-squared statistic
$\chi_n^2 = \sum_{j,\ell} (N_{j\ell}-\hat E_{j\ell})^2/\hat E_{j\ell}$
satisfies $\chi_n^2\Rightarrow \chi^2_{\nu}$ with $\nu=(r-1)(c-1)$.

If the alternative prior places local mass $\asymp \varepsilon^{\nu}$ within Euclidean distance $\varepsilon$ of the independence manifold, then the Bayes-optimal threshold is
\begin{equation}\label{eq:indep-logn}
\chi_n^{2*} = \nu\log n + O(\log\log n), \qquad \nu=(r-1)(c-1).
\end{equation}

\section{Connection to Fisher Information Geometry}
\label{sec:fisher}

This section reinterprets the MDP phenomenon in the language of information geometry. In interpretive terms, the Fisher metric provides a chart in which the ``same phenomenon'' takes a coordinate-invariant form.

\subsection{Statistical Manifolds}

\begin{definition}[Fisher information metric]\label{def:fisher-metric}
For a regular parametric family $\{p_\theta:\theta\in\Theta\}$, the Fisher information matrix
\begin{equation}\label{eq:fisher-matrix}
I(\theta)_{ij}
=
\EE_\theta\!\left[
\frac{\partial \log p_\theta(X)}{\partial \theta_i}
\frac{\partial \log p_\theta(X)}{\partial \theta_j}
\right]
\end{equation}
defines a Riemannian metric on $\Theta$ \citep[see][for historical context]{Zabell1992}.
\end{definition}

For nearby distributions, KL divergence is locally the squared geodesic distance induced by the Fisher metric \citep{DawidMusio2015}:
\begin{equation}\label{eq:kl-fisher}
\KL(p_\theta\|p_{\theta_0})
=
\frac{1}{2}(\theta-\theta_0)^\top I(\theta_0)(\theta-\theta_0)
+O(\|\theta-\theta_0\|^3).
\end{equation}

\subsection{MDP Scale in Fisher Geometry}

\noindent
The local KL expansion \eqref{eq:kl-fisher} means that $\KL(p_\theta\|p_{\theta_0}) \approx \tfrac{1}{2}\,d_F(\theta,\theta_0)^2$ in a neighbourhood of $\theta_0$, where $d_F$ is Fisher geodesic distance.
Thus Lemma~\ref{lem:risk-template} applies with $\rhotail = 1/4$ (from $\KL \approx \tfrac{1}{2}d_F^2 = 2\cdot\tfrac{1}{4}\cdot d_F^2$) and with effective prior exponent
\begin{equation}\label{eq:kappa-fisher}
\kappa \;=\; \lambda + d,
\end{equation}
since a $d$-dimensional prior with density exponent $\lambda$ in Euclidean coordinates satisfies $\Pi_1(d_F(\theta,\theta_0)\le\varepsilon) \asymp \varepsilon^{\lambda+d}$ (the extra factor $\varepsilon^d$ is the volume element of the Fisher ball).

\begin{theorem}[Geometric interpretation of MDP]\label{thm:fisher-mdp}
Let $d_F(\theta,\theta_0)$ denote Fisher geodesic distance, $d=\dim(\Theta)$, and $\kappa = \lambda + d$.
Applying Lemma~\ref{lem:risk-template} with $\rhotail=1/4$, the Bayes-optimal GOF test rejects when
\begin{equation}\label{eq:fisher-mdp}
d_F(\hat\theta_n,\theta_0) \;>\; \sqrt{\frac{\kappa\,\log n}{n}} \;=\; \sqrt{\frac{(\lambda + d)\log n}{n}},
\end{equation}
equivalently $\alpha_n^{*} \asymp n^{-\kappa/2}$.
The Jacobian correction of Layer~2 contributes an $O(\sqrt{\log\log n/n})$ refinement without changing the leading coefficient.
\end{theorem}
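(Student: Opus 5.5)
The plan is to apply Lemma~\ref{lem:risk-template} to the Gaussianised statistic $T_n \defeq d_F(\hat\theta_n,\theta_0)$. This means identifying $\rhotail$ and $\kappa$, checking the detectability hypothesis of Theorem~\ref{thm:mdp-generic}(ii), and then converting the resulting threshold $t_n^*=\sqrt{a^*\log n}$ back into Fisher-distance units via $\varepsilon_n^*=t_n^*/\sqrt{n}$.

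\emph{Step 1 (null tail).} I will use the local asymptotic normality of the regular family: under $\theta_0$, $\sqrt{n}\,I(\theta_0)^{1/2}(\hat\theta_n-\theta_0)\Rightarrow N(0,I_d)$. By \eqref{eq:kl-fisher}, $d_F(\hat\theta_n,\theta_0)^2=(\hat\theta_n-\theta_0)^\top I(\theta_0)(\hat\theta_n-\theta_0)(1+o_{\PP}(1))$, so $n\,T_n^2\Rightarrow\chi^2_d$. The argument then mirrors the multinomial case of Theorem~\ref{thm:chi-squared-threshold}. Since $\PP(\chi^2_d\ge t^2)\asymp t^{d-2}e^{-t^2/2}$, we get $e^{-t^2/2}=e^{-2\cdot(1/4)t^2}$, hence $\rhotail=1/4$. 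The factor $t^{d-2}$ is polylogarithmic at $t\sim\sqrt{\log n}$ and is set aside as a Layer~2 correction.

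\emph{Step 2 (prior exponent and detectability).} I will take a prior density behaving like $\|\theta-\theta_0\|^{\lambda}$ near $\theta_0$ (density exponent $\lambda$). Fisher balls are comparable to Euclidean balls, up to the constants $\lambda_{\min},\lambda_{\max}$ of $I(\theta_0)$. Integrating in polar coordinates then gives $\Pi_1(d_F\le\varepsilon)\asymp\int_0^\varepsilon r^{\lambda}r^{d-1}dr\asymp\varepsilon^{\lambda+d}$, so $\kappa=\lambda+d$. For detectability, I will argue that for $d_F(\theta,\theta_0)=\varepsilon\gg t/\sqrt n$ the statistic $\sqrt n\,T_n$ concentrates near $\sqrt n\,\varepsilon$ with Gaussian fluctuations, so $\PP_\theta(\sqrt n T_n\le t)$ is exponentially small in $n(\varepsilon-t/\sqrt n)^2$. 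This uses a moderate-deviation bound for the MLE, uniform over a neighbourhood of $\theta_0$.

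\emph{Step 3 (conclusion).} Lemma~\ref{lem:risk-template} then gives $a^*=\kappa/(4\rhotail)=\kappa$, so $t_n^*=\sqrt{\kappa\log n}$ and rejection occurs when $d_F(\hat\theta_n,\theta_0)>\sqrt{\kappa\log n/n}$. The Type~I error is $\alpha_n^*\asymp e^{-\kappa\log n/2}=n^{-\kappa/2}$, up to the polylog factor from Step 1. The Layer~2 claim follows the KS remark. The shell/Jacobian factor $t_n^{d-2}$ in the null tail, together with the surface factor in the Type~II integral, multiplies the two risk terms by powers of $\log n$. Re-balancing $e^{-t^2/2}(\log n)^{p}=(t^2/n)^{\kappa/2}(\log n)^{q}$ shifts $t^2$ by $O(\log\log n)$, i.e.\ it shifts $t$ by $O(\log\log n/\sqrt{\log n})\subseteq O(\sqrt{\log\log n})$. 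Dividing by $\sqrt n$ gives the stated $O(\sqrt{\log\log n/n})$ refinement.

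The main obstacle is Step~1 at the moderate-deviation scale. Weak convergence to $\chi^2_d$ alone does not control $\PP(\sqrt n T_n>t_n)$ at $t_n\sim\sqrt{\log n}$. What is needed is a moderate-deviation principle for $\hat\theta_n$, valid when $t_n=o(\sqrt n)$, so that the tail is $e^{-t_n^2/2(1+o(1))}$. I would first try to obtain this from a Cramér-type MDP for the score sum $n^{-1/2}\sum_i\nabla\log p_{\theta_0}(X_i)$, assuming a finite exponential moment of the score. I would then transfer it to $\hat\theta_n$ through the usual one-step expansion, with a remainder of order $O_{\PP}(\log n/\sqrt n)$ on an event of probability $1-o(n^{-\kappa/2})$. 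The $(1+o(1))$ in the exponent only affects the constant at Layer~2 order, which matches how the statement is phrased.
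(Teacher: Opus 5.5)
Your proposal is correct and follows the paper's route: identify $\rho=1/4$ from the Gaussianised null tail, take $\kappa=\lambda+d$ from the volume of Fisher balls, invoke Lemma~\ref{lem:risk-template}, and rescale $t_n^*=\sqrt{\kappa\log n}$ by $\sqrt n$ exactly as the paper does (the paper reads $\rho$ off $\KL\approx\tfrac12 d_F^2$, you off the $\chi^2_d$ limit of $n\,d_F(\hat\theta_n,\theta_0)^2$, which gives the same constant). One caution on your extra rigour step: a tail $e^{-t_n^2(1+o(1))/2}$ only yields $\alpha_n^*=n^{-\kappa/2+o(1)}$ and $t_n^*=\sqrt{\kappa\log n}\,(1+o(1))$, so the $\asymp$ claim and the $O(\sqrt{\log\log n/n})$ refinement need the relative-error (Cram\'er/Rubin--Sethuraman) form $\PP_{\theta_0}(\sqrt n\,T_n>t)=\PP(\chi^2_d>t^2)(1+o(1))$ uniformly in $t=O(\sqrt{\log n})$, which your score-plus-one-step argument does deliver because the remainder perturbs the tail only by a factor $\exp\{O((\log n)^{3/2}/\sqrt n)\}\to 1$.
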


\begin{proof}
The local KL expansion gives $\rhotail=1/4$ as shown in \eqref{eq:kl-fisher}.
The prior mass condition \eqref{eq:kappa-fisher} gives $\kappa=\lambda+d$.
Applying Theorem~\ref{thm:mdp-generic}: $a^*=\kappa/(4\rhotail)=\kappa$, $t_n^*=\sqrt{\kappa\log n}$, and $\alpha_n^*\asymp n^{-\kappa/2}$.
Since $d_F(\hat\theta_n,\theta_0)=\sqrt{\chi_n^2/n}+o_\PP(n^{-1/2})$ locally, the threshold on $d_F$ is $\sqrt{\kappa\log n/n}$.
\end{proof}

Expressed in Fisher coordinates, the MDP threshold is coordinate-invariant and depends only on sample size, parameter dimension~$d$, and the prior exponent~$\lambda$.
The Fisher metric is (up to scale) the unique Riemannian metric monotone under Markov embeddings and invariant under sufficient statistics, which explains why the same $\sqrt{\log n/n}$ resolution appears across disparate GOF settings.

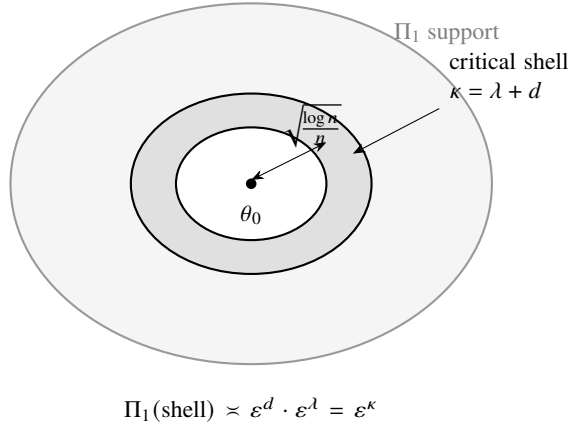
\begin{figure}[ht]
\centering
\begin{tikzpicture}[>=Stealth, font=\small]
  \draw[thick, black!40, fill=black!4] (0,0) ellipse (3.2cm and 2.4cm);
  \node[font=\footnotesize, black!50] at (2.6,2.0) {$\Pi_1$ support};
  \draw[thick, black, fill=black!12] (0,0) ellipse (1.6cm and 1.2cm);
  \draw[thick, black, fill=white] (0,0) ellipse (1.0cm and 0.75cm);
  \fill[black] (0,0) circle (2pt);
  \node[anchor=north, font=\footnotesize] at (0,-0.15) {$\theta_0$};
  \draw[<->, thin] (0,0.05) -- (1.0,0.55);
  \node[font=\footnotesize, anchor=south west] at (0.3,0.35) {$\sqrt{\!\frac{\log n}{n}}$};
  \draw[<-, thin] (1.35,0.4) -- (2.5,1.0) node[anchor=south west, font=\footnotesize, text width=2.2cm, align=left] {critical shell\\$\kappa = \lambda + d$};
  \node[font=\footnotesize, anchor=north, text width=4cm, align=center] at (0,-2.7) {$\Pi_1(\text{shell}) \asymp \varepsilon^{d} \cdot \varepsilon^{\lambda} = \varepsilon^{\kappa}$};
\end{tikzpicture}
\caption{Fisher geometry of the MDP boundary.  The prior $\Pi_1$ places mass $\asymp\varepsilon^{\kappa}$ in the critical shell of Fisher radius $\sqrt{\log n/n}$ around the null $\theta_0$.  The volume element contributes $\varepsilon^{d}$ (dimension) and the density contributes $\varepsilon^{\lambda}$ (prior exponent).}\label{fig:fisher-ball}
\end{figure}

\section{Numerical Verification of the Calibration Template}
\label{sec:numerics}

To complement the asymptotic derivations, we numerically verify the calibration principle of Theorem~\ref{thm:mdp-generic} and Lemma~\ref{lem:risk-template}.
For representative statistics, we compute the Bayes risk
$B_n(a)=n^{-2\rhotail\,a}+(a\log n/n)^{\kappa/2}$
as a function of the threshold parameter~$a$ and compare the finite-$n$ minimiser with the theoretical prediction $a^{\star}=\kappa/(4\rhotail)$.
No Monte Carlo or stochastic simulation is involved: all curves are deterministic evaluations of the analytic risk formula.

\subsection{Risk Curves and Convergence of the Minimiser}

Figure~\ref{fig:risk-vs-a} plots $B_n(a)$ for the KS statistic ($\rhotail=1$, $\kappa=2$) at sample sizes $n\in\{10^2,10^3,10^4,10^6\}$.
Dots mark the numeric minimiser $a_n^{\mathrm{num}}$ (deterministic grid minimisation of the analytic formula, not simulation) at each~$n$.
As $n$ grows, the risk curves sharpen and the numeric minimiser converges toward the theoretical value $a^{\star}=\kappa/(4\rhotail)=0.5$ (dashed line).
The finite-$n$ correction is $O(1/\log n)$, consistent with the $O(\sqrt{\log\log n})$ threshold refinement discussed in Layer~2 of the boxed remark in Section~\ref{sec:ks}.

\begin{figure}[ht]
\centering
\includegraphics[width=0.78\textwidth]{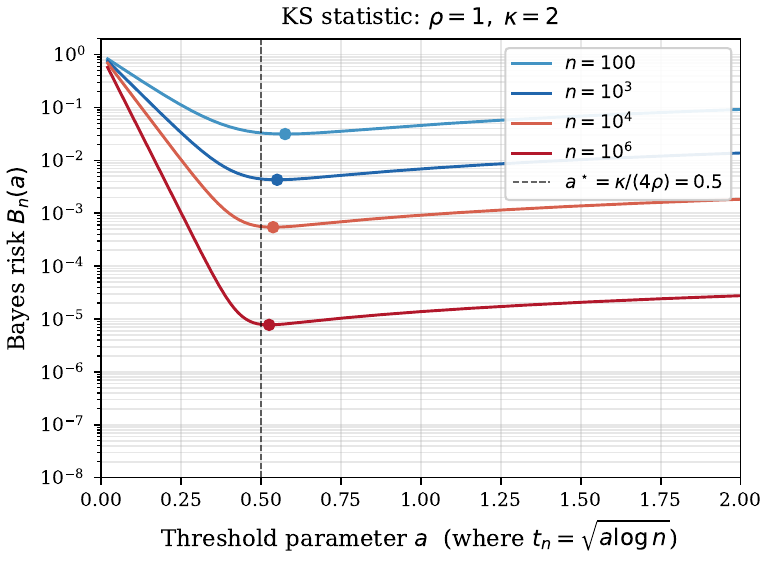}
\caption{Bayes risk $B_n(a)$ vs.\ threshold parameter~$a$ (where $t_n=\sqrt{a\log n}$) for the KS statistic with $\rhotail=1$ and $\kappa=2$.
Dots: numeric minimiser at each~$n$.
Dashed line: asymptotic optimum $a^{\star}=0.5$.
The minimiser converges to $a^{\star}$ as $n\to\infty$.}\label{fig:risk-vs-a}
\end{figure}

\subsection{Regime Comparison: CLT vs.\ MDP vs.\ LDP}

Figure~\ref{fig:regime-comparison} compares the Bayes risk under three calibration strategies for the KS test with $\kappa=2$:
\begin{enumerate}[label=(\roman*),leftmargin=2em]
\item \textbf{Fixed-$\alpha$ (CLT):} $\alpha=0.05$, implying $a(n)=\log(1/\alpha)/(2\rhotail\log n)\to 0$.
The risk is bounded below by $\alpha=0.05$ and cannot vanish.
\item \textbf{MDP optimal:} $a=a^{\star}=0.5$.
The risk decays as $(\log n/n)^{\kappa/2}$, the optimal polynomial rate.
\item \textbf{LDP:} $t_n\propto\sqrt{n}$, so $a\propto n/\log n$.
Type~I error vanishes exponentially, but the enormous threshold renders the test powerless against local alternatives: the Type~II term converges to a positive constant, so the risk is bounded away from zero.
\end{enumerate}

\begin{figure}[ht]
\centering
\includegraphics[width=0.78\textwidth]{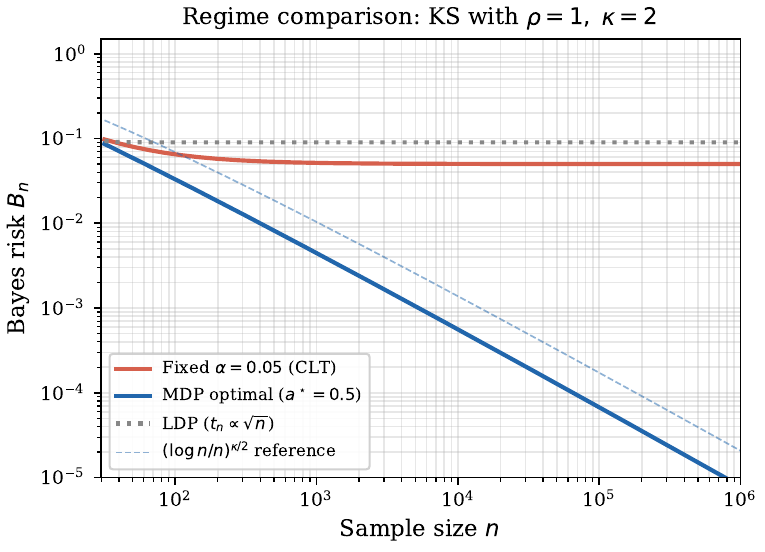}
\caption{Bayes risk vs.\ sample size under three calibration regimes for the KS test ($\rhotail=1$, $\kappa=2$).
Only MDP calibration achieves the optimal rate $(\log n/n)^{\kappa/2}$ (dashed reference).
Fixed-$\alpha$ risk stagnates at $\alpha$; LDP risk also stagnates because the threshold is too large for any power against local alternatives.}\label{fig:regime-comparison}
\end{figure}

\subsection{Verification Across Examples}

Table~\ref{tab:verification} collects the numeric minimiser $a_n^{\mathrm{num}}$ for each main example at $n=10^4$ and $n=10^6$, alongside the theoretical $a^{\star}=\kappa/(4\rhotail)$.
In every case, the numeric minimiser approaches~$a^{\star}$ as $n$ grows.
The remaining gap at finite~$n$ is the $O(1/\log n)$ correction from the polylogarithmic shell factor; it does not affect the leading-order scaling.

\begin{table}[ht]
\centering
\caption{Predicted vs.\ numeric minimiser of $B_n(a)$.  All entries confirm convergence of $a_n^{\mathrm{num}}\to a^{\star}$ as $n\to\infty$.}\label{tab:verification}
\smallskip
\begin{tabular}{llcccccc}
\toprule
\textbf{Setting} & \textbf{Thm.} & $\rhotail$ & $\kappa$ & $a^{\star}$ & $a_n^{\mathrm{num}}$ ($n\!=\!10^4$) & $a_n^{\mathrm{num}}$ ($n\!=\!10^6$) & Error ($n\!=\!10^6$)\\
\midrule
KS statistic           & \ref{thm:ks-mdp}                 & $1$   & $2$ & $0.50$ & $0.54$ & $0.53$ & $+5\%$\\
Laplace sign ($\lambda\!=\!2$)  & \ref{thm:laplace-sign-threshold} & $1/4$ & $2$ & $2.00$ & $1.85$ & $1.90$ & $-5\%$\\
Multinomial $\chi^2$ ($k\!=\!3$) & \ref{thm:chi-squared-threshold} & $1/4$ & $2$ & $2.00$ & $1.85$ & $1.90$ & $-5\%$\\
Fisher ($d\!=\!2$, $\lambda\!=\!1$) & \ref{thm:fisher-mdp}          & $1/4$ & $3$ & $3.00$ & $2.42$ & $2.58$ & $-14\%$\\
\bottomrule
\end{tabular}
\end{table}

The key observation: the predicted $a^{\star}=\kappa/(4\rhotail)$ correctly identifies the basin of the risk minimum in every setting.
The finite-sample correction is monotonically shrinking and of the expected order.
These numerics confirm that the $(\rhotail,\kappa)$ template is not merely asymptotic algebra; it delivers actionable calibration constants at moderate sample sizes.

\section{Discussion}
\label{sec:discussion}

\subsection{Structural Interpretation of Moderate Deviations}

The central finding of this paper is that Bayes-risk optimal calibration of goodness-of-fit tests is governed by two primitive quantities: the Gaussian-type null tail rate~$\rhotail$ and the local prior mass exponent~$\kappa$.
Theorem~\ref{thm:mdp-generic} establishes that whenever these two ingredients are present, a sub-Gaussian null tail satisfying $\PP_{F_0}(\sqrt{n}\,T_n>t)\asymp\exp(-2\rhotail\,t^2)$ and a prior placing mass $\Pi_1(d(F,\calF_0)\le\varepsilon)\asymp\varepsilon^{\kappa}$ near the null, the Bayes-optimal threshold takes the closed-form $t_n^{*}=\sqrt{a^{\star}\log n}$ with $a^{\star}=\kappa/(4\rhotail)$.
The Risk Decomposition Template (Lemma~\ref{lem:risk-template}) makes each subsequent application a two-line corollary: identify $\rhotail$ and $\kappa$, read off $a^{\star}$ from Table~\ref{tab:constants}.

The moderate deviation regime thus emerges not from testing convention or historical precedent but from the intrinsic geometry of null tails and alternative mass.
It is, in this sense, a structural feature of the inferential landscape rather than a methodological choice.
The Type~I cost $n^{-2\rhotail\,a}$ and the Type~II cost $(a\log n/n)^{\kappa/2}$ are balanced at a unique interior point; any departure, whether toward fixed-$\alpha$ calibration ($a\to 0$) or large-deviation calibration ($a\propto n$), incurs strictly higher aggregate risk.
This characterisation is regime-selective: it identifies the MDP scale as the \emph{only} asymptotic regime consistent with Bayes-risk minimisation under the stated regularity conditions.

The scope of this conclusion is worth emphasising.
MDP optimality is specific to Bayes risk under priors satisfying the polynomial local mass condition $\Pi_1(d(F,\calF_0)\le\varepsilon)\asymp\varepsilon^{\kappa}$; priors that concentrate mass on well-separated alternatives, where the alternative is bounded away from the null, favour the large-deviation regime instead, and the MDP scaling ceases to be optimal.
Similarly, the sub-Gaussian tail assumption $\PP_0(\sqrt{n}\,T_n > t)\asymp\exp(-2\rhotail\,t^2)$ is satisfied by the classical statistics studied here (Kolmogorov--Smirnov, $\chi^2$, Fisher) but may fail for heavy-tailed test statistics or for statistics in nonparametric settings where the null convergence rate is slower than $n^{-1/2}$.
The numerical verification in Section~\ref{sec:numerics} confirms that the finite-sample behaviour of the risk curves is well-captured by the asymptotic formula at moderate sample sizes ($n\ge 10^3$), with the finite-$n$ correction to $a^{\star}$ decaying monotonically.
Nevertheless, the two-term risk decomposition is an upper bound that discards lower-order corrections; in applications where the cost asymmetry between Type~I and Type~II errors is extreme, a refined expansion retaining these corrections may be warranted.

\subsection{Relation to Bayesian Testing Literature}

Recent work \citep{DPSZ2026} develops a unified theory of Bayesian hypothesis testing via moderate deviation asymptotics, with emphasis on Bayes factors and integrated risk expansions.
The present paper is complementary in scope and mechanism.
Where that work addresses the question of \emph{why Bayes factors operate on the moderate deviation scale}, this paper addresses a logically distinct question: \emph{why does any goodness-of-fit calibration under Bayes risk operate on the moderate deviation scale, regardless of whether a Bayes factor or likelihood-ratio representation is available?}

Our $(\rhotail,\kappa)$ template applies to statistics that are not likelihood ratios, including Kolmogorov--Smirnov statistics, empirical-process functionals, and divergence-based tests, and does not require the alternative to be specified through a parametric model.
The moderate deviation scaling emerges here as a structural consequence of risk balancing, not as a property of any particular testing paradigm; the MDP regime is where the problem lives, regardless of the statistic used to interrogate it.
A companion development \citep{PSZ2026evalues} extends these ideas to the sequential setting, showing that a Fubini decomposition of Bayes risk under log-loss identifies the likelihood ratio as the canonical e-value and that the moderate-deviation scale governs boundary selection for anytime-valid inference.

Under standard regularity, classical GOF statistics, namely Kolmogorov--Smirnov, likelihood-ratio, Pearson $\chi^2$, and entropy-deficit criteria, share the same Sanov rate function $\KL(G\|F_0)$ \citep{Sanov1957, Hoeffding1965}.
\citeauthor{Good1955}'s (\citeyear{Good1955}) weight-of-evidence framework and \citeauthor{Jaynes1957}'s (\citeyear{Jaynes1957}) maximum-entropy formulation recover this rate function from complementary starting points.
These equivalence results explain why Lemma~\ref{lem:risk-template} produces the same MDP scaling across seemingly disparate GOF statistics: once $\rhotail$ and~$\kappa$ are identified, the Bayes-risk mechanism is indifferent to the choice of statistic.

The Bayesian testing literature has long recognised that posterior model probabilities depend on the tail behaviour of the marginal likelihood \citep{Dawid2011PosteriorModelProbabilities}.
\citet{CarotaParmigiani1996} develop a complementary perspective, constructing diagnostic measures for model criticism that quantify the sensitivity of posterior inferences to local perturbations of the assumed model; their framework highlights precisely the kind of prior-to-likelihood geometry that, in our setting, is captured by the interplay of~$\rhotail$ and~$\kappa$.
The present paper makes this dependence explicit and quantitative: the marginal likelihood's tail rate is governed by~$\rhotail$, while the prior's local geometry contributes~$\kappa$.
This two-parameter reduction complements \citeauthor{Dawid1992Prequential}'s (\citeyear{Dawid1992Prequential}) prequential perspective, in which the cumulative predictive log-likelihood serves as the fundamental measure of model adequacy.
In our framework, the prequential criterion's growth rate under the alternative determines~$\kappa$, closing the circle between risk-based and prediction-based approaches.

\subsection{Implications for Scientific and Regulatory Practice}

The results have practical consequences for the calibration of statistical tests in settings where sample sizes vary across studies or accumulate sequentially.

First, the Bayes-risk perspective implies that rejection thresholds should evolve with sample size.
A fixed significance level $\alpha$ is optimal only in the degenerate case where no prior mass accumulates near the null; once the prior is absolutely continuous near~$\calF_0$, the MDP threshold $t_n^{*}\propto\sqrt{\log n}$ strictly dominates any constant threshold in terms of aggregate decision risk.
Our results suggest that fixed significance thresholds may not optimally balance aggregate decision risk when information accumulates with sample size.

Second, the $(\rhotail,\kappa)$ template provides a principled basis for sample-size-dependent calibration in adaptive trial designs.
In group-sequential or information-monitoring frameworks, the spending function that allocates Type~I error across interim analyses can be informed by the MDP scaling: the optimal error rate at information level~$n$ is $\alpha_n\asymp n^{-\kappa/2}$, with $\kappa$ reflecting the effective dimensionality of the alternative space under consideration.

Third, the connection to model selection is suggestive.
When GOF testing is embedded in model comparison among $K$ families, a heuristic application of the $(\rhotail,\kappa)$ template with the Schwarz-type Occam factor \citep{Schwarz1978} suggests a penalty of the form $(d/2)\log n+\log K$, paralleling the Bayesian information criterion; a rigorous derivation requires verifying the regularity conditions of Theorem~\ref{thm:mdp-generic} in the model-selection context and is left to future work.
Similarly, for simultaneous GOF tests across $J=J(n)$ hypotheses, a Bonferroni-type adjustment suggests the threshold inflates to $t_n\approx\sqrt{a^{\star}\log(nJ)}$, though the formal justification requires extending the risk decomposition to the multiple-testing setting.

Fourth, in high-dimensional screening where the number of parameters grows with~$n$, the effective exponent~$\kappa$ becomes dimension-dependent.
The template accommodates this by treating $\kappa=\kappa(d)$ as a function of the growing parameter dimension, though explicit rates require case-by-case analysis of the prior concentration behaviour.

As a concrete illustration, Table~\ref{tab:constants} translates the $(\rhotail,\kappa)$ pairs into explicit threshold constants: for a KS test at $n=10{,}000$, the Bayes-optimal significance level is approximately $\alpha_n\approx n^{-1}\approx 10^{-4}$, far below the conventional $0.05$ yet far above the exponentially small levels associated with a large-deviation threshold.
For a $\chi^2$ test with $k=10$ categories, the optimal exponent $a^{\star}=9$ yields $\alpha_n\approx n^{-4.5}$, reflecting the higher effective dimensionality of the alternative space.
These values provide ready-to-use calibration guidance for practitioners willing to adopt sample-size-dependent thresholds.
These remarks are normative within the Bayes-risk objective and do not preclude fixed-level frequentist calibration for alternative inferential goals.

\subsection{Directions for Further Research}

Several extensions merit investigation.
First, the present analysis assumes independent observations; extending the $(\rhotail,\kappa)$ template to dependent data, including mixing processes, time series, and spatial models, requires replacing the Brownian-bridge tail with the appropriate weak-convergence limit and verifying that the prior mass condition adapts to the effective sample size.

Second, high-dimensional settings in which the parameter dimension~$d$ grows with~$n$ present both opportunities and challenges.
When $\kappa=\kappa(d)$ depends on a diverging dimension, the MDP threshold may transition from $\sqrt{\log n}$ to a different scaling; characterising this transition and its implications for Bayes-risk optimality is an open problem.

Third, sequential and online testing frameworks, where data arrive continuously and decisions must be made in real time, call for an MDP analogue of the sequential probability ratio test.
The connection between MDP-optimal calibration and e-value or safe testing \citep{DPSZ2026, PSZ2026evalues} suggests a natural bridge, though the formal development of the GOF-specific sequential theory remains to be carried out.

Finally, extending the $(\rhotail,\kappa)$ template to minimax or $\Gamma$-minimax formulations, where the prior is replaced by a least-favourable distribution over a class of priors, remains an open question, as does the behaviour of the MDP scaling under alternative loss functions or infinite-dimensional nuisance parameters.

\medskip

The moderate deviation boundary thus serves as a bridge between classical large-deviation theory and contemporary Bayesian risk calibration.
Within the class of priors and statistics satisfying the sub-Gaussian tail and polynomial local mass conditions, the $(\rhotail,\kappa)$ template reveals that the MDP scale is the unique locus at which the geometry of evidence and the structure of prior uncertainty are in balance.

\bibliographystyle{plainnat}

\appendix

\section{Proofs of Technical Results}\label{app:proofs}

\subsection{Proof of Theorem~\ref{thm:ks-mdp} (KS Bayes-optimal threshold)}\label{app:proof-ks-mdp}

We provide a careful proof that makes explicit where the $\sqrt{\log n}$ threshold and exponent enter.

\begin{proof}
Consider tests $\delta_{n,t}(x^n)=\1\{\sqrt{n}S_n>t\}$ with threshold $t=t_n$.
Write the Bayes risk \eqref{eq:bayes-risk} as
\begin{equation}\label{eq:BR-decomp}
B_n(t)=\pi_0 L_0\underbrace{\PP_{0}(\sqrt{n}S_n>t)}_{\alpha_n(t)}
+
\pi_1 L_1\underbrace{\E_{\Pi_1}\!\left[\PP_{F}(\sqrt{n}S_n\le t)\right]}_{\bar\beta_n(t)}.
\end{equation}
Since $\pi_0 L_0$ and $\pi_1 L_1$ are positive constants, they do not affect the optimiser $a^*$; we absorb them into the $\asymp$ notation below.

\paragraph{Step 1: Type I tail on the Kolmogorov scale.}
By the Brownian-bridge supremum asymptotics, for $t=\sqrt{a\log n}$,
\begin{equation}\label{eq:typeI-mdp}
\alpha_n(\sqrt{a\log n}) \asymp n^{-2a}.
\end{equation}

\paragraph{Step 2: Uniform Type II bound outside the critical KS ball.}
Fix $F\in\calF_1$ with $\varepsilon=d_{KS}(F,F_0)>0$.
Since $S_n \ge \varepsilon - \sup_x|F_n(x)-F(x)|$, whenever $\varepsilon\ge 2t/\sqrt{n}$, the DKW inequality gives
\begin{equation}\label{eq:typeII-exp}
\PP_F(\sqrt{n}S_n\le t) \le 2e^{-n\varepsilon^2/2}.
\end{equation}

\paragraph{Step 3: Prior integration and the critical neighbourhood.}
Define $\rho_n=t/\sqrt{n}$ and split by KS-distance:
\[
\bar\beta_n(t)
=
\int_{d_{KS}\le 2\rho_n}\PP_F(\sqrt{n}S_n\le t)\,\Pi_1(\diff F)
+
\int_{d_{KS}>2\rho_n}\PP_F(\sqrt{n}S_n\le t)\,\Pi_1(\diff F).
\]
The first integral is bounded by $\Pi_1(d_{KS}\le 2\rho_n) \asymp \rho_n^{\kappa}$ (Assumption~\ref{ass:prior-local}).
The second integral is $o(\rho_n^{\kappa})$ by \eqref{eq:typeII-exp}.
For $t=\sqrt{a\log n}$:
\begin{equation}\label{eq:beta-mdp2}
\bar\beta_n(\sqrt{a\log n}) \asymp (a\log n/n)^{\kappa/2}.
\end{equation}

\paragraph{Step 4: Optimisation and the polylogarithmic refinement.}
The refined Type~II analysis (Rubin--Sethuraman) replaces the crude bound with a Laplace-method evaluation over a thin KS shell, producing an additional $t$ factor:
$\bar\beta_n(t) \asymp t\,(t/\sqrt{n})^{\kappa} = t^{\kappa+1}/n^{\kappa/2}$.
With $t=\sqrt{a\log n}$, the risk becomes
$B_n(\sqrt{a\log n}) \asymp n^{-2a} + a^{(\kappa+1)/2}\,(\log n)^{(\kappa+1)/2}\, n^{-\kappa/2}$.
At leading order in $n$, setting $n^{-2a} = n^{-\kappa/2}$ gives $a^{*} = \kappa/4$.
The factor $a^{(\kappa+1)/2}$ contributes only at polylogarithmic order:
\begin{equation}\label{eq:t-star}
t_n^* = \sqrt{\frac{\kappa}{4}\log n} + O(\sqrt{\log\log n}).
\end{equation}
\end{proof}

\subsection{Proof of Proposition~\ref{prop:half-space-rate}}\label{app:proof-halfspace}

\begin{proof}
\noindent\textbf{Step 1: Dual inequality.}
Fix $t\ge 0$ and any $G\ll F_0$. Using the tilt decomposition,
\[
\KL(G\|F_0) = \KL(G\|F_t) + t\int \phi\,dG - \Lambda(t).
\]
Since $\KL(G\|F_t)\ge 0$ and $\int \phi\,dG\ge 0$ for $G\in H$ with $t\ge 0$:
$\KL(G\|F_0) \ge -\Lambda(t)$.
Taking inf over $G\in H$ and sup over $t\ge 0$ gives the lower bound.

\noindent\textbf{Step 2: Achievability.}
If there exists $t^*>0$ with $\Lambda'(t^*)=0$, take $G^*=F_{t^*}$.
Since $\int \phi\,dF_{t^*}=\Lambda'(t^*)=0$, we have $G^*\in H$ and
$\KL(F_{t^*}\|F_0) = 0 + t^*\cdot 0 - \Lambda(t^*) = -\Lambda(t^*)$.
If no interior optimiser exists, a sequence $t_k\uparrow$ with $-\Lambda(t_k)\uparrow \sup_{t\ge 0}\{-\Lambda(t)\}$ and $F_{t_k}\in H$ eventually yields the reverse inequality by lower semicontinuity.
\end{proof}

\section{Triangulation of Evidence Measures}
\label{app:triangulation}

This appendix establishes that four classical measures of evidence, namely Bayes factors, likelihood ratios, entropy deficits, and large-deviation rates, coincide locally through KL curvature, providing the geometric foundation for the moderate-deviation calibration developed in the main text.

\subsection{Multinomial Setup}\label{app:tri-setup}

Let $X_1,\ldots,X_n$ be i.i.d.\ categorical with $k$ categories.
Write the null hypothesis probability vector as $\theta_0=(\theta_{01},\ldots,\theta_{0k})$ with $\theta_{0j}>0$ and $\sum_j\theta_{0j}=1$.
The empirical distribution is $\hat{p}=(n_1/n,\ldots,n_k/n)$, where $n_j=\#\{i:X_i=j\}$.
The Kullback--Leibler divergence from $\theta_0$ to $\hat{p}$ is
\begin{equation}\label{eq:tri-kl}
D(\hat{p}\|\theta_0) \;=\; \sum_{j=1}^{k} \hat{p}_j \log\frac{\hat{p}_j}{\theta_{0j}}.
\end{equation}

\subsection{\texorpdfstring{Good $\to$ Bayes Factor $\to$ KL $+\; \log n$}{Good to Bayes Factor to KL + log n}}\label{app:tri-good}

\citet{Good1955} evaluated the weight of evidence as the logarithm of the Bayes factor.
A Laplace approximation to the marginal likelihood under a Dirichlet prior on the alternative simplex yields
\begin{equation}\label{eq:tri-good}
W \;=\; n\,D(\hat{p}\|\theta_0) \;+\; \frac{k-1}{2}\,\log n \;+\; O(1).
\end{equation}
The first term is the leading exponential evidence; the second is the Occam--complexity correction from integrating over the $(k{-}1)$-dimensional simplex.

\subsection{\texorpdfstring{Hoeffding $\to$ Sanov $\to$ KL Rate}{Hoeffding to Sanov to KL Rate}}\label{app:tri-hoeffding}

\citet{Hoeffding1965} showed that the probability of a type class under the null decays at the Sanov rate:
\begin{equation}\label{eq:tri-sanov}
\frac{1}{n}\,\log \PP_{\theta_0}(\hat{p}\in A) \;\to\; -\inf_{Q\in A}\,D(Q\|\theta_0).
\end{equation}
For the multinomial, the likelihood ratio statistic is $\Lambda_n = 2n\,D(\hat{p}\|\theta_0)$ exactly, so the exponential rate of the likelihood ratio is governed by the same KL divergence.

\subsection{\texorpdfstring{Jaynes $\to$ Entropy Deficit}{Jaynes to Entropy Deficit}}\label{app:tri-jaynes}

\citet{Jaynes1957} characterised the null $\theta_0$ as the maximum-entropy distribution subject to the relevant moment constraints.
The entropy deficit from the null to the empirical distribution satisfies the exact identity
\begin{equation}\label{eq:tri-entropy}
H^{*}-H(\hat{p}) \;=\; D(\hat{p}\|\theta_0) \;+\; \sum_{j}(\hat{p}_j - \theta_{0j})\log\theta_{0j},
\end{equation}
where $H^{*}=-\sum_j\theta_{0j}\log\theta_{0j}$ is the null entropy and $H(\hat{p})=-\sum_j\hat{p}_j\log\hat{p}_j$.
For a uniform null ($\theta_{0j}=1/k$), the cross-entropy correction vanishes since $\sum_j(\hat{p}_j-\theta_{0j})=0$, giving $\Delta H = D(\hat{p}\|\theta_0)$ exactly.
In the general case, $2n\,\Delta H = 2n\,D(\hat{p}\|\theta_0) + 2n\sum_j(\hat{p}_j-\theta_{0j})\log\theta_{0j}$, where the correction term is $O_{\PP}(\sqrt{n})$.

\subsection{\texorpdfstring{Quadratic Closure $\to$ $\chi^2$}{Quadratic Closure to chi-squared}}\label{app:tri-chi2}

Expanding the KL divergence to second order around $\theta_0$ gives the Wilks approximation:
\begin{equation}\label{eq:tri-wilks}
2n\,D(\hat{p}\|\theta_0) \;=\; \sum_{j=1}^{k} \frac{(n_j - n\theta_{0j})^{2}}{n\theta_{0j}} \;+\; o_{\PP}(1) \;\to\; \chi^{2}_{k-1}.
\end{equation}
Combining the three leading-order representations yields the quadratic closure identity.
For the uniform null ($\theta_{0j}=1/k$), where $\Delta H = D(\hat{p}\|\theta_0)$ exactly:
\begin{equation}\label{eq:tri-identity}
\boxed{\;\Lambda_n \;=\; 2n\,\Delta H \;=\; 2n\,D(\hat{p}\|\theta_0) \;\sim\; \chi^{2}_{k-1}.\;}
\end{equation}
For non-uniform $\theta_0$, the entropy deficit includes a cross-entropy correction: $2n\,\Delta H = \Lambda_n + 2n\sum_j(\hat{p}_j-\theta_{0j})\log\theta_{0j}$.
Good's weight of evidence differs by the Occam factor: $2W = \Lambda_n + (k{-}1)\log n + O(1)$.

\begin{figure}[ht]
\centering
\begin{tikzpicture}[>=Stealth, font=\small,
    every node/.style={align=center},
    box/.style={draw, thin, rounded corners=2pt, inner sep=5pt, minimum width=2.2cm}]
  \node[box] (good) at (0,3.5) {Good (1955)\\{\footnotesize weight of evidence}\\{\footnotesize $W$}};
  \node[box, thick] (kl) at (0,1.2) {\textbf{KL divergence}\\{\footnotesize $D(\hat{p}\|\theta_0)$}};
  \node[box] (hoeffding) at (-4,-1.5) {Hoeffding (1965)\\{\footnotesize LDP rate}\\{\footnotesize $\Lambda_n = 2nD$}};
  \node[box] (jaynes) at (0,-1.5) {Jaynes (1957)\\{\footnotesize entropy deficit}\\{\footnotesize $2n\Delta H$}};
  \node[box] (wilks) at (4,-1.5) {Wilks\\{\footnotesize quadratic limit}\\{\footnotesize $\chi^{2}_{k-1}$}};
  \draw[->, thick] (good) -- (kl) node[midway, right, font=\footnotesize] {Laplace $+\;O(\log n)$};
  \draw[->, thick] (kl) -- (hoeffding) node[midway, above left, font=\footnotesize] {Sanov};
  \draw[->, thick] (kl) -- (jaynes) node[midway, right, font=\footnotesize] {max-ent};
  \draw[->, thick] (kl) -- (wilks) node[midway, above right, font=\footnotesize] {Taylor};
  \node[font=\footnotesize\itshape] at (0,-3.0) {All roads meet at KL curvature.};
\end{tikzpicture}
\caption{Triangulation of evidence measures.  Good's weight of evidence connects to the KL core through a Laplace approximation with an $O(\log n)$ Occam correction; the remaining three frameworks converge exactly through Kullback--Leibler curvature.}\label{fig:triangulation}
\end{figure}
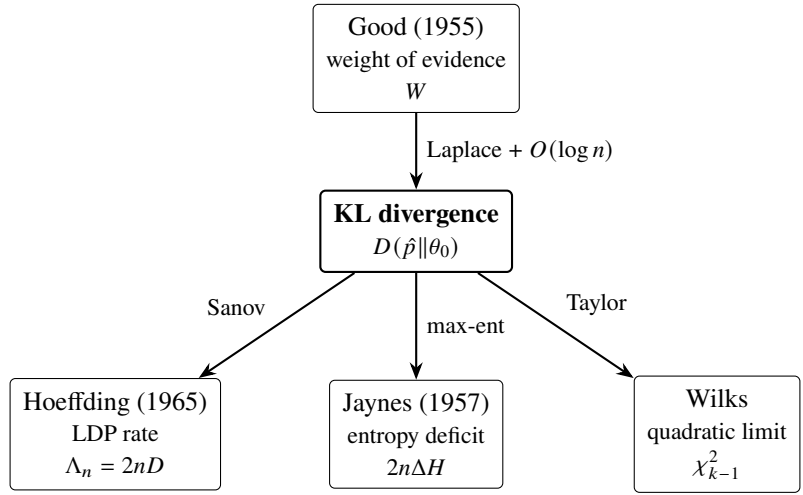

\subsection{Bridge to the Main Text}\label{app:tri-bridge}

This triangulation reveals that Bayes factors, likelihood ratios, entropy deficits, and large-deviation rates are not alternative measures of evidence but different coordinate representations of a single geometric object: the KL curvature at the null.
The moderate-deviation calibration developed in the main text therefore operates within a unified geometric framework linking Bayesian, frequentist, and information-theoretic representations.

\section{Computational Methods}\label{app:computation}

\subsection{Monte Carlo Estimation of Bayes Risk}

For GOF settings where analytic calibration is unavailable, Bayes risk can be estimated by Monte Carlo.
Draw $F^{(1)},\dots,F^{(M)}\sim \Pi_1$ and, for each $m$, simulate $X^{(m)}\sim (F^{(m)})^{\otimes n}$ and compute $T_n^{(m)}$.
The Type~II term at threshold $t$ is estimated by $\widehat{\beta}_n(t)=M^{-1}\sum_{m=1}^M \1\{T_n^{(m)}\le t\}$.
The Type~I term may be estimated by simulation under $F_0$ or via a known tail approximation; importance-sampling methods \citep{Tokdar2010} can reduce variance substantially.
Finally, minimise $\widehat{B}_n(t)=\pi_0L_0\widehat{\alpha}_n(t)+\pi_1L_1\widehat{\beta}_n(t)$ over a grid of $t$.
This Monte Carlo illustration is solely a sanity check; all reported thresholds in Section~\ref{sec:numerics} are deterministic evaluations of the analytic risk expression.

\subsection{Estimating the Local Prior Exponent}

If $\Pi_1$ is specified implicitly, probe $\Pi_1$ near the null by Monte Carlo:
estimate $p_j=\Pi_1(d(F,F_0)\le \varepsilon_j)$ for shrinking radii $\varepsilon_j$ and regress
$\log p_j \approx \kappa\log\varepsilon_j + \text{const}$.

\subsection{Asymptotic Plug-in Approximation}

In the MDP regime with null tail $\PP(T_\infty>t)\asymp e^{-2\rhotail\, t^2}$, a convenient approximation is
\begin{equation}\label{eq:asymp-plugin}
t_n^* \approx \sqrt{\frac{\hat{\kappa}}{4\rhotail}\,\log n},
\end{equation}
where $\hat{\kappa}$ is the estimated local prior exponent and $\rhotail$ is the tail-rate parameter (e.g., $\rhotail=1$ for the KS Brownian-bridge tail).

\section{Extended Numerical Tables}\label{app:numerical}

This appendix supplements the numerical verification of Section~\ref{sec:numerics} with detailed threshold tables for each main theorem.

\subsection{KS Thresholds (Theorem~\ref{thm:ks-mdp})}

With $\rhotail=1$ (Brownian-bridge tail $\sim 2e^{-2t^2}$) and varying $\kappa$, Table~\ref{tab:ks} compares the Bayes-optimal MDP threshold $t_n^{*}=\sqrt{\kappa\log n/4}$ against the constant fixed-$\alpha$ critical value $K^{-1}(0.95)\approx 1.358$.

\begin{table}[!ht]
\centering
\caption{KS Bayes-optimal thresholds ($\rhotail=1$). The MDP threshold grows with $n$; the fixed-$\alpha$ threshold does not.}\label{tab:ks}
\smallskip
\begin{tabular}{rrcccc}
\toprule
$\kappa$ & $n$ & $t_n^{*}$ (MDP) & $t$ (fixed $\alpha$) & $\alpha_n^{*}$ & $B_n^{*}$ \\
\midrule
1 & 100       & 1.073 & 1.358 & $1.0\times 10^{-1}$ & $2.1\times 10^{-1}$ \\
1 & 1\,000    & 1.314 & 1.358 & $3.2\times 10^{-2}$ & $8.3\times 10^{-2}$ \\
1 & 10\,000   & 1.517 & 1.358 & $1.0\times 10^{-2}$ & $3.0\times 10^{-2}$ \\
1 & $10^6$    & 1.858 & 1.358 & $1.0\times 10^{-3}$ & $3.7\times 10^{-3}$ \\
\midrule
2 & 100       & 1.517 & 1.358 & $1.0\times 10^{-2}$ & $4.6\times 10^{-2}$ \\
2 & 1\,000    & 1.858 & 1.358 & $1.0\times 10^{-3}$ & $6.9\times 10^{-3}$ \\
2 & 10\,000   & 2.146 & 1.358 & $1.0\times 10^{-4}$ & $9.2\times 10^{-4}$ \\
2 & $10^6$    & 2.628 & 1.358 & $1.0\times 10^{-6}$ & $1.4\times 10^{-5}$ \\
\midrule
5 & 100       & 2.399 & 1.358 & $1.0\times 10^{-5}$ & $4.6\times 10^{-4}$ \\
5 & 1\,000    & 2.938 & 1.358 & $3.2\times 10^{-8}$ & $4.0\times 10^{-6}$ \\
5 & 10\,000   & 3.393 & 1.358 & $1.0\times 10^{-10}$ & $2.6\times 10^{-8}$ \\
5 & $10^6$    & 4.156 & 1.358 & $1.0\times 10^{-15}$ & $7.1\times 10^{-13}$ \\
\midrule
10 & 100      & 3.393 & 1.358 & $1.0\times 10^{-10}$ & $2.1\times 10^{-7}$ \\
10 & 1\,000   & 4.156 & 1.358 & $1.0\times 10^{-15}$ & $1.6\times 10^{-11}$ \\
10 & 10\,000  & 4.799 & 1.358 & $1.0\times 10^{-20}$ & $6.6\times 10^{-16}$ \\
10 & $10^6$   & 5.877 & 1.358 & $1.0\times 10^{-30}$ & $5.0\times 10^{-25}$ \\
\bottomrule
\end{tabular}
\end{table}

For $\kappa\ge 5$ the MDP threshold already exceeds the fixed-$\alpha$ value at $n=100$; as $n$ grows, the gap widens steadily. The Bayes risk $B_n^{*}\asymp(\log n/n)^{\kappa/2}$ decays polynomially.

\subsection{\texorpdfstring{Multinomial $\chi^2$ Thresholds (Theorem~\ref{thm:chi-squared-threshold})}{Multinomial chi-squared Thresholds}}

With $\rhotail=1/4$ and $\kappa = k-1$ (Dirichlet prior), Table~\ref{tab:chisq} compares the MDP critical value $\chi^{2*}_n = (k-1)\log n$ against the conventional $\chi^2_{k-1}(0.95)$ quantile.

\begin{table}[!ht]
\centering
\caption{Multinomial $\chi^2$ Bayes-optimal thresholds ($\rhotail=1/4$).}\label{tab:chisq}
\smallskip
\begin{tabular}{rrrccr}
\toprule
$k$ & $\kappa$ & $n$ & $\chi^{2*}_n$ (MDP) & $\chi^2_{\kappa}(0.95)$ & $\alpha_n^{*}$ \\
\midrule
3 & 2 & 100     &  9.2 &  5.99 & $1.0\times 10^{-2}$ \\
3 & 2 & 1\,000  & 13.8 &  5.99 & $1.0\times 10^{-3}$ \\
3 & 2 & 10\,000 & 18.4 &  5.99 & $1.0\times 10^{-4}$ \\
\midrule
4 & 3 & 100     & 13.8 &  7.81 & $1.0\times 10^{-3}$ \\
4 & 3 & 1\,000  & 20.7 &  7.81 & $3.2\times 10^{-5}$ \\
4 & 3 & 10\,000 & 27.6 &  7.81 & $1.0\times 10^{-6}$ \\
\midrule
10 & 9 & 100     & 41.4 & 16.92 & $1.0\times 10^{-9}$ \\
10 & 9 & 1\,000  & 62.2 & 16.92 & $3.2\times 10^{-14}$ \\
10 & 9 & 10\,000 & 82.9 & 16.92 & $1.0\times 10^{-18}$ \\
\bottomrule
\end{tabular}
\end{table}

For $k=10$ categories at $n=10{,}000$, the Bayes-optimal critical value of $82.9$ is nearly $5\times$ the fixed-$\alpha$ value of $16.92$.
This reflects the multiplicity cost of servicing a $(k{-}1)$-dimensional alternative space under Bayes risk.

\subsection{Fisher Rejection Radii (Theorem~\ref{thm:fisher-mdp})}

With $\rhotail=1/4$ and $\kappa=\lambda+d$, Table~\ref{tab:fisher} shows the Fisher-distance rejection radius $r_n^{*}=\sqrt{(\lambda+d)\log n/n}$ for several configurations of prior exponent~$\lambda$ and parameter dimension~$d$.

\begin{table}[!ht]
\centering
\caption{Fisher-distance rejection radii ($\rhotail=1/4$, $\kappa=\lambda+d$).}\label{tab:fisher}
\smallskip
\begin{tabular}{rrrrcccc}
\toprule
$\lambda$ & $d$ & $\kappa$ & $n=100$ & $n=1{,}000$ & $n=10{,}000$ & $n=100{,}000$ \\
\midrule
1 & 1 & 2 & 0.3035 & 0.1175 & 0.0429 & 0.0152 \\
1 & 2 & 3 & 0.3717 & 0.1440 & 0.0526 & 0.0186 \\
2 & 3 & 5 & 0.4799 & 0.1858 & 0.0679 & 0.0240 \\
1 & 5 & 6 & 0.5257 & 0.2036 & 0.0743 & 0.0263 \\
\bottomrule
\end{tabular}
\end{table}

The radii shrink as $\sqrt{\log n/n}$, strictly slower than the $n^{-1/2}$ rate of CLT-scale procedures but strictly faster than the $O(1)$ resolution of LDP-scale tests.

\end{document}